\newcommand{\R}{{\mathbb{R}}}
\newcommand{\Z}{{\mathbb{Z}}}
\newcommand{\C}{\mathbb{C}}
\newcommand{\N}{\mathbb{N}}
\def\scrN{{\mathcal N}}
\def\vecd{{\text{\boldmath$d$}}}
\def\vecr{{\text{\boldmath$r$}}}
\def\vecx{{\text{\boldmath$x$}}}
\def\vecy{{\text{\boldmath$y$}}}
\def\vecz{{\text{\boldmath$z$}}}
\def\vecv{{\text{\boldmath$v$}}}
\def\vecm{{\text{\boldmath$m$}}}
\def\veck{{\text{\boldmath$k$}}}
\def\vece{{\text{\boldmath$e$}}}
\def\vecphi{{\text{\boldmath$\phi$}}}
\def\vec0{{\text{\boldmath$0$}}}
\newcommand{\tA}{\widetilde{A}}
\def\Re{\operatorname{Re}}
\def\B{\operatorname{B}}
\def\OO{\operatorname{O}}
\def\Prob{\operatorname{Prob}}
\def\Span{\operatorname{Span}}
\newcommand{\V}{\mathcal{V}}
\renewcommand{\mod}{\text{ mod }}
\newcommand{\tV}{\widetilde{\mathcal{V}}}
\newcommand{\col}{\: : \:}
\newcommand{\bn}{\mathbf{0}}
\newcommand{\E}{\mathbb{E}}
\newcommand{\oA}{\overline{A}}
\newcommand{\oF}{\overline{F}}
\newcommand{\ve}{\varepsilon}
\newcommand{\sfrac}[2]{{\textstyle \frac {#1}{#2}}}
\newcommand{\SL}{\mathrm{SL}}
\newcommand{\vol}{\mathrm{vol}}
\newtheorem{thm}{Theorem}[section]
\newtheorem{prop}[thm]{Proposition}
\newtheorem{lem}[thm]{Lemma}
\theoremstyle{remark}
\newtheorem{remark}[thm]{Remark} 
\numberwithin{equation}{section}
\begin{document}
\title[On the generalized circle problem for a random lattice]{On the generalized circle problem for\\ a random lattice in large dimension}

\author{Andreas Str\"ombergsson and Anders S\"odergren}
\address{Department of Mathematics, Box 480, Uppsala University, 751 06 Uppsala, Sweden\newline
\rule[0ex]{0ex}{0ex}\hspace{8pt} {\tt astrombe@math.uu.se}\newline
\newline
\rule[0ex]{0ex}{0ex} \hspace{8pt}Department of Mathematical Sciences, University of Copenhagen, Universitetsparken\newline
\rule[0ex]{0ex}{0ex}\hspace{8pt} 5, 2100 Copenhagen \O, Denmark\newline
\rule[0ex]{0ex}{0ex}\hspace{8pt} \textit{Present address}: School of Science and Technology, Örebro University, 701 82 Örebro,\newline
\rule[0ex]{0ex}{0ex}\hspace{8pt} Sweden\newline
\rule[0ex]{0ex}{0ex}\hspace{8pt} {\tt anders.sodergren@oru.se}} 

\date{\today}
\thanks{The first author is supported by a grant from the G\"oran Gustafsson Foundation for
Research in Natural Sciences and Medicine, and also by the Swedish Research Council Grant 621-2011-3629. The second author was partially supported by a postdoctoral fellowship from the Swedish Research Council, by the National Science Foundation under agreement No.\ DMS-1128155, as well as by a grant from the Danish Council for Independent Research and FP7 Marie Curie Actions-COFUND (grant id: DFF-1325-00058)}

\begin{abstract}
In this note we study the error term $R_{n,L}(x)$ in the generalized circle problem for a ball of volume $x$ and a random lattice $L$ of large dimension $n$. 
Our main result is the following functional central limit theorem: Fix an arbitrary function $f:\Z^+\to \R^+$ satisfying $\lim_{n\to\infty}f(n)=\infty$ and $f(n)=O_{\ve}(e^{\ve n})$ for every $\ve>0$. Then, the random function 
\begin{align*}
t\mapsto\frac{1}{\sqrt{2f(n)}}R_{n,L}\left(tf(n)\right)
\end{align*}
on the interval $[0,1]$ converges in distribution to one-dimensional Brownian motion as $n\to\infty$.
The proof goes via convergence of moments, and for the computations we develop a 
new version of Rogers' mean value formula from \cite{rogers1}.
For the individual $k$th moment of the variable $(2f(n))^{-1/2}R_{n,L}(f(n))$ we prove convergence to the 
corresponding Gaussian moment more generally for functions $f$ satisfying
$f(n)=O(e^{cn})$ for any fixed $c\in(0,c_k)$, where $c_k$ is a constant depending on $k$
whose optimal value we determine.
\end{abstract}

\maketitle

\section{Introduction}

Gauss' circle problem is a classical problem in number theory asking for the number of integer lattice points inside a Euclidean circle of radius $t$ centered at the origin. Gauss observed that this quantity equals the area $A(t)=\pi t^2$ enclosed by the circle up to an error term of size at most $O(t)$. Hardy conjectured \cite{hardy} that the error term can be improved to $O_{\ve}(t^{1/2+\ve})$; a bound which is known to be essentially optimal. Despite efforts of many mathematicians, Hardy's conjecture remains open and the best known bound is $O_{\ve}(t^{131/208+\ve})$ due to Huxley \cite{huxley}.

In this paper we will be interested in the circle problem generalized to dimension $n$ and a general $n$-dimensional lattice $L$ of covolume $1$. We denote the space of all such lattices by $X_n$ and recall that $X_n$ can be identified with the homogeneous space $\SL(n,\Z)\backslash \SL(n,\R)$ via the correspondence $\Z^ng\leftrightarrow\SL(n,\Z)g$. As a consequence of this identification, $X_n$ inherits a right $\SL(n,\R)$-invariant probability measure $\mu_n$ originating from a Haar measure on $\SL(n,\R)$. 

Given $n\geq2$, a lattice $L\in X_n$ and a real number $x\geq0$, we let $N_{n,L}(x)$ denote the number of non-zero lattice points of $L$ in the closed ball of volume $x$ centered at the origin in $\R^n$, i.e.\ we let
\begin{equation}\label{NnLdef}
N_{n,L}(x):=\#\bigg\{\vecm\in L\setminus\{\vec0\} : |\vecm|\leq\Big(\frac{x}{V_n}\Big)^{1/n}\bigg\},
\end{equation}  
where $V_n$ denotes the volume of the unit ball in $\R^n$. We also define, for $x\geq0$, the function 
\begin{equation*}
R_{n,L}(x):=N_{n,L}(x)-x,
\end{equation*}
and formulate, for a given $L\in X_n$, the generalized circle problem as the problem of giving the best possible upper bound on $R_{n,L}(x)$ as $x\to\infty$.

In a series of papers Bentkus and G\"otze \cite{BG1,BG2} and G\"otze \cite{gotze} proved strong explicit bounds on $R_{n,L}(x)$ for an arbitrary given lattice $L\in X_n$. In particular, G\"otze proved in \cite{gotze} that $|R_{n,L}(x)|=O(x^{1-2/n})$ holds for every $L\in X_n$ when $n\geq5$. This result is best possible for all rational lattices $L\in X_n$, while for irrational lattices G\"otze proved the stronger bound $R_{n,L}(x)=o(x^{1-2/n})$ as $x\to\infty$.\footnote{Here we call a lattice $L$ irrational if the Gram matrix for every $\Z$-basis of $L$ is not proportional to a matrix with integer entries only.} However, it turns out that for most lattices (in the measure sense) one can do much better. In fact, Schmidt \cite{schmidt} proved that for any $n\geq2$ and $\mu_n$-almost every $L\in X_n$ we have $R_{n,L}(x)=O_{\ve}(x^{1/2}(\log x)^{5/2+\ve})$. This upper bound should be compared to Landau's result $R_{n,L}(x)=\Omega(x^{1/2-1/(2n)})$ (cf.\ \cite{landau}). Hence, for large $n$, Schmidt's bound is close to optimal. In this vein it should also be noted that, for $n\geq3$, \footnote{Throughout the paper, 
$\mathbb E$ will denote the expected value with respect to the measure $\mu_n$ on $X_n$.}
\begin{equation}\label{variance} 
\text{Var}(R_{n,L}(x))=\mathbb E\left(R_{n,L}(x)^2\right):=\int_{X_n}R_{n,L}(x)^2\,d\mu_n(L)\asymp x
\end{equation}
(cf., e.g., \cite[p.\ 518]{schmidt} or \cite[Lemma 3.1]{epstein2}).

In a closely related direction, the second author has recently studied the distribution of lengths of lattice vectors in a $\mu_n$-random lattice of large dimension $n$. Given a lattice $L\in X_n$, we order its non-zero vectors by increasing lengths as $\pm\vecv_1,\pm\vecv_2,\pm\vecv_3,\ldots$ and define, for each $j\geq1$,
\begin{equation*}
\mathcal V_j(L):=V_n|\vecv_j|^n.
\end{equation*} 
We stress that the first few vectors in this list, that is, the shortest non-zero vectors in $L$, encode important geometric information attached to $L$. Indeed, these short vectors play a crucial role in, for example, the lattice sphere packing problem where the quantity $2^{-n}\sup_{L\in X_n}\mathcal V_1(L)$ determines the maximal density of a lattice sphere packing in $\R^n$. In \cite{poisson}, by calculating the limits as $n\to\infty$ of mixed moments of the form
\begin{equation}\label{EXAMPLESOFMOMENTS}
\mathbb E\bigg(\prod_{j=1}^{k}N_{n,L}(x_j)\bigg)
\end{equation}
for any fixed $k\geq1$ and $0<x_1\leq x_2\leq\ldots\leq x_k$, 
the following theorem is established:

\begin{thm}[S\"odergren]\label{POISSONTHEOREM}
The sequence $\{\mathcal V_j(\cdot)\}_{j=1}^{\infty}$ converges in distribution, as $n\to\infty$, to the sequence $\{ T_j\}_{j=1}^{\infty}$, where $0<T_1<T_2<T_3<\cdots$ denote the points of a Poisson process $\mathcal P=\big\{\mathcal N(x) , x\geq0\big\}$ on $\R^+$ with constant intensity $\frac{1}{2}$.  
\end{thm}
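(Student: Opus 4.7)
The plan is to prove convergence in distribution of the point process $\{\mathcal V_j(L)\}_{j\ge 1}$ to the Poisson process $\mathcal P$ by the method of moments. The key observation is that knowing the sequence $\{\mathcal V_j(L)\}$ is equivalent to knowing the counting function $N_{n,L}$, since $N_{n,L}(x)=2\cdot\#\{j:\mathcal V_j(L)\le x\}$. Thus it is enough to show that for every fixed $k\ge 1$ and every choice $0<x_1\le x_2\le\ldots\le x_k$, the mixed moments $\mathbb E\bigl(\prod_{j=1}^k N_{n,L}(x_j)\bigr)$ converge as $n\to\infty$ to the corresponding mixed moments $\mathbb E\bigl(\prod_{j=1}^k 2\mathcal N(x_j)\bigr)$ of the limit process. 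Since a Poisson process is determined by its joint moments, and since the $\mathcal V_j$'s are measurable functionals of $N_{n,L}$, convergence of all such mixed moments forces joint convergence of the ordered points $\mathcal V_j(L)\to T_j$.

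The main tool for computing these moments is Rogers' mean value formula from \cite{rogers1}, which expresses
\[
\int_{X_n}\sum_{\vecm_1,\ldots,\vecm_k\in L\setminus\{\vec0\}} F(\vecm_1,\ldots,\vecm_k)\,d\mu_n(L)
\]
as a finite sum of Euclidean integrals over $(\R^n)^r$ indexed by combinatorial data encoding the possible $\pm$-identifications and nontrivial linear dependencies among the $\vecm_j$. Specializing $F$ to the product of indicator functions of the balls of volumes $x_1,\ldots,x_k$ reduces $\mathbb E\bigl(\prod_j N_{n,L}(x_j)\bigr)$ to an explicit finite expression. The \emph{generic} contribution, corresponding to the case in which the $\vecm_j$ group into $\pm$-pairs subject to no further linear relation, evaluates to a polynomial in the $x_j$'s which, by a direct combinatorial check using the Poisson process moment formula, matches $\mathbb E\bigl(\prod_j 2\mathcal N(x_j)\bigr)$ exactly.

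The main obstacle is to show that every \emph{non-generic} term in Rogers' expansion — those attached to proper sublattice configurations, coincidences such as $\vecm_i=\pm\vecm_j$ for three or more indices, or any other nontrivial linear dependence — is of lower order as $n\to\infty$. This rests on the geometric fact that in high dimension the Euclidean integrals attached to constrained configurations gain suppression factors that decay rapidly in $n$ relative to the main term: ratios of ball volumes coming from Stirling-type estimates of $V_n$, zeta-like arithmetic sums $\sum_{d\ge 2}d^{-n}$ that arise from Rogers' matrix indices, and integrals restricted to low-dimensional linear subspaces. Assembling these bounds uniformly across all combinatorial types in Rogers' formula, and keeping careful track of the constants appearing in \cite{rogers1} so that the generic term indeed dominates in the limit, is the technical heart of the argument; once this is done, the surviving generic term gives precisely the Poisson mixed moment and the method-of-moments argument concludes.
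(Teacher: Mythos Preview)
Your proposal is essentially correct and matches the approach that the paper attributes to \cite{poisson}: the paper does not actually prove Theorem~\ref{POISSONTHEOREM} itself but quotes it as an earlier result, noting explicitly that it was established ``by calculating the limits as $n\to\infty$ of mixed moments of the form $\mathbb E\bigl(\prod_{j=1}^k N_{n,L}(x_j)\bigr)$'' via Rogers' formula. So your outline---compute mixed moments with Rogers' formula, identify the main terms coming from $q=1$ matrices with entries in $\{0,\pm1\}$ and one nonzero entry per column (i.e.\ partitions of $\{1,\ldots,k\}$ recording which $\vecm_j$ coincide up to sign), verify these yield the Poisson process moments, and bound the remaining terms---is exactly the method indicated.

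One small correction: Rogers' formula \eqref{rogformula} is an \emph{infinite} sum (over $q\ge1$ and over all admissible matrices $D$), not a finite one. The point is that for bounded compactly supported test functions the sum is absolutely convergent, and the tail terms decay like $q^{-n}$ and $\Delta^{-n}$ (where $\Delta$ is the maximal matrix entry), so the error analysis you describe amounts to controlling an infinite but rapidly convergent series; see \cite[Sect.~9]{rogers2}, \cite[Sect.~4]{rogers3}, and \cite[Sect.~3]{poisson} for the precise bounds. Also, your phrase ``group into $\pm$-pairs'' slightly undersells the combinatorics: the main terms are indexed by \emph{all} set partitions of $\{1,\ldots,k\}$ (blocks of any size $\ge1$), not just pairings, and the match with the Poisson moments uses \cite[Lemma~3]{poisson}.
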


The convergence in Theorem \ref{POISSONTHEOREM} is equivalent to the convergence of all finite dimensional distributions,
i.e.\ to the fact that the truncated sequence $\{\mathcal V_j(\cdot)\}_{j=1}^{N}$ converges in distribution
to the corresponding truncated sequence $\{ T_j\}_{j=1}^{N}$, for every fixed $N\in\Z^+$.
This raises the question whether 
it is possible to allow for more flexibility in Theorem \ref{POISSONTHEOREM} in the sense of allowing $N=N(n)$ to grow as a function of the dimension $n$? It seems reasonable to expect that for moderately growing $N$ the Possion characteristic of the limit sequence should remain intact, but that the Poissonian behavior will eventually disappear as $N$ is allowed to grow faster. 
A first result in this direction, indicating a Poissonian behavior for
$N\leq cn$ where $c>0$ is a small absolute constant,
is proved in a recent paper by Kim \cite{kim2016} using a sieving argument
(cf.\ also \cite{kim} where the range $N\leq(n/2)^{1/2-\ve}$ was obtained).
The following result extends this range, giving an indication of Poissonian behavior for
any $N$ growing \textit{sub-exponentially} with respect to $n$.


\begin{thm}\label{Poissoncor}
Let $f:\Z^+\to\R^+$ be any function satisfying $\lim_{n\to\infty}f(n)=\infty$ and $f(n)=O_{\ve}(e^{\ve n})$ for every $\ve>0$.
Let $\scrN(x)$ be a Poisson distributed random variable with expectation $x/2$.
Then
\begin{align}\label{PoissoncorRES}
\Prob_{\mu_n}(N_{n,L}(x)\leq 2N)-\Prob(\scrN(x)\leq N)\to 0
\quad\text{as }\: n\to\infty,
\end{align}
uniformly with respect to all $N,x\geq0$ satisfying $\min(x,N)\leq f(n)$.
\end{thm}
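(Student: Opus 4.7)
Rewrite $\Prob_{\mu_n}(N_{n,L}(x)\leq 2N)=\Prob_{\mu_n}(\mathcal V_{N+1}(L)>x)$ and $\Prob(\scrN(x)\leq N)=\Prob(T_{N+1}>x)$, so the claim becomes a uniform comparison of the tails of $\mathcal V_{N+1}(L)$ and $T_{N+1}$. The plan is to partition the admissible region $\{\min(x,N)\leq f(n)\}$ into three regimes according to the size of $x$ and handle each with different tools.

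In the ``small $x$'' regime $x\leq M(n)$, where $M(n)\to\infty$ is chosen slowly: for each fixed $N$, Theorem \ref{POISSONTHEOREM} gives $\Prob_{\mu_n}(\mathcal V_{N+1}(L)>x)\to\Prob(T_{N+1}>x)$, and since both sides are monotone in $x$ with continuous limit (a Gamma distribution tail), Dini's theorem upgrades this to uniform convergence on $[0,M(n)]$. Uniformity in $N$ is then achieved by truncating at $N\leq K(n)$ with $K(n)/M(n)\to\infty$: for $N>K(n)$, Markov's inequality together with $\mathbb E[N_{n,L}(x)]=x$ (Siegel) and $\mathbb E[\scrN(x)]=x/2$ bounds both tail probabilities by $O(M/K)=o(1)$.

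In the ``moderate $x$'' regime $M(n)<x\leq 4f(n)$, $x$ is still sub-exponential and both distributions should be close to the common Gaussian with mean $x$ and variance $2x$. For the Poisson side this is the classical Berry--Esseen bound $\sup_N|\Prob(\scrN(x)\leq N)-\Phi((2N-x)/\sqrt{2x})|=O(x^{-1/2})=o(1)$. For the lattice side the analogous Kolmogorov-distance statement follows from the moment computations the paper develops via its new Rogers-type mean value formula: all moments of $R_{n,L}(x)/\sqrt{2x}$ converge to the Gaussian moments in this range, which upgrades to CDF convergence since $N(0,1)$ is determined by its moments. Combining via the triangle inequality gives the uniform bound. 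Finally, in the ``large $x$'' regime $x>4f(n)$, the min condition forces $N\leq f(n)\leq x/4$, so both probabilities are deep left tails: Chernoff on Poisson gives $\Prob(\scrN(x)\leq x/4)\leq e^{-cx}$, while Chebyshev using $\text{Var}(R_{n,L}(x))\asymp x$ gives $\Prob_{\mu_n}(N_{n,L}(x)\leq x/2)=O(1/x)$; both vanish as $x>4f(n)\to\infty$.

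The main obstacle is the moderate regime: pointwise moment convergence at each fixed $x$ only gives distributional convergence there, so one must extract quantitative error estimates from the Rogers-type formula that remain valid uniformly as $x$ varies over the growing interval $[M(n),4f(n)]$. Some additional care is required around the crossover $x\sim M(n)$, where the small-$x$ Poisson picture and the moderate-$x$ Gaussian picture must be glued together consistently.
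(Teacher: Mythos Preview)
Your overall strategy---splitting by the size of $x$ into small, moderate, and large regimes and handling each with appropriate tools---is sound and matches the paper's. The differences are in execution, and the paper's version is considerably cleaner on two points.

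First, in the moderate regime you correctly identify the obstacle: moment convergence for each individual choice of $x=x(n)$ yields CDF convergence for that choice, but one needs uniformity over the whole range $[M(n),4f(n)]$. You propose to overcome this by extracting quantitative error terms from the Rogers-type formula. That would work, but the paper avoids it entirely with a subsequence argument: if uniformity failed, one could extract sequences $n_j\to\infty$, $x_j\in[x_0,f(n_j)]$ (so $x_j\to\infty$ and $x_j$ is sub-exponential), and thresholds $r_j$ along which the CDF error stays bounded below; then applying Theorem~\ref{FIRSTMAINTHEOREM} to the function $f_1$ defined by $f_1(n_j)=x_j$ (and $f_1=f$ elsewhere) gives an immediate contradiction. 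This one-line trick turns the pointwise CLT into a uniform statement with no extra analytic work, and it is the main simplification you are missing.

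Second, your small-$x$ regime is more elaborate than needed. The paper simply fixes a constant threshold $x_0$ (chosen after $\varepsilon$) and uses Theorem~\ref{POISSONTHEOREM} on $[0,x_0]$; there is no need for $M(n)\to\infty$, a Dini/P\'olya argument, or a separate Markov truncation in $N$, because once $x\leq x_0$ the Poisson theorem already gives the needed uniformity in $N$ for that bounded $x$-range. Your large-$x$ regime via Chebyshev and the variance bound is fine; the paper instead uses monotonicity to reduce to $x=4f(n)$, $N=f(n)$ and then applies the already-established convergence at that point---slightly slicker but equivalent in spirit.
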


We will deduce Theorem \ref{Poissoncor} from Theorem \ref{POISSONTHEOREM} combined with the following
result, 
a central limit theorem for the normalized error term in the generalized circle problem
for a random lattice $L$.

\begin{thm}\label{FIRSTMAINTHEOREM}
Let $f:\Z^+\to \R^+$ be any function satisfying $\lim_{n\to\infty}f(n)=\infty$ and $f(n)=O_{\ve}(e^{\ve n})$ for every $\ve>0$.
Let $Z_{n}^{(\B)}$ be the random variable
\begin{align}\label{FIRSTMAINTHEOREMdef}
Z_{n}^{(\B)}:=\frac{1}{\sqrt{2f(n)}}R_{n,L}(f(n)),
\end{align}
with $L$ picked at random in $(X_n,\mu_n)$.
Then
\begin{equation*}
Z_{n}^{(\B)}\xrightarrow[]{\textup{ d }}N(0,1)\qquad\text{as }\: n\to\infty.
\end{equation*}  
\end{thm}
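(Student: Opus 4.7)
The plan is to prove the convergence in distribution via the method of moments. Since the standard normal is determined by its moments, it suffices to show that for every fixed $k\in\Z^+$,
\begin{equation*}
\mathbb{E}\bigl((Z_n^{(\B)})^k\bigr)\longrightarrow \mu_k\qquad\text{as }\: n\to\infty,
\end{equation*}
where $\mu_{2m}=(2m-1)!!$ and $\mu_{2m+1}=0$; the conclusion then follows from the Fr\'echet--Shohat moment convergence theorem. Writing $R_{n,L}(x)=\sum_{\vecm\in L\setminus\{\vec0\}}\chi_x(\vecm)-x$ with $\chi_x$ the indicator of the ball of volume $x$ centred at the origin, the binomial theorem reduces the $k$th moment of $R_{n,L}(f(n))$ to the factorial moments $\mathbb{E}(N_{n,L}(x)^j)$ for $0\leq j\leq k$, each of which I would evaluate by Rogers' mean value formula. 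That formula expresses such an integral as a finite sum indexed by rational linear-dependence patterns among $\vecm_1,\ldots,\vecm_j$: the ``generic'' pattern (no dependencies) contributes $x^j$, while patterns forcing some pairs $\vecm_{i_1}=\pm\vecm_{i_2}$ give additional \emph{pairing} contributions, and patterns with more complicated relations contribute lower-order terms.

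For even $k=2m$ the anticipated Gaussian main term comes from the $(2m-1)!!$ complete pairings of $\{1,\ldots,2m\}$: each pair $\vecm_{i_1}=\pm\vecm_{i_2}$ contributes a factor $2x$ (the extra $2$ coming from the $\pm$ sign), so the pairings contribute $(2m-1)!!\,(2x)^m$ in total, which after normalization by $(2f(n))^m$ matches $\mu_{2m}$ exactly. The ``generic'' contribution $x^j$ appearing in each Rogers expansion is arranged to cancel precisely against the $-x$ shift coming from the binomial expansion of $(N_{n,L}(x)-x)^k$, so that any Rogers term which contains a singleton index drops out of the final answer. For odd $k$ no complete pairing exists and the same cancellations force the leading contribution to vanish.

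The main obstacle will be bounding all the remaining (non-pairing, non-generic) Rogers contributions by $o(x^{k/2})$ uniformly in $n$ when $x=f(n)$ is permitted to grow sub-exponentially. These contributions involve arithmetic coefficients, such as ratios of special values of Epstein-type zeta functions attached to sublattices of $\Z^j$, whose behaviour as $n\to\infty$ is controlled but delicate, and naive estimates appear to lose too much. This is precisely where the ``new version of Rogers' mean value formula'' advertised in the abstract is required: it should yield estimates of the form $C_{n,k}\,x^{j}$ with $j<k/2$ and $C_{n,k}$ at most exponential in $n$, so that the sub-exponential hypothesis $f(n)=O_\ve(e^{\ve n})$ keeps every such term comfortably below $x^{k/2}$; this is also what dictates the threshold $c_k$ mentioned in the abstract for the individual $k$th moment. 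Once convergence of every moment is established, the theorem follows.
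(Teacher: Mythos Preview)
Your approach via the method of moments is correct and is exactly the paper's route. The ``new version of Rogers' formula'' (Theorem~\ref{newrogers}) is precisely a packaging of the singleton cancellation you describe: expanding $(N_{n,L}(x)-x)^k$ and applying the original Rogers formula term by term, every admissible matrix having a row with a single nonzero entry cancels, leaving only matrices with at least two nonzero entries per row. The pairing main term and its combinatorics are then exactly as you state (cf.\ \eqref{mainterm}).

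One point to correct: the surviving error terms do \emph{not} take the shape $C_{n,k}\,x^{j}$ with $j<k/2$ and $C_{n,k}$ merely ``at most exponential''. After the cancellation, every remaining matrix has $m\leq k-1$ rows, and the collected remainder satisfies $R_{k,n}\ll (3/4)^{n/2}f(n)^{k-1}$ (cf.\ \eqref{remainder}). Thus the exponent of $x=f(n)$ is as large as $k-1>k/2$, but the coefficient \emph{decays} exponentially; it is the product $(3/4)^{n/2}f(n)^{k/2-1}\to 0$, guaranteed by the sub-exponential hypothesis on $f$, that kills the error. Your stated mechanism would not work as written: if $C_{n,k}$ were allowed to grow like $e^{Cn}$ for some fixed $C>0$ while $j<k/2$, then $C_{n,k}\,f(n)^{j-k/2}$ need not tend to zero, since the sub-exponential bound on $f$ gives no \emph{lower} bound on $f(n)^{k/2-j}$.
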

The ``$\B$'' in $Z_{n}^{(\B)}$ stands for ``ball''. 
In fact, the same convergence holds even if 
we consider completely general subsets
of $\R^n$ symmetric about the origin.
\\[5pt]
\noindent
\textbf{Theorem \ref{FIRSTMAINTHEOREM}'.}
\textit{Let $f:\Z^+\to \R^+$ be as in Theorem \ref{FIRSTMAINTHEOREM}, and for each $n$ let $S_n$ be a 
Borel measurable subset of $\R^n$ satisfying $\vol(S_n)=f(n)$ and $S_n=-S_n$. Set
\begin{align}\label{WZNTdef}
Z_{n}:=\frac{\#(L\cap S_n\setminus\{\bn\})-f(n)}{\sqrt{2f(n)}},
\end{align}
with $L$ picked at random in $(X_n,\mu_n)$.
Then
\begin{equation*}
Z_{n}\xrightarrow[]{\textup{ d }}N(0,1)\qquad\text{as }\: n\to\infty.
\end{equation*}}
\begin{remark}
Theorem \ref{FIRSTMAINTHEOREM}' remains true if we consider
$L\cap S_n$ instead of $L\cap S_n\setminus\{\bn\}$ in \eqref{WZNTdef}, since $f(n)\to\infty$.
However, the fact that we remove 
$\bn$ in \eqref{NnLdef} is essential for Theorem \ref{Poissoncor} to hold,
namely in the case when $x$ stays
bounded as $n\to\infty$.
\end{remark}

In Theorem \ref{independentgaussians} below we generalize Theorem \ref{FIRSTMAINTHEOREM}' to the case of
$r$ pairwise disjoint subsets of $\R^n$, 
for any fixed $r\in\Z^+$,
showing that the joint distribution of the normalized counting variables approaches $r$ independent normal distributions.
In the special case of balls centered at the origin, 
we also have the following \textit{functional} central limit theorem,
generalizing Theorem \ref{FIRSTMAINTHEOREM}:

\begin{thm}\label{brownian}
Let $f:\Z^+\to \R^+$ be any function satisfying $\lim_{n\to\infty}f(n)=\infty$ and $f(n)=O_{\ve}(e^{\ve n})$ for every $\ve>0$. Consider, for $n\in\Z^+$ and $L$ picked at random in $(X_n,\mu_n)$,
the random function
\begin{align*}
t\mapsto\widetilde Z_{n}^{(\B)}(t):=\frac{1}{\sqrt{2f(n)}}R_{n,L}\left(tf(n)\right)
\end{align*}
on the interval $[0,1]$.
Let $P_n$ denote the corresponding probability measure on the space $\mathcal D[0,1]$ of cadlag functions on $[0,1]$. Then $\widetilde Z_{n}^{(\B)}(t)$ converges in distribution to one-dimensional Brownian motion, or equivalently, $P_n$ converges weakly to Wiener measure, as $n\to\infty$.
\end{thm}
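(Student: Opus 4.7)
The plan is to follow the standard two-step route to weak convergence in $\mathcal D[0,1]$ with a continuous limit: first establish convergence of all finite-dimensional distributions of $\widetilde Z_n^{(\B)}$ to those of standard Brownian motion, and then prove tightness of the sequence $\{P_n\}$ in $\mathcal D[0,1]$.

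For the finite-dimensional distributions, fix $0=t_0<t_1<\cdots<t_r\leq 1$ and note that
\[
R_{n,L}(t_j f(n))-R_{n,L}(t_{j-1}f(n))=\#(L\cap A_{n,j}\setminus\{\bn\})-(t_j-t_{j-1})f(n),
\]
where for $j\geq 2$ the set $A_{n,j}$ is the origin-symmetric spherical annulus of volume $(t_j-t_{j-1})f(n)$ between the concentric balls of volumes $t_{j-1}f(n)$ and $t_j f(n)$, and $A_{n,1}$ is the ball of volume $t_1 f(n)$. Thus, up to a factor $\sqrt{t_j-t_{j-1}}$, the successive increments of $\widetilde Z_n^{(\B)}$ are precisely the normalized counting variables attached to the pairwise disjoint origin-symmetric sets $A_{n,1},\ldots,A_{n,r}$, so Theorem \ref{independentgaussians} (the multidimensional extension of Theorem \ref{FIRSTMAINTHEOREM}') yields joint convergence of these increments to independent $N(0,t_j-t_{j-1})$ random variables, matching the Brownian increments $B(t_j)-B(t_{j-1})$. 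Since $(\widetilde Z_n^{(\B)}(t_1),\ldots,\widetilde Z_n^{(\B)}(t_r))$ is a fixed linear function of the increments and $\widetilde Z_n^{(\B)}(0)=0=B(0)$ automatically, the finite-dimensional distributions converge to those of a standard Brownian motion.

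For tightness in $\mathcal D[0,1]$ I would invoke Billingsley's moment criterion (Theorem 13.5 in \emph{Convergence of Probability Measures}, 2nd ed.): it suffices to produce a constant $C>0$, independent of $n$, such that
\[
\mathbb E\!\left[\bigl(\widetilde Z_n^{(\B)}(t)-\widetilde Z_n^{(\B)}(s)\bigr)^{2}\bigl(\widetilde Z_n^{(\B)}(u)-\widetilde Z_n^{(\B)}(t)\bigr)^{2}\right]\leq C(u-s)^{2}
\]
for all $0\leq s\leq t\leq u\leq 1$ and all sufficiently large $n$. The left-hand side equals $(2f(n))^{-2}$ times the expectation over $L$ of the product of the two squared centered counts in the disjoint annuli $A_{n,s,t}$ and $A_{n,t,u}$, which can be expanded using the new fourth-moment variant of Rogers' formula developed earlier in the paper. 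Each resulting term is a multiple integral that decomposes according to the coincidence pattern among the four lattice vectors, and the bounds from the fourth-moment analysis used to prove Theorem \ref{FIRSTMAINTHEOREM} translate, after normalization, into an estimate of the form $\leq C(t-s)(u-t)\leq\tfrac{C}{4}(u-s)^{2}$.

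The main obstacle is the \emph{uniform} character of this fourth cross-moment bound over the full parameter range $0\leq s<t<u\leq 1$: when $(u-s)f(n)$ is bounded, the counts lie in the Poissonian rather than in the Gaussian regime, so one cannot argue simply by asymptotic Gaussianity of the individual increments. Instead one must go through the Rogers-type expansion term by term and verify that every contribution is bounded by a constant multiple of $(t-s)(u-t)$, with the constant independent of $n$ and of the partition points. Once this uniform bound is in hand, Billingsley's criterion delivers tightness; combined with the convergence of finite-dimensional distributions established above, this yields the weak convergence of $P_n$ to Wiener measure.
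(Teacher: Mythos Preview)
Your overall strategy (finite-dimensional distributions plus tightness via Billingsley's moment criterion) and your treatment of the finite-dimensional distributions via Theorem~\ref{independentgaussians} applied to concentric annuli are exactly what the paper does.

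The gap is in the tightness step. You assert that a term-by-term analysis of the Rogers expansion yields
\[
\mathbb E\Bigl[\bigl(\widetilde Z_n^{(\B)}(t)-\widetilde Z_n^{(\B)}(s)\bigr)^{2}\bigl(\widetilde Z_n^{(\B)}(u)-\widetilde Z_n^{(\B)}(t)\bigr)^{2}\Bigr]\leq C(u-s)^{2}
\]
uniformly, but this is stronger than what the expansion actually gives, and the paper does \emph{not} obtain such a bound. The obstruction comes from the $\langle 4,q\rangle$-admissible matrices $D$ with a \emph{single} row ($m=1$): for these the integrand is a product of four characteristic functions of the same vector, so the integral is a one-dimensional intersection volume and is a priori only bounded by $\min(t-s,u-t)\,f(n)$, not by a product of two increment lengths. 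After normalizing by $(2f(n))^{-2}$ and summing over $q$, the naive bound for this block is of order $2^{-n}(u-s)/f(n)$, which is linear in $u-s$ and therefore cannot be dominated by $C(u-s)^{2}$ once $u-s\ll 2^{-n}/f(n)$. The paper explicitly flags this term as ``problematic''.

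The paper's remedy exploits the specific ball/annulus geometry: for the single-row integral to be nonzero one must have $q^{n}>s/(u-s)$, so one may restrict the sum to $q\geq Q$ with $Q\asymp\max\bigl(1,(s/(u-s))^{1/n}\bigr)$ and gain a factor $Q^{5-n}$. Even after this refinement the resulting estimate is only
\[
\ll (u-s)\min\Bigl(1,\bigl((u-s)/s\bigr)^{1-5/n}\Bigr),
\]
which is \emph{not} $\ll(u-s)^{2}$ (take $s=0$), but is $\ll(\sqrt{u}-\sqrt{s})^{2\alpha}$ for any fixed $\alpha\in(\tfrac12,1)$ once $n$ is large. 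This suffices for Billingsley's criterion with the choice $F(t)=C\sqrt{t}$ rather than $F(t)=Ct$. So the missing ingredients in your sketch are (i) the geometric vanishing argument forcing $q$ large for the $m=1$ terms, and (ii) the realization that one must settle for the H\"older-type exponent $2\alpha<2$ in the tightness bound.
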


\begin{remark} 
In a different direction, for $n=2$ and fixed $L\in X_2$, a result by Bleher \cite{bleher} (cf.\ also Heath-Brown \cite{heath-brown} for the case $L=\Z^2$) implies the existence of a limit distribution of $t^{-1/4}R_{2,L}(t)$ for $t$ random in $(0,T)$, as $T\to\infty$. This limit distribution is non-Gaussian; however the corresponding limit for the number of lattice points in thin annuli is Gaussian in certain situations; cf.\ \cite{hughesrudnick} and \cite{wigman}. We are not aware of any similar results in dimension $n\geq3$; cf.\ however Peter \cite{peter}.
\end{remark}

\vspace{5pt}

It is an interesting question whether the above limit results could be extended to more rapidly
growing functions $f(n)$. 
Our proof of Theorem \ref{FIRSTMAINTHEOREM}'
goes by establishing convergence of all moments of $Z_n$.
For any \textit{fixed} moment $\E(Z_n^{\:k})$,
the method actually yields the desired limit result even for $f(n)$ of modest exponential growth;
however for more rapidly growing $f(n)$ the moment diverges (if $k\geq3$).
In the case of balls, we have determined the precise growth rate where this transition occurs:
Set 
\begin{align}\label{CKDEF}
c_2=+\infty \quad\text{and}\quad c_k=\frac{k-1}{k-2}\log(k-1)-\log k\quad(k\geq3). 
\end{align}
Note that $\{c_k\}_{k\geq3}$ is a positive, strictly decreasing sequence;
its first values are $c_3=0.28768\ldots$, $c_4=0.26162\ldots$, $c_5=0.23895\ldots$,
and $c_k\sim k^{-1}\log k$ as $k\to\infty$.

\begin{thm}\label{OPTIMALFNGROWTHTHM}
Let $k\geq2$ and $0<c<c_k$,
and let $f:\Z^+\to \R^+$ be any function satisfying $\lim_{n\to\infty}f(n)=\infty$ and $f(n)=O(e^{c n})$.
For each $n$ let $S_n$ be a 
Borel measurable subset of $\R^n$ satisfying $\vol(S_n)=f(n)$ and $S_n=-S_n$,
and define $Z_n$ as in Theorem \ref{FIRSTMAINTHEOREM}'.
Then
\begin{align}\label{OPTIMALFNGROWTHTHMres}
\lim_{n\to\infty}\mathbb E\big(Z_n^{\:k}\big)=
\begin{cases}
0 & \text{ if $k$ is odd,}\\
(k-1)!! & \text{ if $k$ is even.}
\end{cases}
\end{align}
On the other hand, if $k\geq3$ and $c>c_k$, and if $f:\Z^+\to\R^+$ is any function satisfying
$f(n)\gg e^{cn}$ as $n\to\infty$,
then $\mathbb E\big(\bigl(Z_n^{(\B)}\bigr)^k\big)\to+\infty$ as $n\to\infty$.
\end{thm}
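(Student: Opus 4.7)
The plan is to establish both halves of the theorem by the method of moments. Write $N(L):=\#(L\cap S_n\setminus\{\bn\})$ and expand via the binomial theorem:
\begin{equation*}
\E(Z_n^{\:k})=\frac{1}{(2f(n))^{k/2}}\sum_{j=0}^{k}\binom{k}{j}(-f(n))^{k-j}\E(N^j).
\end{equation*}
Each $\E(N^j)$ is a sum of integrals $\int_{X_n}\prod_i\chi_{S_n}(\vecv_i)\,d\mu_n(L)$ over ordered $j$-tuples in $(L\setminus\{\bn\})^j$, and I would analyze it using the generalized Rogers' mean value formula developed in the earlier sections. Rogers' formula decomposes such an integral into a sum over combinatorial ``types'' $\tau$ that record the integer linear relations among the $\vecv_i$, each type contributing an explicit multiple integral weighted by inverse zeta factors and reciprocals of $n$th powers of certain determinants attached to $\tau$.

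\textbf{Step 1 (generic cancellation).} The ``generic'' type in $\E(N^j)$, in which the $\vecv_i$ are $\Z$-independent, contributes $f(n)^j$ up to corrections of order $1-1/\zeta(n-\ell)$ which are exponentially small in $n$. In the expansion for $\E(Z_n^{\:k})$, these generic contributions telescope to zero via $\sum_j\binom{k}{j}(-f)^{k-j}f^j=0$.

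\textbf{Step 2 (pairings produce Gaussian moments).} The next class consists of ``partial pairings'' in which some of the $\vecv_i$'s are paired as $\vecv_{i_1}=\pm\vecv_{i_2}$ and the representatives of distinct pairs (together with the unpaired vectors) are $\Z$-independent. Since $S_n=-S_n$, each such pair contributes asymptotically $2f(n)$. Summing partial-pairing contributions across all $j\leq k$ and combining with the binomial factors, a standard moment-bookkeeping computation (identical to Wick's formula for independent centered indicator-like variables) shows that the pairing part of $\E(Z_n^{\:k})$ equals $(k-1)!!$ for $k$ even and $0$ for $k$ odd, up to $o(1)$.

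\textbf{Step 3 (bounding non-pairing types).} This is the heart of the argument. Every remaining type $\tau$ is associated with a nontrivial integer linear relation that is \emph{not} merely antipodal pairing. Using Rogers' formula and Step 3-style bounds from the paper, the contribution of $\tau$ to $\E(N^j)$ is controlled by an expression of the form
\begin{equation*}
C_\tau\cdot f(n)^{m(\tau)}\cdot e^{-n\lambda(\tau)},
\end{equation*}
where $m(\tau)$ is a combinatorial rank in $\{1,\ldots,k-1\}$ and $\lambda(\tau)>0$ is an arithmetic exponent, both determined by the integer coefficient data of $\tau$. After dividing by $(2f(n))^{k/2}$ one obtains $\lesssim f(n)^{m(\tau)-k/2}e^{-n\lambda(\tau)}$, which is $o(1)$ uniformly in $f(n)\leq Ce^{cn}$ provided
\begin{equation*}
c<c_k\;:=\;\min_{\tau:\,m(\tau)<k/2}\frac{\lambda(\tau)}{k/2-m(\tau)}.
\end{equation*}
The content of the proof is the explicit combinatorial optimization of the right-hand side, and the verification that the minimum equals $\frac{k-1}{k-2}\log(k-1)-\log k$, attained at a specific extremal type $\tau_\star$. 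Together with Steps 1--2 this yields \eqref{OPTIMALFNGROWTHTHMres}.

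\textbf{Step 4 (divergence for $c>c_k$).} For the second assertion, specialize to $S_n=\B^n$, where the spherical symmetry permits closed-form evaluation of the $\tau_\star$-integral. This gives a matching \emph{lower} bound showing that the $\tau_\star$ contribution to $\E((Z_n^{(\B)})^k)$ is of order exactly $f(n)^{m(\tau_\star)-k/2}e^{-n\lambda(\tau_\star)}$. One must then verify, via a sign analysis exploiting the antipodal symmetry and the even/odd parity of $k$, that this contribution enters with a fixed sign that cannot be cancelled by any other type; combined with the uniform upper bound of Step 3 on all other non-pairing contributions, this forces $\E((Z_n^{(\B)})^k)\to+\infty$ when $f(n)\gg e^{cn}$ with $c>c_k$.

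The hard part will be Step 3: enumerating all non-pairing types arising in the generalized Rogers' formula and bounding each sharply enough to identify the \emph{exact} threshold $c_k$ rather than merely some sufficient rate, plus the identification of $\tau_\star$ so that the abstract minimum above matches the explicit formula \eqref{CKDEF}. A subsidiary difficulty is the sign-tracking needed in Step 4 to upgrade non-vanishing of the critical contribution to genuine divergence of the moment.
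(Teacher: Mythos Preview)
Your architecture is right, but two points are off and a third --- the actual content --- is missing.

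First, the minimization in Step~3 is backwards: terms with $m(\tau)<k/2$ are automatically $o(1)$ since $f(n)\to\infty$; the binding constraint comes from types with $m(\tau)>k/2$, and the threshold is $c_k=\min_{m(\tau)>k/2}\lambda(\tau)/(m(\tau)-k/2)$. The extremal type in fact has $m=k-1$, the largest possible value. Second, you should apply Theorem~\ref{newrogers} directly to $\E(Z_n^{\,k})$ rather than expanding binomially and using the original Rogers formula on each $\E(N^j)$. Theorem~\ref{newrogers} already performs the cancellation of your Step~1, and, crucially, when the $f_j$ are characteristic functions every surviving term is \emph{nonnegative}. This makes your Step~4 sign analysis unnecessary: once the contribution of the single matrix \eqref{WORSTMATRICES} is shown to diverge (via the lower bound in Lemma~\ref{BASICJBOUNDSlem}), divergence of the whole moment follows immediately by positivity. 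In your binomial setup this positivity is hidden, and the sign-tracking you anticipate becomes a real, self-inflicted obstacle.

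Third, the ``explicit combinatorial optimization'' you defer is actually a geometric argument, and it is the entire substance of the proof. The decay rate $\lambda(\tau)$ is governed by the quantity $\V_a[c_1,\ldots,c_a]$ of \eqref{VMdef}: the maximal volume of a parallelotope spanned by unit vectors $\vecx_i$ subject to $|\sum c_i\vecx_i|\leq 1$. To pass from arbitrary symmetric $S_n$ to this quantity the paper uses Rogers' single integral inequality (Lemma~\ref{BASICVBOUNDlem}); to identify the worst type it proves monotonicity of $\V_a$ in each argument (Lemma~\ref{VaMONOTONICITYlem}, via the delicate Lemma~\ref{texistsLEM}), computes $\tV_a=\V_a[1,\ldots,1]$ exactly (Lemma~\ref{WVlem}), and then uses strict convexity of $a\mapsto\log\tV_a$ to show that among all admissible $(a_1,\ldots,a_r)$ with $\sum a_j=m$ the product $\prod_j\tV_{a_j}$ is maximized when all but one $a_j$ equal $0$ or $1$ (proof of Lemma~\ref{OPTIMALFNGROWTHmainlem}). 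This is precisely where the formula \eqref{CKDEF} for $c_k$ emerges; nothing in your sketch hints at these ingredients.
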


The last result 
shows in particular that the assumption of sub-exponential growth 
imposed in Theorem \ref{FIRSTMAINTHEOREM}' is best possible 
for our method of proof via convergence of moments;
however the question remains open whether a limit distribution of $Z_n$ exists 
(Gaussian or not) also for more rapidly growing $f(n)$.
Theorem \ref{OPTIMALFNGROWTHTHM} shows in this regard that
any limit distribution of any subsequence of $Z_n$
is necessarily \textit{close} to the Gaussian $N(0,1)$ distribution, in the weak topology,
so long as $f(n)=O(e^{cn})$ with $c>0$ sufficiently small.


\begin{remark}
In the setting of balls as in Theorem \ref{FIRSTMAINTHEOREM},
taking $f(n)=e^{cn}$ corresponds to counting all lattice vectors $\vecm\in L\setminus\{\bn\}$
of length $|\vecm|\lesssim e^c\sqrt{\frac n{2\pi e}}$.
In this connection we note that
for any fixed $N$, with probability tending to one as $n\to\infty$,
the first $N$ shortest non-zero vectors $\pm\vecv_1,\dots,\pm\vecv_N$ of a random lattice $L\in X_n$
all have length $\sqrt{\frac n{2\pi e}}(1+O(\frac{\log n}n))$.
This follows e.g.\ from Theorem \ref{POISSONTHEOREM}, using the asymptotics
$V_n\sim(\frac{2\pi e}n)^{n/2}(\pi n)^{-1/2}$.
\end{remark}

\enlargethispage{15pt}

\begin{remark}
Kelmer has recently obtained a bound on the mean square of $R_{n,L}(x)$ for fixed $n\geq2$ 
and large $x$; cf.\ \cite[Thm.\ 2]{kelmer}.
This bound supports the conjecture that for almost every $L\in X_n$,
$R_{n,L}(x)\ll x^{\frac12-\frac1{2n}+\ve}$ holds as $x\to\infty$
(cf.\ also \cite{gotze98}, \cite{holmin}).
Kelmer's bound implies that if $f(n)$ grows \textit{sufficiently rapidly}
(the growth condition could be made explicit with further work),
then $Z_n^{(\B)}$ converges in distribution to $0$
as $n\to\infty$, showing that the normalization in \eqref{FIRSTMAINTHEOREMdef} 
is inappropriate in this regime.
\end{remark}

Our original motivation for studying the limit distribution of $Z_n^{(\B)}$ comes from questions concerning
the Epstein zeta function of a random lattice $L\in X_n$ as $n\to\infty$;
cf.\ \cite{SaS, epstein1, epstein2}.
Recall that for $\Re s>\frac n2$ and $L\in X_n$ the Epstein zeta function is defined by the absolutely convergent series
\begin{equation*}
E_n(L,s):=\sum_{\vecm\in L\setminus\{\vec0\}}|\vecm|^{-2s}.
\end{equation*}  
The function $E_n(L,s)$ can be meromorphically continued to $\C$ and satisfies a functional equation of
"Riemann type" relating $E_n(L,s)$ and $E_n(L^*,\frac n2-s)$. (Here $L^*$ denotes the dual lattice of $L$.)
An outstanding question from \cite{epstein2} is whether $E_n(L,s)$
for $s$ on or near the central point $s=\frac n4$, possesses,
after appropriate normalization, a limit distribution as $n\to\infty$?
This question turns out to be closely related to the behavior of the random function $\widetilde Z_n^{(\B)}(t)$,
and we expect that Theorem \ref{brownian} in this paper in combination with the methods of
\cite{epstein2} will make it possible to give an answer in the case of $s=cn$
with $c>\frac14$ tending to $\frac14$ sufficiently slowly as a function of $n$.
However in order to handle $c=\frac14$ or $c$ arbitrarily near $\frac14$, 
it appears that we need a precise understanding of the limit of $\widetilde Z_n^{(\B)}(t)$
when the volume $f(n)$ is allowed to grow as rapidly as $e^{\frac12(1-\log2)n}$,
and furthermore we need to understand this distribution jointly with the corresponding distribution for the
dual lattice of $L$.
We hope to return to these matters in future work.


\vspace{5pt}

The organization of the paper is as follows.
As mentioned, Theorem \ref{FIRSTMAINTHEOREM}' is proved by computing the moments of $Z_n$;
similarly Theorem \ref{brownian} is proved by computing the mixed moments of the finite dimensional distributions of $\widetilde Z_{n}^{(\B)}(t)$. The standard tool for calculating moments of this form is Rogers' mean value formula \cite{rogers1};  however, the assumption $\lim_{n\to\infty}f(n)=\infty$ causes divergence problems. To get around these, we develop, in Section \ref{newrogerssection}, a new version of Rogers' formula suitable for calculating moments of functions that can be represented in the form
\begin{equation*}
\sum_{\vecm\in L\setminus\{\vec0\}}\rho(V_n|\vecm|^n)-\int_0^{\infty}\rho(x)\,dx
\end{equation*}
for suitable test functions $\rho$; in particular the formula can be applied to calculate moments of $\widetilde Z_n^{(\B)}(t)$. The proof of this formula is combinatorial in nature. 
Using the formula, in Section \ref{proofsection} we prove Theorems \ref{FIRSTMAINTHEOREM}' and \ref{Poissoncor},
and in Section \ref{brownianproofsec} we prove Theorem \ref{brownian}.
Finally in Section \ref{expgrowthsec} we prove Theorem \ref{OPTIMALFNGROWTHTHM}, 
by a careful analysis of the sizes of the various non-leading order terms appearing in the 
moment computation used to prove Theorem \ref{FIRSTMAINTHEOREM}'.

\subsection*{Acknowledgments}
We are grateful to Svante Janson for helpful discussions. 

\section{A new version of Rogers' mean value formula}
\label{newrogerssection} 

To begin, we describe Rogers' original formula. Let $1\leq k\leq n-1$ and let $\rho:(\R^n)^k\to\R_{\geq0}$ be a non-negative Borel measurable function. In \cite{rogers1} Rogers proved the following remarkable identity: 
\begin{align}\label{rogformula}
&\int_{X_n}\sum_{\vecm_1,\ldots,\vecm_k\in L\setminus\{\vec0\}}\rho(\vecm_1,\ldots,\vecm_k)\,d\mu_n(L)\\
&=
\sum_{q=1}^{\infty}\sum_{D}
\Big(\frac{e_1}{q}\cdots\frac{e_m}{q}\Big)^n\int_{\R^n}\cdots\int_{\R^n}\rho\Big(\sum_{i=1}^m\frac{d_{i1}}{q}\vecx_i,\ldots,\sum_{i=1}^m\frac{d_{ik}}{q}\vecx_i\Big)\,d\vecx_1\ldots d\vecx_m.\nonumber
\end{align}
Here the inner sum is over all integer matrices $D=(d_{ij})$ having size $m\times k$ for some $1\leq m\leq k$,
satisfying the following properties:
No column of $D$ vanishes identically;
the entries of $D$ have greatest common divisor equal to 1;
and finally there exists a division $(\nu;\mu)=(\nu_1,\ldots,\nu_m;\mu_1,\ldots,\mu_{k-m})$ 
of the numbers $1,\ldots,k$ into two sequences $\nu_1,\ldots,\nu_m$ and $\mu_1,\ldots,\mu_{k-m}$, satisfying
\begin{align}\label{division}
& 1=\nu_1<\nu_2<\ldots<\nu_m\leq k,\nonumber\\
& 1<\mu_1<\mu_2<\ldots<\mu_{k-m}\leq k,\\
& \nu_i\neq\mu_j, \text{ if $1\leq i\leq m$, $1\leq j \leq k-m$},\nonumber
\end{align}
such that
\begin{align}\label{division2}
&d_{i\nu_j}=q\delta_{ij}, \hspace{10pt}i=1,\ldots,m,\,\,j=1,\ldots,m,\\
&d_{i\mu_j}=0,\hspace{10pt}\text{ if }\,\mu_j<\nu_i,\,\,i=1,\ldots,m,\,\,j=1,\ldots,k-m.\nonumber
\end{align}
We call these matrices \textit{$\langle k,q\rangle$-admissible}.\footnote{Note that the only $\langle k,1\rangle$-admissible
matrix with $m=k$ is the $k\times k$ identity matrix, and 
for $q>1$ there are no $\langle k,q\rangle$-admissible matrices with $m=k$.}
Finally $e_i=(\ve_i,q)$, $i=1,\ldots,m$, where $\ve_1,\ldots,\ve_m$ are the elementary divisors of the matrix $D$. We stress that the right-hand side of \eqref{rogformula} is a \emph{positive} infinite linear combination of integrals of $\rho$ over certain linear subspaces of $(\R^n)^k$.

\begin{remark}\label{CONVremark}
The formula \eqref{rogformula} should be understood as an equality in $\R_{\geq0}\cup\{+\infty\}$; if either side of \eqref{rogformula} is divergent, then so is the other side.
By Schmidt, \cite[Thm.\ 2]{schmidtconvergence}, 
if $\rho$ is bounded and of compact support then both sides of \eqref{rogformula}
are finite.
Hence, under this restriction we may remove the assumption that $\rho$ is non-negative,
i.e.\ the formula \eqref{rogformula} is in fact valid for any real-valued Borel measurable function $\rho$ 
on $(\R^n)^k$ which is bounded and of compact support, 
with both sides of \eqref{rogformula} being 
nicely absolutely convergent.
\end{remark}

\begin{remark}
It follows from the conditions on the matrices $D$ and \cite[Thm.\ 14.5.1]{hua} that  we always have $e_1=1$, and hence $\big(\frac{e_1}{q}\cdots\frac{e_m}{q}\big)^n\leq q^{-n}$. 
\end{remark}


We now state our new version of Rogers' mean value formula.

\begin{thm}\label{newrogers}
Let $n>k>0$, and let $f_1,\ldots,f_k$ be real-valued Borel measurable functions on $\R^n$ which are 
bounded and of compact support.
Define the functions $F_1,\ldots,F_k$ on $X_n$ by
\begin{align}\label{Fj}
F_j(L):=
\sum_{\vecm\in L\setminus\{\vec0\}}f_j(\vecm)-\int_{\R^n}f_j(\vecx)\,d\vecx.
\end{align}
Then
\begin{multline*}
\mathbb E\Big(\prod_{j=1}^kF_j(L)\Big)=\sum_{q=1}^{\infty}{\sum_{D}}'\Big(\frac{e_1}{q}\cdots\frac{e_m}{q}\Big)^n\\
\times\int_{\R^n}\cdots\int_{\R^n}f_1\Big(\sum_{i=1}^m\frac{d_{i1}}{q}\vecx_i\Big)\cdots f_k\Big(\sum_{i=1}^m\frac{d_{ik}}{q}\vecx_i\Big)\,d\vecx_1\ldots d\vecx_m,
\end{multline*}
where $'$ indicates that the inner sum is over all $\langle k,q\rangle$-admissible matrices $D$ with 
the property that there are at least two non-zero entries in each row.
\end{thm}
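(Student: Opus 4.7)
The plan is to derive Theorem \ref{newrogers} from Rogers' original formula \eqref{rogformula} via binomial expansion and a sign-cancellation (inclusion-exclusion) argument.

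Writing $S_j(L) := \sum_{\vecm \in L\setminus\{\vec0\}} f_j(\vecm)$ and $I_j := \int_{\R^n} f_j(\vecx)\,d\vecx$, so that $F_j(L) = S_j(L) - I_j$, the binomial expansion gives
\begin{equation*}
\prod_{j=1}^k F_j(L) \;=\; \sum_{A\subseteq\{1,\ldots,k\}} (-1)^{k-|A|} \prod_{j\in A} S_j(L)\cdot\prod_{j\notin A}I_j.
\end{equation*}
Taking $\E$ and applying the bounded, compactly supported form of Rogers' formula (Remark \ref{CONVremark}) to each $\E\bigl(\prod_{j\in A}S_j(L)\bigr)$ converts the expression into a triple sum over $(A,q,D_A)$ with $D_A$ a $\langle |A|,q\rangle$-admissible matrix. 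Each factor $I_j = \int f_j(\vecy)\,d\vecy$ with $j\notin A$ can then be absorbed into the integral attached to $D_A$ by treating $\vecy$ as an additional integration variable arising from a new \emph{singleton row} --- a row with a single non-zero entry $q$ in column $j$. This produces an extended $\langle k,q\rangle$-admissible matrix $D$ having one extra singleton row for each $j\notin A$.

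This correspondence sets up a bijection between triples $(A,q,D_A)$ on one side, and triples $(q,D,T')$ on the other, where $D$ is $\langle k,q\rangle$-admissible and $T' = \{1,\ldots,k\}\setminus A$ is a subset of the \emph{singleton-pivot set} $T(D)\subseteq\{1,\ldots,k\}$ of $D$ (the set of pivot columns $\nu_i$ whose row contains only one non-zero entry). I would verify that (i) removing any subset $T'\subseteq T(D)$ of singleton rows from an admissible $D$ produces an admissible matrix --- the delicate point being the normalization $\nu_1=1$, which is maintained because, inductively, if columns $1,\ldots,s$ are all singleton pivots then admissibility together with the non-vanishing of column $s+1$ forces $\nu_{s+1}=s+1$; (ii) the weight is preserved: after a row/column permutation one has $D\cong D_A\oplus qI_{|T'|}$, and using the interpretation $\prod_i e_i = \bigl|\Z^{m}/(D\Z^k+q\Z^{m})\bigr|$, one obtains $\prod_i e_i(D) = \prod_i e_i(D_A)\cdot q^{|T'|}$; together with $m(D) = m(D_A)+|T'|$ this shows that $(e_1/q\cdots e_m/q)^n$ is unchanged; (iii) the integrand is preserved, since each added singleton row integrates out to exactly $I_j$.

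With the bijection in hand,
\begin{equation*}
\E\Bigl(\prod_{j=1}^k F_j(L)\Bigr) \;=\; \sum_{q=1}^\infty\sum_D W(D)\sum_{T'\subseteq T(D)}(-1)^{|T'|},
\end{equation*}
where $W(D)$ denotes the weight-times-integral contribution of $D$ and the outer sum ranges over all $\langle k,q\rangle$-admissible matrices. The inner alternating sum equals $(1-1)^{|T(D)|}$, which vanishes unless $T(D)=\emptyset$, in which case it equals $1$. Thus only the $\langle k,q\rangle$-admissible $D$ with no singleton rows --- equivalently, with at least two non-zero entries in every row --- survive, which is precisely the right-hand side of Theorem \ref{newrogers}. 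The main obstacle is the combinatorial bookkeeping in steps (i)--(iii): one must carefully track how the $(\nu;\mu)$-partition structure transforms under insertion/deletion of singleton rows and check the weight-preservation identity via invariant factors. All interchanges of summation and integration are justified by the absolute convergence guaranteed by Remark \ref{CONVremark}.
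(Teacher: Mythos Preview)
Your proposal is correct and follows essentially the same approach as the paper's proof: both expand $\prod_j F_j$ binomially, apply Rogers' formula to each piece, absorb the constants $I_j$ as additional singleton rows to pass to a common indexing by $\langle k,q\rangle$-admissible matrices, and then note that the alternating sum over subsets of singleton rows annihilates every matrix possessing such a row. The paper's set $S(D')$ (columns whose unique non-zero entry is also the unique non-zero entry of its row) coincides with your $T(D)$, and the weight preservation is handled there via the identity $e_1'\cdots e_{m'}'=q^{\#(K\setminus A)}e_1\cdots e_m$ from Rogers' Lemma~1, which is equivalent to your cokernel argument.
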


We note that in the simple case $k=1$, Theorem \ref{newrogers} states that  
\begin{align*}
\E(F_1(L))=0.
\end{align*}
This is in fact an immediate consequence of Siegel's mean value formula; see \cite{siegel}.

\begin{proof}
Let $K=\{1,\dots,k\}$. Using \eqref{Fj} and \eqref{rogformula}, we get
\begin{align}\notag
&\mathbb E\Big(\prod_{j=1}^kF_j(L)\Big)
\\
&=\sum_{A\subset K}(-1)^{\#(K\setminus A)}\bigg(\prod_{j\in K\setminus A}\int_{\R^n}f_j(\vecx)\,d\vecx\bigg)\mathbb E\bigg(\prod_{j\in A}\Big(\sum_{\vecm_j\in L\setminus\{\vec0\}}f_j(\vecm_j)\Big)\bigg)\nonumber\\
\label{FIRSTAPPLROGERSnew}
&=\sum_{A\subset K}(-1)^{\#(K\setminus A)}\bigg(\prod_{j\in K\setminus A}\int_{\R^n}f_j(\vecx)\,d\vecx\bigg)\sum_{q=1}^{\infty}\sum_{D}\Big(\frac{e_1}{q}\cdots\frac{e_m}{q}\Big)^n\\
&\hspace{100pt}\times\int_{\R^n}\cdots\int_{\R^n}
\prod_{\ell=1}^a f_{j_\ell}\Big(\sum_{i=1}^m\frac{d_{i\ell}}{q}\vecx_i\Big)
\,d\vecx_1\ldots d\vecx_m,\nonumber
\end{align}
where $A$ runs through all subsets of $K$,
we write $a=\#A$ and $A=\{j_1,\ldots,j_a\}$ with $j_1<\cdots<j_a$,
and the inner sum is taken over all $\langle a,q\rangle$-admissible matrices $D$.
As usual $m=m(D)$ denotes the number of rows of $D$.
Note that all multiple sums and integrals appearing in \eqref{FIRSTAPPLROGERSnew} are
absolutely convergent, because of our assumptions on $f_1,\ldots,f_k$; cf.\ Remark \ref{CONVremark}.

Given any $A=\{j_1,\ldots,j_a\}$, $q$ and $D$ appearing in the sum,
we set $m':=m+k-a$ and write $K\setminus A=\{j_1',\ldots,j_{k-a}'\}$ with $j_1'<\cdots<j_{k-a}'$.
We then let $D'=D'(A,D)=(d_{ij}')$ be the $m'\times k$ matrix
which has $d_{i,j_\ell}'=d_{i,\ell}$ for $\langle i,\ell\rangle\in\{1,\ldots,m\}\times\{1,\ldots,a\}$,
$d_{m+\ell,j'_\ell}=q$ for $\ell=1,\ldots,k-a$,
and all other entries 
equal to zero. Note that the matrix $D'$ is typically not $\langle k,q\rangle$-admissible.
Let $\ve_1',\ldots,\ve'_{m'}$ be the elementary divisors of $D'$ and set $e_j'=(\ve_j',q)$.
Then $e_1'\cdots e_{m'}'=q^{\#(K\setminus A)}e_1\cdots e_m$
(cf., e.g., \cite[Lemma 1]{rogers1}),
and so $\frac{e_1}{q}\cdots\frac{e_m}{q}=\frac{e_1'}{q}\cdots\frac{e'_{m'}}{q}$.
We may now rewrite each product of integrals in the right-hand side of \eqref{FIRSTAPPLROGERSnew} in terms of the matrices 
$D'=D'(A,D)=(d'_{ij})$: 
\begin{multline}\label{SECONDAPPLROGERSnew}
\mathbb E\Big(\prod_{j=1}^kF_j(L)\Big)=\sum_{A\subset K}(-1)^{\#(K\setminus A)}\sum_{q=1}^{\infty}\sum_{D}\Big(\frac{e_1'}{q}\cdots\frac{e'_{m'}}{q}\Big)^n\\
\times\int_{\R^n}\cdots\int_{\R^n}
\prod_{\ell=1}^k f_\ell\Big(\sum_{i=1}^{m'}\frac{d'_{i\ell}}{q}\vecx_i\Big)
\,d\vecx_1\ldots d\vecx_{m'}.
\end{multline} 
Note that any matrix $D'=D'(A,D)$ appearing in this sum can be brought, by a unique row permutation,
into a $\langle k,q\rangle$-admissible matrix
(this is easily seen by considering the admissibility conditions column by column, starting from the left).
Conversely, given any $\langle k,q\rangle$-admissible matrix $D'$,
let $S(D')$ 
be the set of indices of those columns of $D'$ 
which have the property that the column has a unique non-zero entry and this entry is also the only non-zero entry in its row.
Then the matrix $D'$ is attained as a row permutation of $D'(A,D)$ 
for exactly $2^{\#S(D')}$ pairs $\langle A,D\rangle$ appearing in the above sum,
namely exactly once for each $B\subset S(D')$.
Hence
\begin{multline}
\mathbb E\Big(\prod_{j=1}^kF_j(L)\Big)=\sum_{q=1}^{\infty}\sum_{D'}\Big(\sum_{B\subset S(D')}(-1)^{\#B}\Big)\Big(\frac{e_1'}{q}\cdots\frac{e_{m'}'}{q}\Big)^n\\
\hspace{12pt}\times\int_{\R^n}\cdots\int_{\R^n}\prod_{\ell=1}^k f_\ell\Big(\sum_{i=1}^{m'}\frac{d'_{i\ell}}{q}\vecx_i\Big)\,d\vecx_1\ldots d\vecx_{m'},
\end{multline}
where now the sum over $D'$ is taken over all $\langle k,q\rangle$-admissible matrices.
But here $\sum_{B\subset S(D')}(-1)^{\#B}$ equals $1$ if $S(D')=\emptyset$ and equals $0$ otherwise.
Hence we obtain the formula stated in the theorem.
\end{proof}

\begin{remark}
Note the close connection between the formula in Theorem \ref{newrogers} and the formula in \cite[Prop.\ 7.1]{epstein2}.
\end{remark}

\begin{remark}
Clearly the family of functions $f_j$ admitted in Theorem~\ref{newrogers} can be extended by approximation arguments. 
However the present family is more than sufficient for our purposes in this paper.
\end{remark}

\begin{remark}
The formula in Theorem \ref{newrogers} is useful in the study of the Epstein zeta function $E_n(L,s)$. Recall from \cite[Sect.\ 4]{epstein2} that, for  $L\in X_n$ and $s\in\C\setminus\{0,\frac n2\}$, we have 
\begin{align}\label{FN}
\pi^{-s}\Gamma(s)E_n(L,s)=H_n(L,s)+H_n(L^*,\tfrac n2-s),
\end{align}
where $L^*$ is the dual lattice of $L$,
\begin{align*}
H_n(L,s):=-\frac{1}{\sfrac{n}{2}-s}+{\sum_{\vecm\in L\setminus\{\vec0\}}}G\big(s,\pi |\vecm|^2\big),
\end{align*}
and
\begin{align*}
G(s,x):=\int_1^{\infty}t^{s-1}e^{-xt}\,dt, \qquad \Re x>0.
\end{align*}
The connection between $E_n(L,s)$ and the present discussion comes from the relation 
\begin{align*}
H_n(L,s)=\int_0^{\infty}G\left(s,\pi\left(V_n^{-1}x\right)^{2/n}\right)\,dR_{n,L}(x), \qquad 0<s<\tfrac n2
\end{align*}
(cf.\ \cite[Eq.\ (4.7)]{epstein2}). It follows that Theorem \ref{newrogers} can be used to calculate (truncated) moments of $H_n(L,s)$. Furthermore, since $H_n(L,s)$ dominates $H_n(L^*,\frac n2-s)$ in the interval $(\frac14+\ve)n<s<\frac n2$ ($\ve>0$ fixed) for most lattices $L\in X_n$ when $n$ is large enough, we also find that the (truncated) moments of $H_n(L,s)$ are of apparent interest in the study of $E_n(L,s)$ in the limit as $n\to\infty$. We do not pursue this further here since we plan to give a detailed account of this topic elsewhere.
\end{remark}

We close this section by giving a generalization of Theorem \ref{newrogers} 
which seems potentially useful, although it will not be used in the present paper.

\begin{thm}\label{newrogersgen}
Let $k,\ell>0$ and $n>k\ell$. Let $g_j:(\R^n)^k\to\R$, $1\leq j\leq \ell$, be 
Borel measurable functions which are bounded and of compact support.
Consider the related functions $G_j: X_n\to\R$ defined by
\begin{align*}
G_j(L):=\sum_{\vecm_1,\ldots,\vecm_k\in L\setminus\{\vec0\}}g_j(\vecm_1,\ldots,\vecm_k)-\mathbb E\bigg(\sum_{\vecm_1,\ldots,\vecm_k\in L\setminus\{\vec0\}}g_j(\vecm_1,\ldots,\vecm_k)\bigg).
\end{align*}  
Then
\begin{align*}
&\mathbb E\Big(\prod_{j=1}^{\ell}G_j(L)\Big)=\sum_{q=1}^{\infty}{\sum_{D}}^{*}\Big(\frac{e_1}{q}\cdots\frac{e_m}{q}\Big)^n
\\
&\times
\int_{\R^n}\cdots\int_{\R^n}
\prod_{j=0}^{\ell-1} g_{j+1}\Bigl(
\sum_{i=1}^{m}\frac{d_{i,jk+1}}{q}\vecx_i,\sum_{i=1}^{m}\frac{d_{i,jk+2}}{q}\vecx_i,
\cdots,\sum_{i=1}^{m}\frac{d_{i,jk+k}}{q}\vecx_i\Bigr)
\,d\vecx_1\ldots d\vecx_m,
\end{align*}
where $^{*}$ indicates that the inner sum is over all $\langle k\ell,q\rangle$-admissible matrices $D$ with the property
that there do not exist any $j\in\{0,\ldots,\ell-1\}$ and $1\leq i_1\leq i_2\leq m$ such that the submatrix
at rows $i_1,i_1+1,\ldots,i_2$ and columns $jk+1,jk+2,\ldots,jk+k$ of $D$ 
is a multiple of a $\langle k,q'\rangle$-admissible
matrix for some $q'\mid q$, and all the remaining entries of these rows and columns of $D$ are zero.
\end{thm}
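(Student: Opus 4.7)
The plan is to generalize the inclusion-exclusion argument used in the proof of Theorem \ref{newrogers}. Writing $G_j(L) = S_j(L) - \E(S_j)$ with $S_j(L) = \sum_{\vecm_1, \ldots, \vecm_k \in L \setminus\{\vec0\}} g_j(\vecm_1, \ldots, \vecm_k)$, one expands the product and takes expectation to obtain
\begin{align*}
\E\Big(\prod_{j=1}^\ell G_j(L)\Big) = \sum_{A \subseteq \{1,\ldots,\ell\}} (-1)^{\ell - |A|} \, \E\Big(\prod_{j \in A} S_j\Big) \prod_{j \notin A} \E(S_j).
\end{align*}
Then apply Rogers' formula \eqref{rogformula} to the first factor (producing a sum over $\langle k|A|, q\rangle$-admissible matrices $D_A$) and to each $\E(S_j)$ for $j \notin A$ (producing a sum over $\langle k, q_j\rangle$-admissible matrices $D_j$); by the boundedness and compact support assumptions on the $g_j$'s, all sums are absolutely convergent, cf.\ Remark \ref{CONVremark}.

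For each data tuple $(A, q, D_A, (q_j, D_j)_{j \notin A})$, set $Q = \operatorname{lcm}(q, (q_j)_{j \notin A})$ and assemble a combined matrix $\tilde D$ of size $(m_A + \sum_{j \notin A} m_j) \times k\ell$: the block $(Q/q) D_A$ occupies the $k|A|$ columns indexed by the $A$-blocks in the first $m_A$ rows; for each $j \notin A$, $(Q/q_j) D_j$ occupies the $k$ columns $(j-1)k+1, \ldots, jk$ in a fresh block of $m_j$ rows; all other entries of $\tilde D$ vanish. Since every admissible matrix has entries with gcd $1$, the overall gcd of $\tilde D$ equals $\gcd(Q/q, (Q/q_j)_{j \notin A}) = Q/\operatorname{lcm}(q, (q_j)_{j \notin A}) = 1$. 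Using the fact that every elementary divisor of a $\langle k, q'\rangle$-admissible matrix divides $q'$ (cf.\ \cite[Thm.\ 14.5.1]{hua}), one then verifies that $\tilde D$ is row-equivalent to a unique $\langle k\ell, Q\rangle$-admissible matrix $\tilde D^*$, that the product of the Rogers factors for the $A$-piece and the individual $j$-pieces equals the single Rogers factor $\bigl(\frac{\tilde e_1}{Q}\cdots \frac{\tilde e_{\tilde m}}{Q}\bigr)^n$ attached to $\tilde D^*$, and (by a direct change of variables) that the integrands coincide.

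Finally, one reorganizes the resulting expression as a sum indexed by the combined matrix $\tilde D^*$. For each $\langle k\ell, Q\rangle$-admissible $\tilde D^*$, let $B(\tilde D^*) \subseteq \{0, \ldots, \ell-1\}$ denote the set of block indices $j$ satisfying the isolation condition in Theorem \ref{newrogersgen}. A case analysis shows that $\tilde D^*$ arises from the above construction precisely for those subsets $A \subseteq \{1, \ldots, \ell\}$ with $\{j-1 : j \in \{1,\ldots,\ell\} \setminus A\} \subseteq B(\tilde D^*)$, and that the data $(D_A, (D_j))$ is then uniquely determined by extracting each isolated block together with its scaling factor. Summing the signs yields $\sum_{C \subseteq B(\tilde D^*)} (-1)^{|C|}$, which equals $1$ when $B(\tilde D^*) = \emptyset$ and $0$ otherwise. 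Therefore only admissible matrices $\tilde D^*$ with empty isolated-block set survive, each with coefficient $1$, producing the stated formula.

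The main obstacle lies in the middle paragraph: one must verify column by column that the block-assembled matrix $\tilde D$ indeed admits a unique row-permutation into $\langle k\ell, Q\rangle$-admissible form, carefully tracking how the pivots from the $A$-piece and each $j$-piece interleave under the chosen column placement, and checking the echelon-type zero condition for non-pivot columns. In tandem one must establish the bijective correspondence $(A, D_A, (D_j)) \leftrightarrow (\tilde D^*, C)$ with $C = \{j-1 : j \in \{1,\ldots,\ell\}\setminus A\}$, with enough precision that the inclusion-exclusion sign cancellation runs cleanly. Both tasks are elementary but notationally dense; they closely parallel the row-permutation and subset-counting steps in the proof of Theorem \ref{newrogers}.
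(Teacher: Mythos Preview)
Your proposal is correct and follows essentially the same route as the paper's outline: expand the product over subsets $A\subset\{1,\ldots,\ell\}$, apply Rogers' formula to the $A$-piece and to each $\E(S_j)$ for $j\notin A$, assemble the resulting data into a single block matrix over the common denominator $Q=\operatorname{lcm}(q,(q_j))$, observe that this matrix admits a unique row permutation into $\langle k\ell,Q\rangle$-admissible form, and then run the inclusion--exclusion over subsets of the ``isolated block'' set exactly as in Theorem~\ref{newrogers}. The paper's proof is only an outline and leaves the final inclusion--exclusion step implicit (``the rest of the proof follows closely the proof of Theorem~\ref{newrogers}''), whereas you spell it out; one small organizational slip is that the elementary-divisor remark is relevant to matching the Rogers factors $\bigl(\tfrac{e_1}{q}\cdots\bigr)^n$, not to establishing the row-permutation claim, but the content is the same.
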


\begin{proof}[Outline of proof]
Mimicking the beginning of the proof of Theorem \ref{newrogers},
in particular expanding $\mathbb E\big(\prod_{j=1}^{\ell}G_j(L)\big)$ as much as possible using \eqref{rogformula},
we obtain the formula
\begin{align}\notag
\mathbb E\Big(\prod_{j=1}^{\ell}G_j(L)\Big)=
\sum_{A\subset\{1,\ldots,\ell\}}(-1)^{\ell-\# A}
\sum_{\{q_j\}}\sum_{\{D_j\}}\sum_{q=1}^\infty\sum_D
\biggl(\frac{e_1'}{q'}\cdots\frac{e_{m'}'}{q'}\biggr)^n
\hspace{50pt}
\\\label{newrogersgenPF1}
\times\int_{\R^n}\cdots\int_{\R^n}
\prod_{j=0}^{\ell-1} g_{j+1}\Bigl(
\sum_{i=1}^{m'}\frac{d_{i,jk+1}'}{q'}\vecx_i,\cdots,\sum_{i=1}^{m'}\frac{d_{i,jk+k}'}{q'}\vecx_i\Bigr)
\,d\vecx_1\cdots d\vecx_{m'},
\end{align}
where the notation is as follows.
As before, $a=\#A$ and $A=\{j_1,\ldots,j_a\}$ with $j_1<\cdots<j_a$.
In the sums,
$\{q_j\}$ and $\{D_j\}$
are short-hands for
$\{q_j\}_{j\in A^c}$ and $\{D_j\}_{j\in A^c}$,
where $A^c$ 
is the complement of $A$ in $\{1,\ldots,\ell\}$;
and $\{q_j\}$ runs through all $(\ell-a)$-tuples of positive integers 
while $\{D_j\}$ runs through all $(\ell-a)$-tuples of matrices such that
$D_j$ is $\langle k,q_j\rangle$-admissible for each $j\in A^c$.
In the innermost sum, $D$ runs through all $\langle ak,q\rangle$-admissible matrices.
For any $A,\{q_j\},\{D_j\},q,D$ appearing in the multiple sum
we let $q'$ be the least common multiple of $q$ and all the $q_j$'s,
and set $m'=m+\sum_{j\in A^c}m_j$, where $m$ is the number of rows of $D$ and 
$m_j$ is the number of rows of $D_j$.
Writing also $D_j=(d_{uv}^{(j)})$, $D=(d_{uv})$
and $\overline m_j:=m+\sum_{\substack{j'\in A^c\\ j'<j}}m_{j'}$,
we define $D'=D'(A,\{q_j\},\{D_j\},q,D)=(d_{ij}')$ to be the $m'\times k\ell$ matrix which has
$d'_{i,(j_u-1)k+v}={\displaystyle \frac{q'}q d_{i,(u-1)k+v}}$ for all 
$i\in\{1,\ldots,m\}$, $u\in\{1,\ldots,a\}$, $v\in\{1,\ldots,k\}$, 
and $d_{\overline m_j+i,(j-1)k+v}={\displaystyle \frac{q'}{q_j}d_{iv}^{(j)}}$ for all
$j\in A^c$, $v\in\{1,\ldots,k\}$, $i\in\{1,\ldots,m_j\}$,
and all other entries equal to zero.
Finally $e_j'=(\ve_j',q)$, where $\ve_1',\ldots,\ve_{m'}'$ are the elementary divisors of $D'$.
This completes the description of the notation in \eqref{newrogersgenPF1}.

One notes that each matrix $D'$ which appears above can be brought, by a unique row permutation,
into a $\langle k\ell,q'\rangle$-admissible matrix.
The rest of the proof follows closely the proof of Theorem \ref{newrogers}.
\end{proof}

\section{Proofs of Theorem \ref{FIRSTMAINTHEOREM}' and Theorem \ref{Poissoncor}}\label{proofsection}

Our first goal is to prove Theorem \ref{FIRSTMAINTHEOREM}' (and thus also Theorem \ref{FIRSTMAINTHEOREM}).
Let $f,S_n$ and $Z_n$ be as in the statement of the theorem.
Thus $f:\Z^+\to \R^+$ is a function satisfying $\lim_{n\to\infty}f(n)=\infty$
and $f(n)=O_\epsilon(e^{\epsilon n})$ for every $\ve>0$;
for each $n$, $S_n$ is a Borel measurable subset of $\R^n$ which has volume $f(n)$ and which is symmetric about the origin
(viz., $-S_n=S_n$), and finally
\begin{align}\label{ZNDEFrep}
Z_{n}:=\frac{\#(L\cap S_n\setminus\{\bn\})-f(n)}{\sqrt{2f(n)}},
\end{align}
with $L$ picked at random in $(X_n,\mu_n)$.
It follows from Siegel's formula \cite{siegel} that for each $n\geq2$ we have
$\mathbb E\bigl(\#(L\cap S_n\setminus\{\bn\})\bigr)=f(n)$ and thus $\E(Z_n)=0$.
Using Theorem~\ref{newrogers}, we now determine the limits as $n\to\infty$ of the higher moments of $Z_n$.

\begin{prop}\label{MOMENTPROP}
For any fixed $k\in\Z^+$,
\begin{align*}
\lim_{n\to\infty}\mathbb E\big(Z_n^{\:k}\big)=
\begin{cases}
0 & \text{ if $k$ is odd,}\\
(k-1)!! & \text{ if $k$ is even.}
\end{cases}
\end{align*}
\end{prop}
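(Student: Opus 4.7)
The plan is to apply Theorem~\ref{newrogers} with the choice $f_1=\cdots=f_k=\mathbf{1}_{S_n}/\sqrt{2f(n)}$. Because $\int f_j=\sqrt{f(n)/2}$, the centered quantity $F_j(L)$ of Theorem~\ref{newrogers} equals $Z_n$ for every $j$, so $\mathbb E(Z_n^{\,k})=\mathbb E(\prod_{j=1}^k F_j(L))$ expands as a sum indexed by pairs $\langle q,D\rangle$, where $D$ runs over $\langle k,q\rangle$-admissible matrices whose every row has at least two non-zero entries. A generic summand equals
\[
\Big(\frac{e_1\cdots e_m}{q^m}\Big)^{n}(2f(n))^{-k/2}\,\vol\Big(\Big\{(\vecx_1,\ldots,\vecx_m): \textstyle\sum_i\frac{d_{ij}}{q}\vecx_i \in S_n\ \forall j\Big\}\Big).
\]

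Next I would isolate the main contribution, which comes from what I will call the \emph{pure matching} matrices: those with $q=1$, $m=k/2$ (which forces $k$ even), and with each row containing exactly two non-zero entries---the forced $1$ in a $\nu$-column plus a single $\pm 1$ in a $\mu$-column. Such matrices correspond bijectively to perfect matchings of $\{1,\ldots,k\}$ (the smaller element of each pair becoming a $\nu$-index, the larger a $\mu$-index) together with an independent sign $\pm 1$ on each pair, giving $(k-1)!!\cdot 2^{k/2}$ of them in total. Using $S_n=-S_n$, each pair constraint becomes $\pm\vecx_i\in S_n$ and is hence automatic, so the volume factor above equals $f(n)^{k/2}$ and each summand contributes $2^{-k/2}$. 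Summing yields precisely $(k-1)!!$, the sought even Gaussian moment; for odd $k$ no pure matching exists and this main contribution is $0$.

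It remains to show that every other summand is negligible as $n\to\infty$. The remaining admissible matrices split into four types, each of which I would treat separately: (i) those with $q\geq 2$, where the prefactor $(e_1\cdots e_m/q^m)^n\leq q^{-n}$ produces exponential decay that beats the sub-exponential growth of $f(n)^{k/2}=O_\ve(e^{\ve k n/2})$; (ii) those with $q=1$ and $m<k/2$, where the trivial bound $f(n)^m$ on the volume factor makes the summand $O(f(n)^{m-k/2})\to 0$; (iii) those containing an entry with $|d_{ij}|\geq 2$, where a change of variables in the corresponding column introduces a factor $|d_{ij}|^{-n}$ providing the necessary decay once one sums over that integer parameter; and (iv) those with $q=1$, $m=k/2$, all entries in $\{-1,0,1\}$, yet failing to be pure matchings---equivalently, some $\mu$-column carries at least two non-zero entries, which imposes a non-trivial hyperedge constraint of the form $\sum_\ell\epsilon_\ell\vecx_{i_\ell}\in S_n$ with $\epsilon_\ell\in\{\pm 1\}$ and at least two summands. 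When $k$ is odd no matrix can be a pure matching, so every non-negligible admissible matrix belongs to type (ii) or (iv).

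The hard part will be case (iv): showing that each such hyperedge constraint cuts the trivial volume bound $f(n)^{k/2}$ down by an exponential factor $\alpha_D^n$ with $\alpha_D<1$, uniformly over the (essentially arbitrary) symmetric set $S_n$. To obtain such a bound I would appeal to the Brunn--Minkowski inequality, valid for any measurable set, which gives $|S_n+S_n|\geq 2^n f(n)$, combined with a rearrangement/concentration argument on the autocorrelation $\mathbf{1}_{S_n}\ast\mathbf{1}_{S_n}$ in order to control integrals of the form $\vol(\{(\vecx_1,\ldots,\vecx_r)\in S_n^{\,r}: \sum\epsilon_\ell\vecx_\ell\in S_n\})\leq\alpha^n f(n)^r$. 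Since the number of structural types of $\langle k,q\rangle$-admissible matrices is finite for each fixed $k$, and since the hypothesis $f(n)=O_\ve(e^{\ve n})$ permits $\ve$ to be chosen small enough that $\alpha^n f(n)^{k/2}/(2f(n))^{k/2}=O((\alpha e^{k\ve/2})^n)\to 0$, all error contributions collectively vanish in the limit, giving $\mathbb E(Z_n^{\,k})\to (k-1)!!$ for even $k$ and $\mathbb E(Z_n^{\,k})\to 0$ for odd $k$.
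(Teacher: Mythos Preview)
Your overall strategy---apply Theorem~\ref{newrogers}, identify the leading ``matching'' terms, and show everything else is negligible---matches the paper's.  However, your case split (i)--(iv) has a genuine gap: you have not covered the matrices with $q=1$, all entries in $\{-1,0,1\}$, and $m>k/2$.  Such matrices certainly exist; for instance, with $k\geq3$ the $(k-1)\times k$ matrix
\[
D=\begin{pmatrix}1&0&\cdots&0&\pm1\\0&1&\cdots&0&\pm1\\\vdots&\vdots&\ddots&\vdots&\vdots\\0&0&\cdots&1&\pm1\end{pmatrix}
\]
is $\langle k,1\rangle$-admissible, has every row with two non-zero entries, every entry in $\{-1,0,1\}$, and $m=k-1>k/2$.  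Your sentence ``When $k$ is odd \ldots\ every non-negligible admissible matrix belongs to type (ii) or (iv)'' cannot be right, since type~(iv) requires $m=k/2$, a non-integer.  Far from being a harmless omission, these large-$m$ matrices are in fact the dominant error terms: in Section~\ref{expgrowthsec} the paper shows that precisely the matrices above determine the threshold $c_k$ beyond which the $k$th moment diverges.  So the case you left out is the heart of the estimate.

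A second issue is your treatment of case~(iv) itself.  Brunn--Minkowski gives a \emph{lower} bound on $\vol(S_n+S_n)$, not an upper bound on $\vol\{(\vecx_1,\ldots,\vecx_r)\in S_n^{\,r}:\sum\epsilon_\ell\vecx_\ell\in S_n\}$; knowing the support of the autocorrelation is large does not by itself force its integral over $S_n$ to be small.  What is actually needed is a Riesz-type rearrangement inequality reducing to the case of balls (this is exactly what the paper uses in Lemma~\ref{BASICVBOUNDlem} via Rogers' single integral inequality~\cite{rogers4}).  The paper's proof of Proposition~\ref{MOMENTPROP} sidesteps both difficulties by not attempting a case analysis at all: it simply invokes the uniform estimate $R_{k,n}\ll(3/4)^{n/2}f(n)^{k-1}$ from Rogers~\cite{rogers2,rogers3}, which already packages the rearrangement argument and handles every admissible $D$ with $m\leq k-1$ at once.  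I would recommend you do the same rather than attempt to rederive those bounds term by term.
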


\begin{proof}
Let $\chi_{n}$ be the characteristic function of $S_n$.
For any $n>k$, Theorem \ref{newrogers} gives
\begin{align} \label{Zmoments}
\mathbb E\big(Z_n^{\:k}\big)
&=\frac1{(2f(n))^{k/2}}\,\mathbb E\bigg(\bigg(
\sum_{\vecm\in L\setminus\{\vec0\}}\chi_n(\vecm)-\int_{\R^n}\chi_n(\vecx)\,d\vecx\bigg)^{\hspace{-3pt} k}\,\,\biggr)
\\\notag
&=\frac1{(2f(n))^{k/2}}\sum_{q=1}^{\infty}{\sum_{D}}'\Big(\frac{e_1}{q}\cdots\frac{e_m}{q}\Big)^n
\int_{\R^n}\cdots\int_{\R^n}\prod_{j=1}^k\chi_n\Big(\sum_{i=1}^m\frac{d_{ij}}{q}\vecx_i\Big)\,d\vecx_1\ldots d\vecx_m.
\end{align}
We let
\begin{align}\label{Mkndef}
M_{k,n}:={\sum_{D}}''\int_{\R^n}\cdots\int_{\R^n}\prod_{j=1}^k\chi_n\Big(\sum_{i=1}^m d_{ij}\vecx_i\Big)\,d\vecx_1\ldots d\vecx_m,
\end{align}
where the sum is taken over all $\langle k,1\rangle$-admissible matrices $D$ 
having entries $d_{ij}\in\{0,\pm1\}$,
with at least two non-zero entries in each row and exactly one non-zero entry in each column. 
Let $R_{k,n}$ be the sum of all the terms in \eqref{Zmoments} that are not accounted for in $M_{k,n}$,
so that 
\begin{align}\label{Zmoments2}
\mathbb E\big(Z_n^{\:k}\big)&=(2f(n))^{-k/2}\big(M_{k,n}+R_{k,n}\big).
\end{align}

Now, let $\mathcal P'(k)$ denote the set of partitions of $\{1,\ldots,k\}$ containing no singleton sets. 
Using $S_n=-S_n$ and $\vol(S_n)=f(n)$,
and then \cite[Lemma 3]{poisson}, we have
\begin{align}\label{mainterm}
M_{k,n}={\sum_{D}}''f(n)^m=\sum_{P\in\mathcal P'(k)}2^{k-\#P}f(n)^{\#P}.
\end{align}
It remains to bound 
the term $R_{k,n}$ in \eqref{Zmoments2}. 
The summation condition in $\sum'_D$ 
implies that all matrices $D$ appearing in $R_{k,n}$ have at most $k-1$ rows. Hence, an easy modification of the arguments in \cite[Sect.\ 9]{rogers2} and \cite[Sect.\ 4]{rogers3} (see also \cite[Sect.\ ~3]{poisson}) gives that,
for $n$ sufficiently large, 
\begin{align}\label{remainder}
0\leq R_{k,n}\ll \Big(\frac34\Big)^{n/2}f(n)^{k-1},
\end{align}
where the implied constant depends on $k$ but not on $n$. If $k$ is odd, then we may assume that $k\geq3$ and in this situation we have $\#P\leq(k-1)/2$ for every $P\in\mathcal P'(k)$. 
Recall that we are assuming $f(n)=O_\ve(e^{\ve n})$.
Hence it follows from \eqref{Zmoments2}, \eqref{mainterm} and \eqref{remainder} that, 
for any odd $k\geq3$,
\begin{align*}
\lim_{n\to\infty}\mathbb E\big(Z_n^{\:k}\big)=0.
\end{align*}
On the other hand, if $k$ is even, 
then  \eqref{Zmoments2}, \eqref{mainterm} and \eqref{remainder} imply that 
\begin{align*}
\lim_{n\to\infty}\mathbb E\big(Z_n^{\:k}\big)=\#\left\{P\in\mathcal P'(k): \#B=2, \forall B\in P\right\}=(k-1)!!.
\end{align*}
This completes the proof of the proposition.
\end{proof}

\begin{remark}\label{VARIANCEremark}
Note that the variance of $Z_n$ can be controlled for a much larger class of functions $f$. Indeed,
for \textit{any} $f:\Z^+\to \R^+$ satisfying $\lim_{n\to\infty}f(n)=\infty$, we have 
\begin{align*}
\text{Var}(Z_{n})=1+O\big(\big(\tfrac34\big)^{n/2}\big)\qquad
\text{as }\: n\to\infty.
\end{align*}
Cf.\ \eqref{Zmoments2} and \eqref{remainder} and note that $k-1=k/2=1$ for $k=2$.
\end{remark}

\begin{proof}[Proof of Theorem \ref{FIRSTMAINTHEOREM}'] 
The desired convergence follows immediately from Proposition \ref{MOMENTPROP}.
\end{proof}

\begin{proof}[Proof of Theorem \ref{Poissoncor}]
Let $\ve>0$ be given. It follows from Theorem \ref{FIRSTMAINTHEOREM} that
there exist $x_0>0$ and $n_0\in\Z^+$ such that
for all $n\geq n_0$, $x\in[x_0,f(n)]$ and $r\in\R$,
\begin{align}\label{Poissoncorpf4}
\biggl|\Prob\biggl(\frac{N_{n,L}(x)-x}{\sqrt{2x}}\leq r\biggr)-\frac1{\sqrt{2\pi}}\int_{-\infty}^r e^{-t^2/2}\,dt\biggr|
<\frac{\ve}2.
\end{align}
(Indeed, otherwise there is a sequence of positive integers $n_1<n_2<\cdots$ and positive numbers
$x_1,x_2,\ldots$ with $x_j\leq f(n_j)$ and $\lim_{j\to\infty} x_j=\infty$,
such that for each $j$, \eqref{Poissoncorpf4} fails for $n=n_j$, $x=x_j$ and some $r=r_j\in\R$.
We then obtain a contradiction against Theorem \ref{FIRSTMAINTHEOREM} applied to the function
$f_1:\Z^+\to\R^+$ given by $f_1(n_j)=x_j$ and, say, $f_1(n)=f(n)$ for $n\notin\{n_1,n_2,\ldots\}$.)
Using also the fact that $\frac{2\scrN(x)-x}{\sqrt{2x}}$ tends in distribution to $N(0,1)$,
and taking $r=\frac{2N-x}{\sqrt{2x}}$,
it follows that after possibly increasing $x_0$, we have
\begin{align}\label{Poissoncorpf1}
\Bigl|\Prob(N_{n,L}(x)\leq 2N)-\Prob(\scrN(x)\leq N)\Bigr|<\ve
\end{align}
for all $n\geq n_0$, $x\in[x_0,f(n)]$, $N\geq0$.
On the other hand it follows from Theorem \ref{POISSONTHEOREM} (or \cite[Thm.\ 3]{rogers3})
that, after possibly increasing $n_0$, 
\eqref{Poissoncorpf1} also holds for all $n\geq n_0$, $x\in[0,x_0]$, $N\geq0$.

Hence we have proved that \eqref{PoissoncorRES} holds uniformly with respect to all $N\geq0$ and $0\leq x\leq f(n)$.
The extension to the remaining case, i.e.\ $x>f(n)$ and $N\leq f(n)$, is now straightforward:
Applying what we have already proved to the function $n\mapsto 4f(n)$, 
it follows that the convergence in
\eqref{PoissoncorRES} holds uniformly with respect to all $N\geq0$ and $0\leq x\leq 4f(n)$;
thus it only remains to consider the case when $x>4f(n)$ and $N\leq f(n)$.
However, for such $x$ and $N$, we have
\begin{align}\label{PoissoncorPF1}
\Prob_{\mu_n}(N_{n,L}(x)\leq 2N)\leq\Prob_{\mu_n}(N_{n,L}(4f(n))\leq 2f(n))
\end{align}
and 
\begin{align}\label{PoissoncorPF2}
\Prob(\scrN(x)\leq N)\leq\Prob(\scrN(4f(n))\leq f(n)).
\end{align}
Here the right-hand side of \eqref{PoissoncorPF2} tends to zero as $n\to\infty$,
and so by the convergence already established also the 
right-hand side of \eqref{PoissoncorPF1} tends to zero.
Hence also the left-hand sides of \eqref{PoissoncorPF1} and \eqref{PoissoncorPF2}
tend to zero as $n\to\infty$,
uniformly over all $x>4f(n)$ and $N\leq f(n)$.
This concludes the proof.
\end{proof}

\section{Joint distribution for families of subsets, and proof of Theorem \ref{brownian}} 
\label{brownianproofsec}

Our main goal in this section is to prove Theorem \ref{brownian}.
As a first step, we generalize Proposition~\ref{MOMENTPROP} and Theorem \ref{FIRSTMAINTHEOREM}' to finite
families of disjoint subsets of $\R^n$.
Specifically, let us again fix a function $f:\Z^+\to\R^+$
satisfying $\lim_{n\to\infty}f(n)=\infty$
and $f(n)=O_\epsilon(e^{\epsilon n})$ for every $\ve>0$.
Fix a positive integer $r$ and positive real numbers $c_1,\ldots,c_r$.
For each $n$, let $S_{1,n},\ldots,S_{r,n}$ 
be Borel measurable subsets of $\R^n$ satisfying
$\vol(S_{j,n})=c_jf(n)$, $-S_{j,n}=S_{j,n}$, and $S_{j,n}\cap S_{j',n}=\emptyset$ for all $j\neq j'$.
In analogy with \eqref{ZNDEFrep} we set
\begin{align}\label{ZJNDEF}
Z_{j,n}:=\frac{\#(L\cap S_{j,n}\setminus\{\bn\})-c_jf(n)}{\sqrt{2f(n)}},
\end{align}
with $L$ picked at random in $(X_n,\mu_n)$.
\begin{prop}\label{MOMENTPROP2}
In this situation, for any fixed $\veck=(k_1,\ldots,k_r)\in\Z_{\geq0}^r$,
\begin{multline*}
\lim_{n\to\infty}\mathbb E\Big(Z_{1,n}^{\:k_1}\cdots Z_{r,n}^{\:k_r}\Big)
=\begin{cases}
\prod_{j=1}^r\bigl(c_j^{k_j/2}(k_j-1)!!\bigr) & \text{ if $k_1,\ldots,k_r$ are all even,}\\
0 & \text{ otherwise.}
\end{cases}
\end{multline*}
\end{prop}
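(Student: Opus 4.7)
The plan is to follow the proof of Proposition \ref{MOMENTPROP} almost verbatim, modifying only the combinatorics to track which subset each ``column'' belongs to. Set $k=k_1+\cdots+k_r$, and apply Theorem \ref{newrogers} to the ordered tuple of characteristic functions in which $\chi_{j,n}:=\mathbf{1}_{S_{j,n}}$ appears in positions $k_1+\cdots+k_{j-1}+1,\ldots,k_1+\cdots+k_j$; writing $j(\ell)\in\{1,\ldots,r\}$ for the subset index assigned to column $\ell$, this expresses
\begin{align*}
\E\bigl(Z_{1,n}^{\:k_1}\cdots Z_{r,n}^{\:k_r}\bigr)
= \frac{1}{(2f(n))^{k/2}}\bigl(M_{\veck,n}+R_{\veck,n}\bigr),
\end{align*}
where $M_{\veck,n}$ collects the contribution of admissible matrices $D$ with $q=1$, entries in $\{0,\pm1\}$, exactly one nonzero entry per column, and at least two per row, and $R_{\veck,n}$ collects the rest.

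The remainder bound $|R_{\veck,n}|\ll(3/4)^{n/2}f(n)^{k-1}$ transfers essentially unchanged from the proof of Proposition \ref{MOMENTPROP} (the only effect of having $r$ different subsets is that $\vol(S_{j,n})\le(\max_j c_j)f(n)$, absorbed into the implicit constant), and hence this term vanishes after dividing by $f(n)^{k/2}$ by the subexponential growth of $f$. For the main term, each admissible matrix determines a partition $P$ of $\{1,\ldots,k\}$ into blocks of size $\geq 2$, the block through column $\ell$ being the set of columns with nonzero entry in the same row; collapsing sign choices via $S_{j,n}=-S_{j,n}$ produces a factor $2^{k-\#P}$, while the $m=\#P$-fold integral factorises into one volume per block, with the block $B$ contributing $\vol\bigl(\bigcap_{\ell\in B}S_{j(\ell),n}\bigr)$. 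The only genuinely new ingredient is the observation that pairwise disjointness of the $S_{j,n}$ forces this volume to vanish unless $B$ is \emph{monochromatic}, i.e.\ all $\ell\in B$ share the same value of $j(\ell)$, in which case it equals $c_{j(B)}f(n)$.

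Consequently $M_{\veck,n}=\sum_{P}2^{k-\#P}\prod_{B\in P}\bigl(c_{j(B)}f(n)\bigr)$, with $P$ ranging over monochromatic partitions of $\{1,\ldots,k\}$ into blocks of size $\geq2$. After dividing by $(2f(n))^{k/2}$ and letting $n\to\infty$, only partitions with $\#P=k/2$ survive, forcing every block to be a pair; a monochromatic pair-partition exists if and only if every $k_j$ is even, in which case the number of such pair-partitions is $\prod_j(k_j-1)!!$ and the product of volumes is $\prod_j(c_jf(n))^{k_j/2}$. Collecting everything gives the stated limit $\prod_{j=1}^{r}c_j^{k_j/2}(k_j-1)!!$, while if any $k_j$ is odd the limit is zero. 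The main obstacle is purely combinatorial---verifying that disjointness of the subsets forces monochromaticity of the surviving partitions and counting the resulting pair-partitions---no new analytic estimate beyond those already used in Proposition \ref{MOMENTPROP} is required.
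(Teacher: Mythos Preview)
Your proof is correct and follows essentially the same approach as the paper's own proof: apply Theorem \ref{newrogers}, split into the main term $\widetilde M_{\veck,n}$ coming from $q=1$ matrices with entries in $\{0,\pm1\}$ and exactly one nonzero entry per column, bound the remainder as in Proposition \ref{MOMENTPROP}, and use disjointness of the $S_{j,n}$ to force each block (row) to be monochromatic. In fact you spell out the final combinatorics more explicitly than the paper, which simply states that ``the rest of the proof follows closely that of Proposition \ref{MOMENTPROP}.''
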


\begin{proof}
Set $\widehat k=k_1+\cdots+k_r$. 
Let $\chi_{j,n}$ be the characteristic function of $S_{j,n}$.
For any $n>\widehat k$, Theorem \ref{newrogers} gives
\begin{multline}\label{ZTILDECALCULATION}
\mathbb E\Big(Z_{1,n}^{\:k_1}\cdots Z_{r,n}^{\:k_r}\Big)=(2f(n))^{-\widehat k/2}\sum_{q=1}^{\infty}{\sum_{D}}'\Big(\frac{e_1}{q}\cdots\frac{e_m}{q}\Big)^n\\
\hspace{12pt}\times\int_{\R^n}\cdots\int_{\R^n}\prod_{j=1}^{r}\prod_{\ell_j=1}^{k_j}\chi_{j,n}\Big(\sum_{i=1}^m\frac{d_{i,k_1+\cdots+k_{j-1}+\ell_j}}{q}\vecx_i\Big)\,d\vecx_1\ldots d\vecx_m,
\end{multline}
where the sum over $D=(d_{ij})$ runs through all $\langle\widehat k,q\rangle$-admissible matrices
with the property that there are at least two non-zero entries in each row.
As in the proof of Proposition \ref{MOMENTPROP}, we divide the right-hand side into two parts as
\begin{align*}
\mathbb E\Big(Z_{1,n}^{\:k_1}\cdots Z_{r,n}^{\:k_r}\Big)=(2f(n))^{-\widehat k/2}\big(\widetilde M_{\veck,n}+\widetilde R_{\veck,n}\big),
\end{align*}
where
\begin{align}\label{TILDEM}
\widetilde M_{\veck,n}:={\sum_{D}}''\int_{\R^n}\cdots\int_{\R^n}\prod_{j=1}^{r}\prod_{\ell_j=1}^{k_j}\chi_{j,n}\Big(\sum_{i=1}^md_{i,k_1+\cdots+k_{j-1}+\ell_j}\,\vecx_i\Big)\,d\vecx_1\ldots d\vecx_m,
\end{align}
the sum being taken over all $\langle\widehat k,1\rangle$-admissible matrices having entries $d_{ij}\in\{0,\pm1\}$,
with at least two non-zero entries in each row and exactly one non-zero entry in each column. 
Using the assumption that $S_{1,n},\ldots,S_{r,n}$ are pairwise disjoint it follows that the terms in the right-hand side of \eqref{TILDEM} are zero unless, for each $i\in\{1,\ldots,m\}$, there is some $j\in\{1,\ldots,r\}$ such that the $i$th row of $D$ has all its non-zero elements in columns corresponding to the fixed function $\chi_{j,n}$. The rest of the proof follows closely that of Proposition \ref{MOMENTPROP}. 
\end{proof}

Note that Proposition \ref{MOMENTPROP2} immediately implies the following theorem,
generalizing Theorem \ref{FIRSTMAINTHEOREM}'.
\begin{thm}\label{independentgaussians}
Fix $r\in\Z^+$, $c_1,\ldots,c_r>0$, and a function
$f:\Z^+\to\R^+$ satisfying $\lim_{n\to\infty}f(n)=\infty$
and $f(n)=O_\epsilon(e^{\epsilon n})$ for every $\ve>0$.
For each $n$, let $S_{1,n},\ldots,S_{r,n}$ be Borel measurable subsets of $\R^n$ 
which are pairwise disjoint, and which satisfy $\vol(S_{j,n})=c_jf(n)$ and $-S_{j,n}=S_{j,n}$.
Set
\begin{align}\notag
Z_{j,n}:=\frac{\#(L\cap S_{j,n}\setminus\{\bn\})-c_jf(n)}{\sqrt{2f(n)}},
\end{align}
with $L$ picked at random in $(X_n,\mu_n)$.
Then
\begin{align*}
\bigl(Z_{1,n},\ldots,Z_{r,n}\bigr)\xrightarrow[]{\textup{ d }}\big(N(0,c_1),N(0,c_2),\ldots,N(0,c_r)\big)\qquad\text{as }\: n\to\infty,
\end{align*}
where the random vector in the right-hand side has independent coordinates.
\end{thm}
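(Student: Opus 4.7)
The plan is to apply the method of moments, leveraging the mixed-moment computation already established in Proposition \ref{MOMENTPROP2}. The first step is to identify the limiting moments given there as the mixed moments of the target vector $(Y_1,\ldots,Y_r)$ whose components $Y_j \sim N(0,c_j)$ are independent. By independence,
\begin{align*}
\E\Bigl(\prod_{j=1}^r Y_j^{k_j}\Bigr)=\prod_{j=1}^r \E(Y_j^{k_j}),
\end{align*}
and the single-variable Gaussian moment satisfies $\E(Y_j^{k_j})=c_j^{k_j/2}(k_j-1)!!$ when $k_j$ is even and $\E(Y_j^{k_j})=0$ when $k_j$ is odd. Taking the product matches Proposition \ref{MOMENTPROP2} term by term for every $\veck \in \Z_{\geq 0}^r$.

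Second, I would upgrade this mixed-moment convergence to convergence in distribution via the Cram\'er--Wold device. For any fixed $(a_1,\ldots,a_r)\in\R^r$, the multinomial expansion of $(a_1 Z_{1,n}+\cdots+a_r Z_{r,n})^k$ is a finite $\R$-linear combination of mixed moments of the form $\E(Z_{1,n}^{k_1}\cdots Z_{r,n}^{k_r})$ with $k_1+\cdots+k_r=k$. Applying Proposition \ref{MOMENTPROP2} to each term and re-summing, one sees that
\begin{align*}
\lim_{n\to\infty}\E\bigl((a_1 Z_{1,n}+\cdots+a_r Z_{r,n})^k\bigr)=\E\bigl((a_1 Y_1+\cdots+a_r Y_r)^k\bigr),
\end{align*}
the right-hand side being the $k$th moment of $N(0,\sum_{j=1}^r a_j^2 c_j)$. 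Since the Gaussian distribution is determined by its moments (Carleman's criterion is trivially satisfied because $c_j^{k_j/2}(k_j-1)!!$ grows only factorially), the one-dimensional method of moments yields $a_1 Z_{1,n}+\cdots+a_r Z_{r,n}\xrightarrow[]{\textup{ d }}N(0,\sum_j a_j^2 c_j)$. Because this holds for every linear functional, Cram\'er--Wold delivers the joint convergence $(Z_{1,n},\ldots,Z_{r,n})\xrightarrow[]{\textup{ d }}(Y_1,\ldots,Y_r)$, which is the statement of the theorem.

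I do not expect any real obstacle: essentially all of the substantive work is already packaged inside Proposition \ref{MOMENTPROP2} and, upstream, inside the new Rogers-type identity of Theorem \ref{newrogers}. The only point requiring a moment of care is the verification that the limiting moments grow slowly enough to apply the one-dimensional method of moments, which is immediate from the explicit formula $c_j^{k_j/2}(k_j-1)!!$.
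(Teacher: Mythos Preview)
Your proposal is correct and follows the same approach as the paper: the paper simply states that Proposition \ref{MOMENTPROP2} ``immediately implies'' Theorem \ref{independentgaussians}, and your argument is just a spelled-out version of this implication. The only (inessential) difference is that you route the conclusion through Cram\'er--Wold and the one-dimensional method of moments, whereas one could equally well invoke the multivariate method of moments directly, since the multivariate Gaussian is determined by its mixed moments; both routes are standard and amount to the same thing here.
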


We are now in position to complete the proof of Theorem \ref{brownian}.

\begin{proof}[Proof of Theorem \ref{brownian}]
To simplify notation, in this proof we write $\widetilde Z_n(t):=\widetilde Z_n^{(\B)}(t)$.
Given any fixed numbers $0<t_1<t_2<\cdots<t_r\leq1$, by applying Theorem \ref{independentgaussians} 
with $S_{1,n},\ldots,S_{r,n}$ as the annuli
\begin{align*}
S_{j,n}=\biggl\{\vecx\in\R^n\col\Bigl(\frac{t_{j-1}f(n)}{V_n}\Bigr)^{1/n}<|\vecx|
\leq\Bigl(\frac{t_{j}f(n)}{V_n}\Bigr)^{1/n}\biggr\},
\qquad j=1,\ldots,r
\end{align*}
(with $t_0:=0$),
we conclude that the random vector
\begin{align*}
\Bigl(\widetilde Z_{n}(t_1),\widetilde Z_{n}(t_2)-\widetilde Z_{n}(t_1),\ldots,
\widetilde Z_{n}(t_r)-\widetilde Z_{n}(t_{r-1})\Bigr)
\end{align*}
tends in distribution to
\begin{align*}
\big(N(0,t_1),N(0,t_2-t_1),\ldots,N(0,t_r-t_{r-1})\big)
\end{align*}
as $n\to\infty$.
Note also that $\widetilde Z_{n}(0)=0$ by definition.
We have thus proved that the convergence in Theorem \ref{brownian} holds on the level of finite dimensional distributions,
and it now only remains to establish the tightness of the sequence $P_n$ of probability measures on $\mathcal D[0,1]$. 

By \cite[Thm.\ 13.5 and (13.14)]{billingsley2} (applied with $F(t)=C\sqrt t$ and $\beta=1$),
it suffices to prove that there exist $\alpha>\frac12$ and $N\in\N$ such that 
\begin{align}\label{moment condition}
\mathbb E\Big(\big(\widetilde Z_{n}(s)-\widetilde Z_{n}(r)\big)^2\big(\widetilde Z_{n}(t)-\widetilde Z_{n}(s)\big)^2\Big)\ll \big(\sqrt t-\sqrt r\big)^{2\alpha},
\end{align}
uniformly over all $0\leq r\leq s\leq t\leq1$ and $n\geq N$. We begin by noting that Proposition \ref{MOMENTPROP2} implies that 
\begin{equation*}
\lim_{n\to\infty}\mathbb E\Big(\big(\widetilde Z_{n}(s)-\widetilde Z_{n}(r)\big)^{2}\big(\widetilde Z_{n}(t)-\widetilde Z_{n}(s)\big)^{2}\Big)=(t-s)(s-r)\leq(t-r)^2.
\end{equation*} 
Hence, using also the fact that
$t-r\leq2(\sqrt t-\sqrt r)$ for all $0\leq r\leq t\leq 1$,
we see that in the limit of large dimension $n$, \eqref{moment condition} holds with $\alpha=1$.
In order to get a more uniform statement, note that 
by naively modifying Rogers' arguments in \cite[Sect.\ 9]{rogers2} and \cite[Sect.\ 4]{rogers3} as in the proofs of Propositions \ref{MOMENTPROP} and \ref{MOMENTPROP2}, we have
\begin{multline}\label{remainder2}
\mathbb E\Big(\big(\widetilde Z_{n}(s)-\widetilde Z_{n}(r)\big)^{2}\big(\widetilde Z_{n}(t)-\widetilde Z_{n}(s)\big)^{2}\Big)\\
\ll(t-r)^2+\max\left(2^{-n}(t-r)f(n)^{-1},\Big(\frac34\Big)^{n/2}(t-r)^2,\Big(\frac34\Big)^{n/2}(t-r)^3f(n)\right)
%
%
\end{multline}
for all $n\geq6$, where the implied constant is absolute.

The bound \eqref{remainder2} is close but not quite sufficient for our purposes;
the problematic term is $2^{-n}(t-r)f(n)^{-1}$.
This term arises as a bound on the collected contribution of all 
$\langle 4,q\rangle$-admissible matrices $D$ with $m=1$ (and $q$ arbitrary) 
in the expression that is obtained by applying Theorem \ref{newrogers} to the left-hand side of \eqref{remainder2}
(cf.\ \eqref{ZTILDECALCULATION}).
Recall that $m=1$ means that $D$ has only one row.
In order to improve the bound,
note that any such matrix $D=(q,d_1,d_2,d_3)$ gives a contribution
\begin{align}\label{SPECIAL INTEGRAL}
\frac{1}{4q^nf(n)^2}\int_{\R^n}\chi_1\big(V_n|\vecx|^n\big)\chi_1\Big(V_n\Big|\frac{d_{1}}{q}\vecx\Big|^n\Big)\chi_2\Big(V_n\Big|\frac{d_{2}}{q}\vecx\Big|^n\Big)\chi_2\Big(V_n\Big|\frac{d_{3}}{q}\vecx\Big|^n\Big)\,d\vecx
\end{align}
to the left-hand side of \eqref{remainder2},
where $\chi_1$ and $\chi_2$ are the characteristic functions of the open intervals $(rf(n),sf(n))$ and $(sf(n),tf(n))$,
respectively.
Let us temporarily assume that $r>0$. Then, for the integral in \eqref{SPECIAL INTEGRAL} to be non-zero, we must have 
\begin{equation*}
1<\left|\frac{d_2}{q}\right|^n,\left|\frac{d_3}{q}\right|^n<\frac{t}{r}=1+\frac{t-r}{r}.
\end{equation*}
Hence, since $d_2$ and $d_3$ are integers, we conclude that a (crude)
necessary condition for \eqref{SPECIAL INTEGRAL} to be non-zero is
\begin{equation*}
q^n>\frac{r}{t-r}.
\end{equation*}
Let $Q$ be the smallest value of $q\in\Z^+$ satisfying this inequality. 
Then, for $n\geq6$, the estimate \cite[p.\ 246 (line 20)]{rogers2} with $\sum_{q=1}^\infty$ replaced by $\sum_{q=Q}^\infty$ gives
\begin{align}\nonumber
\sum_{q=Q}^{\infty}{\sum_{\substack{D\\(m=1)\hspace{-5pt}}}}'\hspace{7pt}\frac{1}{4q^nf(n)^2}\int_{\R^n}\chi_1\big(V_n|\vecx|^n\big)\chi_1\Big(V_n\Big|\frac{d_{1}}{q}\vecx\Big|^n\Big)\chi_2\Big(V_n\Big|\frac{d_{2}}{q}\vecx\Big|^n\Big)\chi_2\Big(V_n\Big|\frac{d_{3}}{q}\vecx\Big|^n\Big)\,d\vecx\\\label{CAREFUL ROGERS}
\ll Q^{5-n}(t-r)f(n)^{-1}.
\end{align}
Replacing the term 
$2^{-n}(t-r)f(n)^{-1}$ in \eqref{remainder2}
by the bound in \eqref{CAREFUL ROGERS} 
and using $Q\geq\max(1,(r/(t-r))^{1/n})$,
we obtain, allowing now the implied constant to depend on $f$:
\begin{align}\notag
\mathbb E\Big(\big(\widetilde Z_{n}(s)-\widetilde Z_{n}(r)\big)^{2}\big(\widetilde Z_{n}(t)-\widetilde Z_{n}(s)\big)^{2}\Big)
\hspace{90pt}
\\\label{brownianPF2}
\ll(t-r)^2+(t-r)\min\biggl(1,\Bigl(\frac{t-r}r\Bigr)^{1-\frac5n}\biggr)
\hspace{20pt}
\\\notag
\ll (t-r)\min\biggl(1,\Bigl(\frac{t-r}r\Bigr)^{1-\frac5n}\biggr).
\end{align}
This bound is also valid when $r=0$, with the convention that $\min(1,\cdots)$ then equals $1$.

Now fix the constant $\frac12<\alpha<1$ in an arbitrary manner,
and then take $N\geq6$ so large that $1-\alpha-\frac5N>0$.
We then claim that
\begin{align}\label{brownianPF1}
(t-r)\min\biggl(1,\Bigl(\frac{t-r}r\Bigr)^{1-\frac5n}\biggr)
\ll \big(\sqrt t-\sqrt r\big)^{2\alpha},
\end{align}
uniformly over all $n\geq N$ and $0\leq r\leq t\leq 1$.
Indeed, if $t\geq 2r$ then \eqref{brownianPF1} 
is clear from $(\sqrt t-\sqrt r)^{2\alpha}\asymp (\sqrt t)^{2\alpha}=t^\alpha$.
In the remaining case, i.e.\ when $0<r\leq t<2r$,
we have $\sqrt t-\sqrt r\asymp (t-r)/\sqrt r$
and \eqref{brownianPF1} is equivalent to
$t-r\ll r^{(1-\alpha-\frac5n)/(2-2\alpha-\frac 5n)}$,
which is true since $(1-\alpha-\frac5n)/(2-2\alpha-\frac 5n)<1$ and $t<2r$.
This completes the proof of \eqref{brownianPF1},
and in view of \eqref{brownianPF2} we thus obtain \eqref{moment condition},
completing the proof of Theorem \ref{brownian}.
\end{proof}

\section{Moment bounds for exponentially growing volumes} 
\label{expgrowthsec}

Our goal in this section is to prove Theorem \ref{OPTIMALFNGROWTHTHM}.
Thus, for each $n$ we assume given a Borel subset $S_n$ of $\R^n$ satisfying $\vol(S_n)=f(n)$ and $S_n=-S_n$.
Throughout the section we let $\chi_n$ denote the characteristic function of $S_n$.
Our task is to go back to the proof of Proposition \ref{MOMENTPROP} and improve the
bound on $R_{k,n}$, i.e.\ the sum of those terms in \eqref{Zmoments} which 
come from $\langle k,q\rangle$-admissible matrices $D$ with at least two non-zero entries in
each row and such that either $q\geq2$, 
or some column contains more than one non-zero entry,
or some entry has absolute value $|d_{ij}|\geq2$.
It will turn out that the dominating contribution to $R_{k,n}$ comes from $\langle k,1\rangle$-admissible matrices $D$
of the form
\begin{align}\label{WORSTMATRICES}
D=\begin{pmatrix}
1&0&\cdots&0&\pm1
\\
0&1&\cdots&0&\pm1
\\
\vdots&\vdots&\ddots&\vdots&\vdots
\\
0&0&\cdots&1&\pm1
\end{pmatrix}
\qquad(\text{thus }\: m=m(D)=k-1).
\end{align}

\subsection{Auxiliary lemmas}
\label{AUXLEMMASsec}

In our first lemma, 
by repeated use of an integral inequality of Rogers, \cite[Theorem 1]{rogers4},
we bound
$\int_{\R^n}\cdots\int_{\R^n}\prod_{j=1}^k\chi_n\big(\sum_{i=1}^m \frac{d_{ij}}q\vecx_i\big)\,d\vecx_1\ldots d\vecx_m$
from above by a product of integrals of the following form:
\begin{align}\label{Jandef}
J_a^{(n)}[c_1,\ldots,c_a]:=\int_{(\R^n)^a}I\Bigl(|\vecx_i|<1\,(i=1,\ldots,a)
, \: 
\Bigl|\sum_{i=1}^a c_i\vecx_i\Bigr|<1\Bigr)\,d\vecx_1\cdots d\vecx_a.
\end{align}
Here $n\geq a\geq1$ and $c_1,\ldots,c_a\in\R_{>0}$,
and $I(\cdot)$ is the indicator function.
We extend the definition to the case $a=0$ by setting $J_0^{(n)}[\:]:=1$ for all $n$.

Let $D$ be a $\langle k,q\rangle$-admissible matrix of size $m\times k$, having at least two non-zero entries in each row.
Set $r=k-m$, let $(\nu;\mu)=(\nu_1,\ldots,\nu_m;\mu_1,\ldots,\mu_r)$ be as in
Section~\ref{newrogerssection},  
and let $\mu_1',\ldots,\mu_r'$ be an arbitrary permutation of $\mu_1,\ldots,\mu_r$.
For $j=1,\ldots,r$, we set 
\begin{align*}
\oA_j=\bigl\{i\in\{1,\ldots,m\}\col d_{i,\mu_j'}\neq0\bigr\};
\qquad
A_j=\oA_j\setminus(\cup_{\ell<j}\oA_\ell),
\quad\text{and}\quad
a_j=\# A_j.
\end{align*}
Since 
$D$ has at least two non-zero entries in each row,
the sets $A_1,\ldots,A_r$ form a partition of $\{1,\ldots,m\}$, possibly with $A_j=\emptyset$ for some $j$'s.
Hence $\sum_{j=1}^{r} a_j=m$.

\begin{lem}\label{BASICVBOUNDlem}
For $D$ as above,
\begin{align}\label{BASICVBOUNDlemres}
\int_{\R^n}\cdots\int_{\R^n}\prod_{j=1}^k\chi_n\Big(\sum_{i=1}^m \frac{d_{ij}}q\vecx_i\Big)\,d\vecx_1\ldots d\vecx_m
\leq V_n^{-m}f(n)^m 
\prod_{j=1}^rJ^{(n)}_{a_j}\bigl[\bigl(|d_{i,\mu_j'}|/q\bigr)_{i\in A_j}\bigr].
\end{align}
\end{lem}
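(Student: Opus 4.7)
The plan is to exploit the $\langle k,q\rangle$-admissibility of $D$ to separate the integrand, then reduce to indicators of Euclidean balls via Rogers' integral inequality, and finally iteratively factor the resulting unit-ball integral using the combinatorial structure of the partition $A_1,\ldots,A_r$. For the first step, the relations $d_{i,\nu_j}=q\delta_{ij}$ give $\chi_n\bigl(\sum_i \frac{d_{i,\nu_j}}{q}\vecx_i\bigr)=\chi_n(\vecx_j)$, so that the product over all $k$ columns factors as $\prod_{j=1}^m\chi_n(\vecx_j)$ times $\prod_{j=1}^r\chi_n\bigl(\sum_{i\in\oA_j}\frac{d_{i,\mu_j'}}{q}\vecx_i\bigr)$. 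Each remaining factor is an even function of a linear form in $\vecx_1,\ldots,\vecx_m$, so \cite[Theorem 1]{rogers4} yields an upper bound with $\chi_n$ replaced throughout by $\chi_B$, where $B\subset\R^n$ is the origin-centred Euclidean ball of volume $f(n)$. The substitution $\vecx_i=\rho\vecy_i$ with $\rho=(f(n)/V_n)^{1/n}$ then produces the Jacobian $\rho^{mn}=V_n^{-m}f(n)^m$, exactly the desired prefactor, and reduces every $\chi_B$-constraint to a unit-ball constraint; the signs of the coefficients $d_{i,\mu_j'}$ are absorbed by the reflections $\vecy_i\mapsto-\vecy_i$.

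The main content of the proof is then bounding the resulting unit-ball integral by $\prod_{j=1}^r J_{a_j}^{(n)}\bigl[(|d_{i,\mu_j'}|/q)_{i\in A_j}\bigr]$. I will process the $r$ linear constraints in reverse order $j=r,r-1,\ldots,1$. The crucial observation is that, by the very definition $A_j=\oA_j\setminus\bigcup_{\ell<j}\oA_\ell$, no variable $\vecy_i$ with $i\in A_j$ appears in any constraint indexed by $\mu_\ell'$ with $\ell<j$. Consequently, when we reach the $j$-th constraint---all constraints $\mu_\ell'$ with $\ell>j$ having already been absorbed into the prefactor---the variables $\{\vecy_i\}_{i\in A_j}$ appear only in this single remaining linear constraint, together with their own indicators $I(|\vecy_i|<1)$. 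Fixing the remaining variables $\vecy_i$ for $i\in\oA_j\setminus A_j$ and writing the linear form as $\vec w+\sum_{i\in A_j}c_i\vecy_i$ for a shift $\vec w\in\R^n$, one needs to know that
\[
\int_{(\R^n)^{a_j}}\prod_{i\in A_j}I(|\vecy_i|<1)\,I\Bigl(\Bigl|\vec w+\sum_{i\in A_j}c_i\vecy_i\Bigr|<1\Bigr)\,d\vecy_{A_j}\leq J_{a_j}^{(n)}\bigl[(c_i)_{i\in A_j}\bigr]
\]
uniformly in $\vec w$. This allows $J_{a_j}^{(n)}$ to be pulled outside the outer integration, and iterating through $j=r,\ldots,1$ yields the claim.

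The principal obstacle is verifying the translation-invariant form of Rogers' inequality used in the last display: although \cite[Theorem 1]{rogers4} provides a symmetrization bound in a fixed geometry, one needs the same bound to persist when the outermost linear constraint is translated by an arbitrary $\vec w$. I expect this to follow either by applying Rogers' theorem with $\vec w$ entered as a free parameter in an auxiliary integration, or by a direct Brunn--Minkowski-type symmetric rearrangement argument, capturing the standard fact that the overlap of a symmetric body with a translate of another symmetric body is maximized at zero translation. A minor bookkeeping remark: the partition $\{A_j\}$ and the integers $a_j$ depend on the arbitrary ordering $\mu_1',\ldots,\mu_r'$ of the $\mu_j$, so the argument actually produces one upper bound per such ordering; the freedom to choose among these bounds will be valuable in the proof of Theorem \ref{OPTIMALFNGROWTHTHM}.
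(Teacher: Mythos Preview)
Your approach is correct and follows the same skeleton as the paper: exploit admissibility to extract the factors $\chi_n(\vecx_i)$ for $i=1,\ldots,m$, use Rogers' symmetrization \cite[Theorem~1]{rogers4} to pass to balls, and iterate through $j=r,r-1,\ldots,1$ peeling off one $J_{a_j}^{(n)}$ at a time via the partition $A_1,\ldots,A_r$. The only difference is the order in which you apply Rogers' inequality, and this is precisely what manufactures your ``principal obstacle''.

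The paper does \emph{not} symmetrize once globally and then iterate with ball indicators. Instead it iterates first and applies Rogers at each step: with the outer variables $(\vecx_i)_{i\in A_1\cup\cdots\cup A_{j-1}}$ held fixed, the inner integrand over $(\R^n)^{a_j}$ is
\[
\Bigl(\prod_{i\in A_j}\chi_n(\vecx_i)\Bigr)\,\chi_n\Bigl(\sum_{i\in A_j}\tfrac{d_{i,\mu_j'}}{q}\vecx_i+\vecz\Bigr)
\]
for some shift $\vecz\in\R^n$ depending on the outer variables. The key observation is that the spherical symmetrization of $\vecy\mapsto\chi_n(\vecy+\vecz)$ is $\chi_B$ \emph{regardless of} $\vecz$, since $S_n$ and every translate of $S_n$ have the same volume $f(n)$. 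Hence Rogers' theorem, applied at this stage to the $a_j$ variables in $A_j$, simultaneously removes the shift and replaces every $\chi_n$ by $\chi_B$; no separate translation inequality is needed. Your obstacle is thus an artefact of having symmetrized too early.

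That said, even in your ordering the obstacle is harmless: your displayed inequality with the shift $\vecw$ is itself a direct instance of Rogers' theorem applied to ball indicators, since the symmetric decreasing rearrangement of $I(|\vecw+\cdot|<1)$ is $I(|\cdot|<1)$. So your sketch goes through; the paper's ordering is just slightly more economical.
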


\begin{proof}
We express the left-hand side of \eqref{BASICVBOUNDlemres} as an iterated integral in the following way.
For each $j\in\{1,\ldots,r\}$ we write $\vecx^{(j)}:=(\vecx_i)_{i\in A_j}\in(\R^n)^{a_j}$
and $d\vecx^{(j)}:=\prod_{i\in A_j}d\vecx_i$.
(If $a_j=0$ then we understand $(\R^n)^0$ and $d\vecx^{(j)}$ to be the singleton set $\{\vec0\}$
with its unique probability measure.) 
Let $F_r(\vecx^{(1)},\ldots,\vecx^{(r)})$ be the constant function $1$, and set,
iteratively for $j=r,r-1,\ldots,1$,
\begin{align}\notag
F_{j-1}(\vecx^{(1)},\ldots,\vecx^{(j-1)})
\hspace{270pt}
\\\label{ITERATEINT}
:=
\int_{(\R^n)^{a_j}}\biggl(\prod_{i\in A_j}\chi_n(\vecx_i)\biggr)
\chi_n\biggl(\sum_{i=1}^m \frac{d_{i,\mu_j'}}q\vecx_i\biggr)F_j(\vecx^{(1)},\ldots,\vecx^{(j)})\,d\vecx^{(j)}.
\end{align}
Then the left-hand side of \eqref{BASICVBOUNDlemres} equals $F_0$.
(The sum $\sum_{i=1}^m (d_{i,\mu_j'}/q)\vecx_i$ appearing in the right-hand side of \eqref{ITERATEINT} 
is well-defined since $d_{i,\mu_j'}=0$ for all $i\in\{1,\ldots,m\}\setminus(A_1\cup\cdots\cup A_j)$.)

Now let $B$ be the closed ball of volume $f(n)$ centered at the origin in $\R^n$, and let $\chi_B$ be its characteristic
function. Using \eqref{ITERATEINT} and \cite[Theorem 1]{rogers4}, we have
\begin{align*}
F_{j-1}(\vecx^{(1)},\ldots,\vecx^{(j-1)})
&\leq\bigl(\sup F_j\bigr)
\int_{(\R^n)^{a_j}}\biggl(\prod_{i\in A_j}\chi_n(\vecx_i)\biggr)
\chi_n\biggl(\sum_{i=1}^m \frac{d_{i,\mu_j'}}q\vecx_i\biggr)\,d\vecx^{(j)}
\\
&\leq\bigl(\sup F_j\bigr)\int_{(\R^n)^{a_j}}\biggl(\prod_{i\in A_j}\chi_B(\vecx_i)\biggr)
\chi_B\biggl(\sum_{i\in A_j} \frac{d_{i,\mu_j'}}q\vecx_i\biggr)\,d\vecx^{(j)},
\end{align*}
since $\chi_B$ is the spherical symmetrization both of $\chi_n$ and of 
$\vecy\mapsto\chi_n(\vecy+\vecz)$ for any fixed $\vecz\in\R^n$.
Hence, since $B$ has radius $V_n^{-1/n}f(n)^{1/n}$, we conclude
\begin{align*}
\sup F_{j-1} 
\leq V_n^{-a_j}f(n)^{a_j}J^{(n)}_{a_j}\bigl[\bigl(|d_{i,\mu_j'}|/q\bigr)_{i\in A_j}\bigr]\cdot \sup F_j.
\end{align*}
Using this bound 
for $j=1,\ldots,r$,
together with $\sum_{j=1}^{r} a_j=m$, we obtain \eqref{BASICVBOUNDlemres}.
\end{proof}

\vspace{5pt}

We say that a function $F:(\R^n)^m\to\R$ ($1\leq m\leq n$) is $\OO(n)$-invariant if
$F(k\vecx_1,\ldots,k\vecx_m)=F(\vecx_1,\ldots,\vecx_m)$ for all $k\in \OO(n)$.
When this holds, 
we define $\oF:(\R^m)^m\to\R$ through
$\oF(\vecx_1,\ldots,\vecx_m)=F(\iota(\vecx_1),\ldots,\iota(\vecx_m))$, where
$\iota$ is any fixed Euclidean isometry of $\R^m$ into $\R^n$.
Note that $\oF$ is independent of the choice of $\iota$.
Given any $\vecx_1,\ldots,\vecx_m\in\R^m$, we denote by 
$[\vecx_1,\ldots,\vecx_m]$ the volume of the parallelotope in $\R^m$ spanned by $\vecx_1,\ldots,\vecx_m$.
Finally, we write $\omega_n:=nV_n$ for the volume of the $(n-1)$-sphere.

\begin{lem}\label{CHANGEVARlem}
Let $1\leq m\leq n$ and let $F:(\R^n)^m\to\R_{\geq0}$ be a non-negative Borel measurable function which is 
$\OO(n)$-invariant.
Then
\begin{align}\notag
&\int_{(\R^n)^m}F(\vecx_1,\ldots,\vecx_m)\,d\vecx_1\cdots d\vecx_m
\\\label{BETTERASYMPTDISClem3res}
&=\frac{\prod_{j=n-m+1}^n\omega_j}{\prod_{j=1}^m\omega_j}
\int_{(\R^m)^m}\oF(\vecx_1,\ldots,\vecx_m)\,
[\vecx_1,\ldots,\vecx_m]^{n-m}\,d\vecx_1 \ldots d\vecx_m.
\end{align}
\end{lem}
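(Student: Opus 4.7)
The plan is to verify the identity by applying a Gram--Schmidt (QR) decomposition to $m$-tuples of vectors both in $\R^n$ and in $\R^m$, and then using the $\OO(n)$-invariance of $F$ (and the induced $\OO(m)$-invariance of $\oF$) to collapse the ``angular'' part of the integrand on each side to a common integral over upper triangular matrices.

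Concretely, after discarding the measure-zero locus of linearly dependent $m$-tuples, every $(\vecx_1,\ldots,\vecx_m)\in(\R^n)^m$ can be written uniquely as $\vecx_i=\sum_{j=1}^{i}r_{ji}\vecu_j$, with $(\vecu_1,\ldots,\vecu_m)$ an orthonormal $m$-frame in $\R^n$ (i.e.\ an element of the Stiefel manifold $V_m(\R^n)$) and $R=(r_{ji})$ an $m\times m$ upper triangular matrix with $r_{ii}>0$. The classical Jacobian identity associated with this parametrization reads
\begin{align*}
d\vecx_1\cdots d\vecx_m = \biggl(\prod_{i=1}^m r_{ii}^{n-i}\biggr)\, dQ\, dR,
\end{align*}
where $dR=\prod_{j\leq i}dr_{ji}$ and $dQ$ is the $\OO(n)$-invariant volume form on $V_m(\R^n)$ normalized so that $\vol(V_m(\R^n))=\prod_{j=n-m+1}^{n}\omega_j$. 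This formula is standard (it can be proved by induction on $m$ via iterated polar coordinates, or looked up in references on matrix integration) and I would simply invoke it. By $\OO(n)$-invariance of $F$, rotating the frame $(\vecu_1,\ldots,\vecu_m)$ into $(\iota(\vece_1),\ldots,\iota(\vece_m))$ yields $F(\vecx_1,\ldots,\vecx_m)=\oF(R_1,\ldots,R_m)$, where $R_1,\ldots,R_m\in\R^m$ denote the columns of $R$. The $V_m(\R^n)$-integration thus factors out, giving
\begin{align*}
\int_{(\R^n)^m}F\,d\vecx_1\cdots d\vecx_m = \biggl(\prod_{j=n-m+1}^n \omega_j\biggr)\int \oF(R_1,\ldots,R_m)\prod_{i=1}^m r_{ii}^{n-i}\, dR.
\end{align*}

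Applying the same QR decomposition to the right-hand side — now with orthogonal part in $\OO(m)$ in place of $V_m(\R^n)$ — gives the analogous Jacobian factor $\prod_i r_{ii}^{m-i}$, which combines with the parallelotope power $[\vecy_1,\ldots,\vecy_m]^{n-m}=\prod_i r_{ii}^{n-m}$ to produce exactly $\prod_i r_{ii}^{n-i}$. The $\OO(m)$-integration factors out and contributes $\vol(\OO(m))=\prod_{j=1}^m\omega_j$, provided $\oF$ is $\OO(m)$-invariant; this is immediate from the stated fact that $\oF$ is independent of the choice of $\iota$, applied to the family of isometries $\iota\circ k$ as $k$ ranges over $\OO(m)$. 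We thus obtain
\begin{align*}
\int_{(\R^m)^m}\oF(\vecy_1,\ldots,\vecy_m)[\vecy_1,\ldots,\vecy_m]^{n-m}d\vecy_1\cdots d\vecy_m = \biggl(\prod_{j=1}^m \omega_j\biggr)\int \oF(R_1,\ldots,R_m)\prod_{i=1}^m r_{ii}^{n-i}\,dR.
\end{align*}
Dividing the two displayed identities eliminates the common integral over $R$ and yields the lemma.

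The main technical point is the Stiefel Jacobian formula above: although well known, verifying the precise normalization of $dQ$ so that $\vol(V_m(\R^n))=\prod_{j=n-m+1}^n\omega_j$ lands exactly as needed requires a little care. Once that is in place, everything else is straightforward bookkeeping of factors.
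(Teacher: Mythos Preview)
Your proof is correct and follows the same overall strategy as the paper: decompose each $m$-tuple into an orthonormal frame times ``shape'' data, use $\OO(n)$-invariance to integrate out the frame (picking up the Stiefel volume $\prod_{j=n-m+1}^n\omega_j$), and then reverse the process in $\R^m$. The paper carries this out with iterated spherical coordinates (radii $r_j=|\vecx_j|$ and angles $\phi_{i,j}$, citing \cite{angles} for the substitution and Jacobian), whereas you use the upper-triangular factor $R$ from the QR decomposition; the two coordinate systems are related by a routine change of variables, so the difference is cosmetic rather than structural.
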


\begin{proof}
Let $\vece_1,\ldots,\vece_n$ be the standard unit vectors in $\R^n$.
Passing to polar coordinates and then performing the same substitution as in \cite[p.\ 754]{angles},
the left-hand side of \eqref{BETTERASYMPTDISClem3res} becomes
\begin{align*}
\Bigl(\prod_{j=n-m+1}^n\omega_j\Bigr)
\int_{(\R_{>0})^m}\int_{(0,\pi)^{M}}F(\vecx_1,\ldots,\vecx_m)
\prod_{1\leq i<j\leq m}(\sin\phi_{i,j})^{n-i-1} \prod_{j=1}^m r_j^{n-1}\,d\vecphi\,d\vecr,
\end{align*}
where $M=\binom m2$,
$\vecr=(r_1,\ldots,r_m)$,  $\vecphi=(\phi_{i,j})_{1\leq i<j\leq m}$,
and $d\vecr$ and $d\vecphi$ denote Lebesgue measure on $\R^m$ and $\R^M$, respectively,
and 
\begin{align}\label{BETTERASYMPTDISClem1res}
\vecx_j=r_j\biggl(\sum_{1\leq i<j}\Bigl(\prod_{i'<i}\sin\phi_{i',j}\Bigr)(\cos\phi_{i,j})\vece_i
+\Bigl(\prod_{i'<j}\sin\phi_{i',j}\Bigr)\vece_j\biggr).
\end{align}
(In particular $\vecx_1=r_1\vece_1$.)
We view $\R^m$ as a subspace of $\R^n$ through $(x_1,\ldots,x_m)\mapsto(x_1,\ldots,x_m,0,\ldots,0)$.
Then all the $\vecx_j$ in \eqref{BETTERASYMPTDISClem1res} lie in $\R^m$.
The desired formula 
now follows by performing the same substitutions backwards, in $\R^m$ instead of in $\R^n$,
and using $(\prod_{j=1}^m r_j)\prod_{i<j}\sin\phi_{i,j}=[\vecx_1,\ldots,\vecx_m]$.
\end{proof}

\vspace{5pt}

Applying Lemma \ref{CHANGEVARlem} to the integral in \eqref{Jandef}, we see that the 
asymptotics of $J_a^{(n)}[c_1,\ldots,c_a]$ as $n\to\infty$ depends mainly on the quantity
\begin{align}\notag
\V_a[c_1,\ldots,c_a]:=\sup\Bigl\{[\vecx_1,\ldots,\vecx_a]\col \vecx_1,\ldots,\vecx_a\in\R^a,\:
|\vecx_j|\leq1\,(\forall j),\,
\hspace{50pt}
\\\label{VMdef}
|c_1\vecx_1+\cdots+c_a\vecx_a|\leq1\Bigr\}.
\end{align}

\begin{lem}\label{BASICJBOUNDSlem}
For any $1\leq a\leq n$ and $c_1,\ldots,c_a>0$,
\begin{align}\label{BASICJBOUNDSlemres}
J_a^{(n)}[c_1,\ldots,c_a]\ll_a n^{a(a+3)/4} V_n^a\,\V_a[c_1,\ldots,c_a]^{n-a}.
\end{align}
On the other hand, for any fixed $c_1,\ldots,c_a\in\R_{>0}$ and $\V\in(0,\V_a[c_1,\ldots,c_a])$,
we have $\lim_{n\to\infty}\V^{-n}V_n^{-a}J_a^{(n)}[c_1,\ldots,c_a]=\infty$.
\end{lem}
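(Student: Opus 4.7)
The plan is to apply Lemma \ref{CHANGEVARlem} to the non-negative integrand in the definition \eqref{Jandef}; this integrand is $\OO(n)$-invariant since the defining conditions involve only Euclidean norms of the $\vecx_i$ and of $\sum c_i\vecx_i$, which are preserved under simultaneous rotation. Lemma \ref{CHANGEVARlem} then rewrites
\begin{align*}
J_a^{(n)}[c_1,\ldots,c_a]=\frac{\prod_{j=n-a+1}^n\omega_j}{\prod_{j=1}^a\omega_j}\int_{(\R^a)^a}I\Bigl(|\vecx_i|<1\,(\forall i),\,\Bigl|\sum_{i=1}^{a}c_i\vecx_i\Bigr|<1\Bigr)\,[\vecx_1,\ldots,\vecx_a]^{n-a}\,d\vecx_1\cdots d\vecx_a.
\end{align*}
Next I analyse the scaling of the prefactor. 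Using $\omega_j=2\pi^{j/2}/\Gamma(j/2)$ together with $V_n=\pi^{n/2}/\Gamma(n/2+1)$, an elementary manipulation gives
\begin{align*}
\frac{\prod_{j=n-a+1}^n\omega_j}{V_n^a}=2^a\pi^{a(1-a)/4}\prod_{k=0}^{a-1}\frac{\Gamma(n/2+1)}{\Gamma((n-k)/2)}.
\end{align*}
Stirling yields $\Gamma(n/2+1)/\Gamma((n-k)/2)\sim(n/2)^{1+k/2}$ for each fixed $k$, and summing the exponents gives $\sum_{k=0}^{a-1}(1+k/2)=a(a+3)/4$. The ratio $\prod_{j=n-a+1}^n\omega_j/(n^{a(a+3)/4}V_n^a)$ is a continuous positive function of $n$ on $[a,\infty)$ converging to a positive constant, so it is bounded above and below by positive $a$-dependent constants for all $n\geq a$.

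For the upper bound, on the support of the indicator $[\vecx_1,\ldots,\vecx_a]\leq \V_a[c_1,\ldots,c_a]$ directly from the definition \eqref{VMdef}, and the support is contained in the $a$-fold Cartesian power of the unit ball in $\R^a$, which has Lebesgue measure $V_a^a$. Inserting these bounds and the two-sided estimate for the prefactor yields the claimed bound $J_a^{(n)}[c_1,\ldots,c_a]\ll_a n^{a(a+3)/4}V_n^a\,\V_a[c_1,\ldots,c_a]^{n-a}$.

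For the lower bound, given $\V\in(0,\V_a[c_1,\ldots,c_a])$, the definition of supremum produces $(\vecx_1^*,\ldots,\vecx_a^*)\in(\R^a)^a$ with $|\vecx_i^*|\leq 1$, $|\sum c_i\vecx_i^*|\leq 1$, and $[\vecx_1^*,\ldots,\vecx_a^*]>\V$. Rescaling each $\vecx_i^*$ by a factor $(1-\eta)$ with $\eta>0$ sufficiently small makes all constraints strict while keeping the parallelotope volume above some $\V'\in(\V,\V_a]$. By continuity there is then an open neighborhood $U\subset(\R^a)^a$ of positive measure on which all three properties persist. Restricting the integral representation of $J_a^{(n)}$ to $U$ yields $J_a^{(n)}[c_1,\ldots,c_a]\gg n^{a(a+3)/4}V_n^a\,(\V')^{n-a}$, so that $\V^{-n}V_n^{-a}J_a^{(n)}[c_1,\ldots,c_a]\gg n^{a(a+3)/4}(\V'/\V)^n$, which tends to infinity since $\V'/\V>1$.

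The only mildly delicate point is the Stirling computation of the prefactor, where the exponent $a(a+3)/4$ must emerge exactly and the implied constant in $\ll_a$ must be controlled uniformly down to $n=a$; the remaining steps are routine applications of the definition of $\V_a$ together with standard compactness and continuity arguments.
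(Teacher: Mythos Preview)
Your proof is correct and follows essentially the same approach as the paper: apply Lemma \ref{CHANGEVARlem} to reduce to an integral over $(\R^a)^a$ weighted by $[\vecx_1,\ldots,\vecx_a]^{n-a}$, bound this weight pointwise by $\V_a[c_1,\ldots,c_a]^{n-a}$ (resp.\ from below on an open set by $(\V')^{n-a}$), and handle the prefactor $\prod_{j=n-a+1}^n\omega_j$ via Stirling. The paper estimates each factor $\omega_j\asymp_a n^{1+(n-j)/2}V_n$ separately rather than computing the full product at once, but this is only a cosmetic difference.
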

\begin{proof}
Let $B$ be the open unit ball in $\R^a$ centered at the origin.
Then Lemma \ref{CHANGEVARlem} gives
\begin{align}\notag
J_a^{(n)}[c_1,\ldots,c_a]
=\frac{\prod_{j=n-a+1}^n\omega_j}{\prod_{j=1}^a\omega_j}
\int_{B^a}I\biggl(\biggl|\sum_{i=1}^a c_i\vecx_i\biggr|<1\biggr)\,[\vecx_1,\ldots,\vecx_a]^{n-a}\,d\vecx_1\cdots d\vecx_a
\\\label{BASICJBOUNDSlempf1}
\leq\frac{\prod_{j=n-a+1}^n\omega_j}{\prod_{j=1}^a\omega_j} V_a^a\V_a[c_1,\ldots,c_a]^{n-a}.
\end{align}
Furthermore, by Stirling's formula,
\begin{align}\label{BASICJBOUNDSlempf2}
\omega_j=jV_j=\frac{2\pi^{j/2}}{\Gamma(j/2)}\asymp_a n^{1+(n-j)/2}V_n
\end{align}
for all $j\in\{n-a+1,n-a+2,\ldots,n\}$.
These two bounds imply \eqref{BASICJBOUNDSlemres}.

Next, let $c_1,\ldots,c_a\in\R_{>0}$ and $\V$ be given as in the statement of the lemma.
It is clear from \eqref{VMdef} that 
there exist non-empty open subsets $U_1,\ldots,U_a$ of $B$ such that
all $(\vecx_1,\ldots,\vecx_a)\in U_1\times\cdots\times U_a$ satisfy both
$|c_1\vecx_1+\cdots+c_a\vecx_a|<1$ and $[\vecx_1,\ldots,\vecx_a]>\V$.
Using the first equality in \eqref{BASICJBOUNDSlempf1}, it follows that
\begin{align*}
J_a^{(n)}[c_1,\ldots,c_a]\geq\frac{\prod_{j=n-a+1}^n\omega_j\prod_{j=1}^a\vol(U_j)}{\prod_{j=1}^a\omega_j}{\V}^{n-a}.
\end{align*}
Using this and \eqref{BASICJBOUNDSlempf2}, the second claim of the lemma follows.
\end{proof}

The next lemma gives a bound on the product $\frac{e_1}q\cdots\frac{e_m}q$ appearing in
\eqref{Zmoments}.
Recall that $e_i=(\ve_i,q)$, 
where $\ve_1,\ldots,\ve_m$ are the elementary divisors of the matrix $D$.

\begin{lem}\label{BASICVBOUNDlem2}
For any $D$ as in Lemma \ref{BASICVBOUNDlem},
\begin{align}\label{BASICVBOUNDlem2res}
\frac{e_1}q\cdots\frac{e_m}q\leq\prod_{j=1}^{r}\frac{g_j}q,\qquad
\text{with }\: g_j=\gcd\bigl(\{q\}\cup\{d_{i,\mu_j'}\col i\in A_j\}\bigr).
\end{align}
\end{lem}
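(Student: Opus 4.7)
My plan is to identify $\prod_i e_i$ with the $m$-th determinantal divisor $d_m$ of $D$, and then to bound $d_m$ from above by considering a specific family of $m\times m$ minors of $D$ whose structure is adapted to the partition $A_1,\ldots,A_r$.

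First I would observe that, because $D$ is $\langle k,q\rangle$-admissible, the columns indexed by $\nu_1,\ldots,\nu_m$ form $qI_m$, so $q\Z^m$ lies inside the column lattice of $D$. Hence every elementary divisor $\ve_i$ of $D$ divides $q$, which gives $e_i=(\ve_i,q)=\ve_i$ and
\begin{align*}
\prod_{i=1}^m e_i \;=\; \prod_{i=1}^m \ve_i \;=\; d_m,
\end{align*}
where $d_m$ denotes the gcd of all $m\times m$ minors of $D$. The problem thus reduces to showing $d_m\leq q^{m-r}\prod_j g_j$. To this end I would exhibit a family of minors as follows: for each subset $T\subseteq\{1,\ldots,r\}$ with $A_t\neq\emptyset$ for every $t\in T$, and each choice of representatives $i_t\in A_t$ ($t\in T$), take the minor of $D$ formed from the columns $\{\nu_i:i\notin\{i_t:t\in T\}\}\cup\{\mu_t':t\in T\}$. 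Since each $\nu$-column equals $q\vece_i$, a row/column rearrangement puts this minor in block lower triangular form with blocks $qI_{m-|T|}$ and $M=(d_{i_t,\mu_{t'}'})_{t,t'\in T}$. The defining property $i\in A_t\Rightarrow d_{i,\mu_{t'}'}=0$ for $t'<t$ then makes $M$ upper triangular in the natural ordering of $T$, so that the minor evaluates to $\pm q^{m-|T|}\prod_{t\in T}d_{i_t,\mu_t'}$.

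Next I would collect the resulting divisibilities. Since $d_m$ divides each such minor, taking gcd first over the choices of $i_t\in A_t$ turns each factor $d_{i_t,\mu_t'}$ into $\tilde g_t:=\gcd_{i\in A_t}d_{i,\mu_t'}$, and taking gcd over $T$ (including $T=\emptyset$, which contributes the minor $q^m$) yields $d_m\mid\gcd_{T}\,q^{m-|T|}\prod_{t\in T}\tilde g_t$. A prime-by-prime computation finishes the argument: for each prime $p$, writing $\gamma=v_p(q)$ and $\beta_t=v_p(\tilde g_t)$, the $T$ minimising the $p$-valuation is $\{t:\beta_t<\gamma,\,A_t\neq\emptyset\}$, and the resulting minimum equals $(m-r)\gamma+\sum_j v_p(g_j)$ with the convention $g_j=q$ when $A_j=\emptyset$. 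Hence the outer gcd equals $q^{m-r}\prod_j g_j$, which is the desired bound.

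The main obstacle will be recognising that the sharp bound requires this \emph{two}-tier gcd: a single subset $T$ or a single choice of representatives $(i_t)$ only produces $\prod_t \tilde g_t$, rather than $\prod_t \gcd(\tilde g_t,q)=\prod_t g_t$. One has to trade off $q$-factors (from the $\nu$-columns dropped when $T$ grows) against $\tilde g_t$-factors (from the $\mu$-columns included in $T$), and the prime-by-prime optimisation makes this trade-off precise.
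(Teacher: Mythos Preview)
Your argument is correct, but it takes a different route from the paper's. The paper does not work with determinantal divisors at all; instead it invokes Rogers' identity
\[
e_1\cdots e_m \;=\; N(D,q)\;:=\;\#\Bigl\{(x_1,\ldots,x_m)\in(\Z/q\Z)^m:\sum_{i=1}^m d_{ij}x_i\equiv0\ (\mathrm{mod}\ q)\ \forall j\Bigr\},
\]
and then bounds $N(D,q)$ by an iterative counting argument: for each $j$, given the coordinates $(x_i)_{i\in\tilde A_{j-1}}$, the single congruence attached to column $\mu_j'$ has at most $q^{a_j-1}g_j$ solutions $(x_i)_{i\in A_j}$. Multiplying over $j$ gives $N(D,q)\leq q^m\prod_j(g_j/q)$ directly, with no prime-by-prime optimisation needed.

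Your approach trades the black box $e_1\cdots e_m=N(D,q)$ for the observation $e_i=\ve_i$ (coming from $q\Z^m\subset D\Z^k$) together with an explicit family of $m\times m$ minors. This is more hands-on and makes visible exactly which columns of $D$ witness the bound, at the cost of the two-tier gcd and the $p$-adic optimisation over $T$. The paper's route is shorter and never has to name a single minor; yours is self-contained in that it avoids citing Rogers' Lemma~1. Either is perfectly adequate here.
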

(Note that if $A_j=\emptyset$ then $g_j=q$, giving a factor $1$ 
in the product in \eqref{BASICVBOUNDlem2res}.)
\begin{proof}
By \cite[Lemma 1]{rogers1},
\begin{align*}
e_1\cdots e_m=N(D,q):=
\#\biggl\{(x_1,\ldots,x_m)\in(\Z/q\Z)^m\col\sum_{i=1}^m d_{ij}x_i\equiv0\mod q\:\:(\forall j)\biggr\}.
\end{align*}
As a preliminary step, note that for any integers $c,d_1,\ldots,d_\ell$,
\begin{align}\label{BASICVBOUNDlem2pf1}
\#\biggl\{(x_1,\ldots,x_\ell)\in(\Z/q\Z)^\ell\col \sum_{j=1}^\ell d_jx_j\equiv c\mod q\biggr\}\leq 
q^{\ell-1}\gcd(q,d_1,\ldots,d_\ell).
\end{align}
Indeed, this is immediate when $q$ is a prime power, and the general case can be reduced to this case using
the Chinese Remainder Theorem.
We now set $\tA_0:=\emptyset$ and $\tA_j:=A_1\cup\cdots\cup A_j=\oA_1\cup\cdots\cup\oA_j$ for $j\geq1$.
For any $j\in\{1,\ldots,r\}$
and any given $(x_i)_{i\in \tA_{j-1}}$ in $(\Z/q\Z)^{\#\tA_{j-1}}$,
it follows from \eqref{BASICVBOUNDlem2pf1} that the number of
tuples $(x_i)_{i\in A_j}\in(\Z/q\Z)^{a_j}$ satisfying
$\sum_{i=1}^m d_{i,\mu_j'}x_i\equiv0\mod q$ is less than or equal to
$q^{a_j-1}g_j$. 
Using this fact for each $j=1,\ldots,r$, we obtain
\begin{align*}
N(D,q)\leq\prod_{j=1}^r\bigl(q^{a_j-1}g_j\bigr)
=q^m\prod_{j=1}^{r}\frac{g_j}q.
\end{align*}
This completes the proof of the lemma. 
\end{proof}

\subsection{Some basic properties of $\V_a[c_1,\ldots,c_a]$}

Recall that, for any integer $a\geq1$ and real numbers $c_1,\ldots,c_a>0$, 
\begin{align}\notag
\V_a[c_1,\ldots,c_a]:=\sup\Bigl\{[\vecx_1,\ldots,\vecx_a]\col \vecx_1,\ldots,\vecx_a\in\R^a,\:
|\vecx_j|\leq1\,(\forall j),\,
\hspace{50pt}
\\\label{Vadef}
|c_1\vecx_1+\cdots+c_a\vecx_a|\leq1\Bigr\},
\end{align}
where $[\vecx_1,\ldots,\vecx_a]$ denotes the volume of the parallelotope in $\R^a$ spanned by $\vecx_1,\ldots,\vecx_a$.
Note that $0<\V_a[c_1,\ldots,c_a]\leq1$, and $\V_a[c_1,\ldots,c_a]$ is invariant under any permutation of 
$c_1,\ldots,c_a$.

\begin{lem}\label{VMlem2}
$\V_a[c_1,\ldots,c_a]= c_1^{-1}\V_a[c_1^{-1},c_1^{-1}c_2,\ldots,c_1^{-1}c_a]$,
for any $c_1,\ldots,c_a>0$.
\end{lem}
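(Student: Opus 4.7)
The plan is to establish the identity by exhibiting an explicit linear bijection between the feasible sets for the two suprema and tracking how the parallelotope volume transforms.

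First I would introduce the substitution
\[
\vecy_1 := -\sum_{j=1}^{a} c_j\vecx_j,\qquad \vecy_j := \vecx_j\quad (j=2,\ldots,a),
\]
which is clearly an invertible linear map on $(\R^a)^a$, with inverse
$\vecx_1 = -c_1^{-1}(\vecy_1 + c_2\vecy_2 + \cdots + c_a\vecy_a)$, $\vecx_j = \vecy_j$ for $j\geq 2$. I would then check that it carries the feasible set for $\V_a[c_1,\ldots,c_a]$ (tuples satisfying $|\vecx_j|\leq 1$ for all $j$ and $|c_1\vecx_1+\cdots+c_a\vecx_a|\leq 1$) bijectively onto the feasible set for $\V_a[c_1^{-1},c_1^{-1}c_2,\ldots,c_1^{-1}c_a]$. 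Indeed, $|\vecy_1|\leq 1$ is exactly the condition $|\sum c_j\vecx_j|\leq 1$; the constraints $|\vecy_j|\leq 1$ for $j\geq 2$ are the same as $|\vecx_j|\leq 1$ for $j\geq 2$; and a direct computation gives
\[
\vecy_1 + \sum_{j=2}^a c_j\vecy_j = -c_1\vecx_1,
\]
so that $|c_1^{-1}\vecy_1 + \sum_{j\geq 2} c_1^{-1}c_j\vecy_j|\leq 1$ is equivalent to $|\vecx_1|\leq 1$.

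Next I would compute how the volume transforms. By column operations (adding $c_j$ times the $j$-th column to the first column for $j=2,\ldots,a$), the matrix with columns $\vecy_1,\vecy_2,\ldots,\vecy_a$ has the same determinant as the matrix with columns $-c_1\vecx_1,\vecx_2,\ldots,\vecx_a$. Hence
\[
[\vecy_1,\vecy_2,\ldots,\vecy_a] = c_1\,[\vecx_1,\vecx_2,\ldots,\vecx_a].
\]
Taking the supremum over the (bijectively identified) feasible sets on both sides then yields
$\V_a[c_1^{-1},c_1^{-1}c_2,\ldots,c_1^{-1}c_a] = c_1\V_a[c_1,\ldots,c_a]$, which is the desired identity.

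There is no real obstacle here: the entire argument is just a carefully chosen change of variables, and because we work with sets (not measures) we need not worry about whether the supremum is attained or about Jacobians. The only thing to get right is the explicit substitution, and its inverse, so that the two constraint systems match up correctly column by column.
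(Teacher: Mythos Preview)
Your proof is correct and follows essentially the same approach as the paper: both arguments use the linear change of variables that swaps $\vecx_1$ with (a sign-adjusted version of) $\vecd=\sum_j c_j\vecx_j$, check that this bijectively matches the two constraint sets, and note that $[\vecd,\vecx_2,\ldots,\vecx_a]=c_1[\vecx_1,\ldots,\vecx_a]$. The only cosmetic difference is that the paper uses $(\vecd,-\vecx_2,\ldots,-\vecx_a)$ whereas you use $(-\vecd,\vecx_2,\ldots,\vecx_a)$, which is immaterial since $[\cdot,\ldots,\cdot]$ is insensitive to sign.
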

\begin{proof}
Set $\vecd:=c_1\vecx_1+\ldots+c_a\vecx_a$ and note that
$\vecx_1=c_1^{-1}(\vecd-\sum_{j=2}^ac_j\vecx_j)$
and $[\vecx_1,\ldots,\vecx_a]=c_1^{-1}[\vecd,\vecx_2,\ldots,\vecx_a]$.
Hence the lemma follows by substituting $\vecx_1=\vecd^{(old)}$ and
$\vecx_j=-\vecx_j^{(old)}$ ($j\geq2$) in the definition of 
$\V_a[c_1^{-1},c_1^{-1}c_2,\ldots,c_1^{-1}c_a]$.
\end{proof}

\begin{lem}\label{VMlem1}
If $c_1^2+\cdots+c_a^2\leq1$, then $\V_a[c_1,\ldots,c_a]=1$.
Furthermore, we have $\V_a[c_1,\ldots,c_a]\leq c_\ell^{-1}$ for each $\ell\in\{1,\ldots,a\}$,
and if $c_\ell^2\geq1+\sum_{j\neq\ell}c_j^2$ then $\V_a[c_1,\ldots,c_a]=c_\ell^{-1}$.
\end{lem}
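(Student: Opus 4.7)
The three claims in the lemma are essentially direct consequences of the definition \eqref{Vadef}, and I would handle them one at a time using the same basic toolkit (explicit test configurations for lower bounds; a multilinearity trick for the upper bound).

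For the first claim, assume $c_1^2+\cdots+c_a^2\leq 1$. The trivial upper bound $\V_a[c_1,\ldots,c_a]\leq 1$ follows from Hadamard's inequality applied to the constraint $|\vecx_j|\leq 1$. For the matching lower bound, plug in the standard basis $\vecx_j=\vece_j$: the constraints $|\vecx_j|\leq 1$ are automatic, and $|c_1\vece_1+\cdots+c_a\vece_a|=(\sum c_j^2)^{1/2}\leq 1$ by hypothesis, while $[\vece_1,\ldots,\vece_a]=1$.

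For claim (ii), fix $\ell$ and set $\vecd:=\sum_{j=1}^a c_j\vecx_j$. By multilinearity of the determinant and the invariance under adding multiples of other columns,
\begin{equation*}
c_\ell[\vecx_1,\ldots,\vecx_a]=\bigl|\det(\vecx_1,\ldots,\vecx_{\ell-1},\vecd,\vecx_{\ell+1},\ldots,\vecx_a)\bigr|\leq |\vecd|\prod_{j\neq\ell}|\vecx_j|\leq 1,
\end{equation*}
where the first inequality is Hadamard's and the last step uses both constraints in \eqref{Vadef}. This yields $\V_a[c_1,\ldots,c_a]\leq c_\ell^{-1}$.

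For claim (iii), in view of (ii) it suffices to exhibit an admissible configuration $(\vecx_1,\ldots,\vecx_a)$ for which $[\vecx_1,\ldots,\vecx_a]=c_\ell^{-1}$. Assume without loss of generality $\ell=1$, and try
\begin{equation*}
\vecx_1=-c_1^{-1}\vece_1-\sum_{j=2}^a\tfrac{c_j}{c_1}\vece_j,\qquad \vecx_j=\vece_j\:\: (j\geq 2).
\end{equation*}
Then $c_1\vecx_1+\sum_{j\geq 2}c_j\vecx_j=-\vece_1$ has norm $1$, the vectors $\vecx_2,\ldots,\vecx_a$ are unit vectors, and
\begin{equation*}
|\vecx_1|^2=c_1^{-2}\Bigl(1+\sum_{j\geq 2}c_j^2\Bigr)\leq 1
\end{equation*}
precisely by the hypothesis $c_1^2\geq 1+\sum_{j\geq 2}c_j^2$. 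Finally, expanding along the first coordinate, $[\vecx_1,\ldots,\vecx_a]=c_1^{-1}$, which completes the proof.

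There is no serious obstacle here; the only mild subtlety is organizing the multilinearity argument for (ii) and guessing the extremal configuration for (iii), both of which are natural once one notes Lemma \ref{VMlem2} (the rescaling identity), which essentially says that $\V_a$ is governed by the constraint that becomes ``tight'' first.
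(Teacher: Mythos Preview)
Your proof is correct. The paper's proof of the first claim is identical to yours (take an orthonormal basis), but for claims (ii) and (iii) the paper argues differently: it invokes the rescaling identity of Lemma \ref{VMlem2} together with the trivial bound $\V_a\leq1$. Concretely, by Lemma \ref{VMlem2} and permutation invariance,
\[
\V_a[c_1,\ldots,c_a]=c_\ell^{-1}\,\V_a\bigl[c_\ell^{-1},(c_\ell^{-1}c_j)_{j\neq\ell}\bigr],
\]
which immediately gives (ii) since the right factor is $\leq1$, and gives (iii) since the hypothesis $c_\ell^2\geq1+\sum_{j\neq\ell}c_j^2$ is exactly the condition $(c_\ell^{-1})^2+\sum_{j\neq\ell}(c_\ell^{-1}c_j)^2\leq1$ needed to apply the first claim to the rescaled arguments.

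Your route is more hands-on: you get (ii) by a direct Hadamard/column-operation argument and (iii) by exhibiting an explicit extremizer. This is entirely self-contained and avoids appealing to Lemma \ref{VMlem2}; on the other hand, the paper's approach makes transparent the symmetry that your closing remark alludes to, namely that the roles of $\vecd$ and $c_\ell\vecx_\ell$ can be swapped, so that the two regimes (sum of squares small vs.\ one $c_\ell$ dominant) are dual to each other. If you unwind the substitution in the proof of Lemma \ref{VMlem2}, the orthonormal extremizer for the rescaled problem becomes exactly your explicit configuration for (iii), so the two arguments are in fact equivalent under that change of variables.
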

\begin{proof}
The first statement is clear by taking $\vecx_1,\ldots,\vecx_a$ to be an ON-basis in the definition of
$\V_a[c_1,\ldots,c_a]$.
The remaining statements follow from the first statement of the lemma,
combined with the general bound $\V_a[c_1,\ldots,c_a]\leq1$,
Lemma \ref{VMlem2}, and the invariance of $\V_a[c_1,\ldots,c_a]$ under permutations of $c_1,\ldots,c_a$.
\end{proof}

\begin{remark}\label{CASEa1}
For $a=1$ we have $\V_1[c]=\min(1,c^{-1})$.
This is clear directly from the definition, or from Lemma \ref{VMlem1}.
\end{remark}

\begin{lem}\label{VaCONTLEM}
For any $c_1,\ldots,c_a>0$ and $c_1',\ldots,c_a'>0$,
\begin{align*}
\V_a[c_1',\ldots,c_a']\geq\Bigl(1+\sum_{j=1}^a|c_j-c_j'|\Bigr)^{-a}\V_a[c_1,\ldots,c_a].
\end{align*}
In particular $\V_a$ is a continuous function on $(\R_{>0})^a$.
\end{lem}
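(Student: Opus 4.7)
The plan is straightforward: exhibit a valid feasible tuple for $\V_a[c_1',\ldots,c_a']$ by scaling down a near-optimal tuple for $\V_a[c_1,\ldots,c_a]$, chosen so that the linear constraint is absorbed by the scaling factor.

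First, let me fix $\delta>0$ small and pick $\vecx_1,\ldots,\vecx_a\in\R^a$ with $|\vecx_j|\leq 1$ for each $j$ and $|c_1\vecx_1+\cdots+c_a\vecx_a|\leq 1$ such that $[\vecx_1,\ldots,\vecx_a]\geq \V_a[c_1,\ldots,c_a]-\delta$. Set $\lambda:=(1+\sum_{j=1}^a|c_j-c_j'|)^{-1}\in(0,1]$ and $\vecy_j:=\lambda\vecx_j$. Then clearly $|\vecy_j|\leq\lambda\leq 1$, and by the triangle inequality
\begin{align*}
\Bigl|\sum_{j=1}^a c_j'\vecy_j\Bigr|
=\lambda\Bigl|\sum_{j=1}^a c_j\vecx_j+\sum_{j=1}^a(c_j'-c_j)\vecx_j\Bigr|
\leq\lambda\Bigl(1+\sum_{j=1}^a|c_j-c_j'|\Bigr)=1,
\end{align*}
so $(\vecy_1,\ldots,\vecy_a)$ is admissible in the definition of $\V_a[c_1',\ldots,c_a']$.

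Since $[\vecy_1,\ldots,\vecy_a]=\lambda^a[\vecx_1,\ldots,\vecx_a]$, this gives $\V_a[c_1',\ldots,c_a']\geq\lambda^a(\V_a[c_1,\ldots,c_a]-\delta)$. Letting $\delta\to 0$ yields the stated inequality.

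For continuity, the inequality and its counterpart with the roles of $(c_j)$ and $(c_j')$ interchanged together imply that $\V_a[c_1',\ldots,c_a']/\V_a[c_1,\ldots,c_a]\to 1$ as $(c_1',\ldots,c_a')\to(c_1,\ldots,c_a)$; since $\V_a[c_1,\ldots,c_a]$ is positive and finite, continuity on $(\R_{>0})^a$ follows. There is no real obstacle here; the only thing to watch is to verify that $\lambda\leq 1$ (so that the bound $|\vecy_j|\leq 1$ is preserved), which is immediate from $\sum_j|c_j-c_j'|\geq 0$.
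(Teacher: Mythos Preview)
Your proof is correct and follows essentially the same argument as the paper: scale a (near-)optimal tuple for $\V_a[c_1,\ldots,c_a]$ by the factor $\lambda=(1+\sum_j|c_j-c_j'|)^{-1}$ and check via the triangle inequality that the scaled tuple is admissible for $\V_a[c_1',\ldots,c_a']$. The only cosmetic difference is that the paper directly uses vectors achieving the supremum, whereas you work with a near-optimizer and let $\delta\to0$; your version is slightly more careful but otherwise identical.
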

\begin{proof}
Set $\delta=(1+\sum_{j=1}^a|c_j-c_j'|)^{-1}\leq1$.
Let $\vecx_1,\ldots,\vecx_a$ be vectors which achieve the supremum in \eqref{Vadef}.
Then
\begin{align*}
|c_1'\vecx_1+\cdots+c_a'\vecx_a|\leq|c_1\vecx_1+\cdots+c_a\vecx_a|+\sum_{j=1}^a|c_j-c_j'|\leq\delta^{-1}.
\end{align*}
Hence the vectors $\delta\vecx_1,\ldots,\delta\vecx_a$ are admissible in the supremum defining $\V_a[c_1',\ldots,c_a']$,
so that $\V_a[c_1',\ldots,c_a']\geq[\delta\vecx_1,\ldots,\delta\vecx_a]=\delta^a\V_a[c_1,\ldots,c_a]$.
\end{proof}

The following technical lemma gives the key input both to 
a monotonicity property of $\V_a$ which we will need (Lemma \ref{VaMONOTONICITYlem}),
and to the explicit determination of $\V_a[c_1,\ldots,c_a]$ in the case $c_1=\cdots=c_a$
(Lemma \ref{WVlem}).
\begin{lem}\label{texistsLEM}
Assume $c_1,\ldots,c_a>0$, $c_1^2+\cdots+c_a^2>1$
and $c_j^2<1+\sum_{\ell\neq j}c_\ell^2$ for each $j$.
Let $\vecx_1,\ldots,\vecx_a$ be vectors 
which achieve the supremum in \eqref{Vadef}.
Let $\vecd:=c_1\vecx_1+\ldots+c_a\vecx_a$, and for each $j\in\{1,\ldots,a\}$,
let $\delta_j$ be the length of the orthogonal projection of $\vecd$ onto the subspace 
$U_j=\Span\{\vecx_\ell\col \ell\in\{1,\ldots,a\}\setminus\{j\}\}$.
Then, for each $j\in\{1,\ldots,a\}$,
\\[3pt]
(i) there is $\ve>0$ such that $c_j'\in(c_j-\ve,c_j)\Rightarrow\V_a[c_1,\ldots,c_j',\ldots,c_a]>\V_a[c_1,\ldots,c_a]$;
\\[3pt]
(ii) $\delta_j^2+c_j^2>1$, and the number
$\delta_j^2-(\delta_j-\delta_j^3)(\delta_j^2+c_j^2-1)^{-1/2}$ 
is independent of $j$.
\end{lem}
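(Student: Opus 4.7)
The argument proceeds by a Lagrange multiplier analysis of the variational problem in \eqref{Vadef}. I first verify that at any maximizer $(\vecx_1,\ldots,\vecx_a)$ we have $|\vecx_j|=1$ for every $j$ and $|\vecd|=1$: the equality $|\vecd|=1$ follows because the unconstrained maximum of $[\vecx_1,\ldots,\vecx_a]$ over $\{|\vecx_j|\le1\}_{j=1}^a$ is attained only at orthonormal configurations, which give $|\vecd|^2=\sum c_j^2>1$ by hypothesis and so violate $|\vecd|\le1$; the equalities $|\vecx_j|=1$ require in addition the hypothesis $c_\ell^2<1+\sum_{j\neq\ell}c_j^2$ to exclude the remaining degenerate configurations. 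KKT then produces non-negative multipliers $\lambda_j,\mu$ such that $\vecy_j=2\lambda_j\vecx_j+2\mu c_j\vecd$, where $\vecy_1,\ldots,\vecy_a$ denotes the basis of $\R^a$ dual to $\vecx_1,\ldots,\vecx_a$; the possibility $\mu=0$ is excluded since it would force orthonormality of the $\vecx_j$'s.

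To prove (ii) I decompose this KKT equation into its components parallel and perpendicular to $U_j$. Because $\vecx_\ell\in U_j$ for $\ell\neq j$, the perpendicular part of $\vecd$ equals $c_j\vecx_j^\perp$; writing $h_j=|\vecx_j^\perp|$, the condition $|\vecd|=1$ becomes $\delta_j^2+c_j^2 h_j^2=1$, so that $h_j^2=(1-\delta_j^2)/c_j^2$. The assertion $h_j<1$ (equivalently $\delta_j^2+c_j^2>1$) is deduced from the hypotheses, since $h_j=1$ would force $\vecx_j\perp U_j$ and then, via $|\vecd|=1$, also $\sum_{\ell\neq j}c_\ell\vecx_\ell=\vec0$, collapsing the parallelotope. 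The perpendicular component of the KKT equation gives $2\lambda_j+2\mu c_j^2=1/h_j^2$, while the parallel component (in which $\vecx_j^\parallel$ and $\vecd^\parallel$ are antiparallel) gives $\lambda_j\sqrt{c_j^2+\delta_j^2-1}=\mu c_j^2\delta_j$. Eliminating $\lambda_j$ and simplifying yields
\begin{align*}
\frac{1}{2\mu}\;=\;(1-\delta_j^2)\biggl(1+\frac{\delta_j}{\sqrt{c_j^2+\delta_j^2-1}}\biggr),
\end{align*}
so $1-1/(2\mu)$ equals precisely the expression in (ii); since its left-hand side is independent of $j$, so is the right-hand side, which proves (ii).

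For (i), I dot the KKT equation with $\vecd$ and use $\vecy_j\cdot\vecd=c_j$ to obtain $c_j(1-2\mu)=2\lambda_j\alpha_j$ with $\alpha_j=\vecd\cdot\vecx_j$; hence each $\alpha_j$ with $\lambda_j>0$ shares the sign of $1-2\mu$. Summing $\sum_j c_j\alpha_j=|\vecd|^2=1>0$ forces $\alpha_j>0$ for all $j$ and $\mu<\tfrac12$. A standard envelope theorem computation now yields $\partial\V_a/\partial c_j=-2\mu\alpha_j<0$, from which (i) follows. The main obstacle I expect is the step showing $|\vecx_j|=1$ at the maximizer: one must rule out configurations with $|\vecx_j|<1$, for which the KKT equation forces $\lambda_j=0$ and hence $\vecd\perp U_j$, via a careful use of the hypothesis $c_j^2<1+\sum_{\ell\neq j}c_\ell^2$; the related possibility of a degenerate Lagrange multiplier $\lambda_j=0$ for some $j$ in the otherwise non-degenerate case must also be accommodated (and in fact ruled out) by the same means.
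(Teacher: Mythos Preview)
Your approach via KKT conditions and the envelope theorem is genuinely different from the paper's, and the core computations are correct. The paper instead fixes $j$ and explicitly solves the one-variable problem of maximizing the distance from $\vecx_j$ to $U_j$ subject to $|\vecx_j|\le1$ and $|c_j\vecx_j+\vecy|\le1$ with $\vecy=\vecd-c_j\vecx_j$ held fixed; this yields closed formulas for $\vecx_j$ (cf.\ \eqref{texistsLEMpfnew7}), from which the relation in (ii) is obtained by computing $\vecx_i\cdot\vecx_j$ in two ways, and (i) by an explicit construction of a better $\vecx_j'$ for $c_j'$ slightly below $c_j$. Your route is more conceptual: the expression in (ii) is exactly $1-\tfrac1{2\mu}$, so its $j$-independence is immediate once the multiplier calculus is in place, and (i) follows from the sign of the envelope derivative. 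The price is the regularity bookkeeping (LICQ, strict complementarity, differentiability of the value function) needed to justify KKT and the envelope theorem, which you correctly flag as work to be done.

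One concrete error to fix: your argument that $h_j<1$ (equivalently $\delta_j^2+c_j^2>1$) is wrong as stated. From $h_j=1$ and $|\vecd|=1$ alone you only get $\bigl|\sum_{\ell\neq j}c_\ell\vecx_\ell\bigr|^2=1-c_j^2$, which does \emph{not} force this sum to vanish when $c_j<1$. What does work, within your own framework, is the parallel component of the KKT equation: if $h_j=1$ then $\vecx_j^\parallel=0$, so the parallel part reads $0=2\mu c_j\vecd^\parallel$, and since $\mu>0$ this gives $\vecd^\parallel=\sum_{\ell\neq j}c_\ell\vecx_\ell=\vec0$, collapsing the parallelotope as you intended. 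The paper handles this point quite differently: it shows that $\tau_i=0$ for one $i$ propagates to $\tau_j=0$ for all $j$, forcing the $\vecx_j$ to be orthonormal and hence $\sum c_j^2=1$. Similarly, for the obstacles you flag at the end ($|\vecx_j|<1$ or $\lambda_j=0$, both of which lead via KKT to $\delta_j=0$), the paper passes to the dual expression $[\vecd,\vecx_1,\ldots,\widehat\vecx_j,\ldots,\vecx_a]=c_j[\vecx_1,\ldots,\vecx_a]$ and uses maximality of that configuration to force $\{\vecx_\ell:\ell\neq j\}$ orthonormal, whence $c_j^2\ge1+\sum_{\ell\neq j}c_\ell^2$, contradicting the hypothesis; the same rewriting should let you close your remaining gaps.
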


\begin{proof}
For each $j$, 
$\vecx_j\notin U_j$ since $\V_a[c_1,\ldots,c_a]>0$;
we let $\vece_j$ be the unique unit vector in $\R^a$ which is orthogonal to $U_j$
and satisfies $\vecx_j\cdot\vece_j>0$.
Let $\vecd_j$ 
be the orthogonal projection of $\vecd$ 
onto $U_j$; thus $\delta_j=|\vecd_j|$.

Let us fix $j$ temporarily, and set $\vecy=\vecd-c_j\vecx_j\in U_j$ and $y=|\vecy|$.
The optimality property of $\vecx_1,\ldots,\vecx_a$ 
implies in particular that among all $\vecx_j'\in\R^a$ satisfying $|\vecx_j'|\leq1$ and
$|c_j\vecx_j'+\vecy|\leq1$,
the vector $\vecx_j'=\vecx_j$ has maximal distance from $U_j$.
By a straightforward analysis one deduces from this fact (and $\vecx_j\cdot\vece_j>0$)
that
\begin{align}\label{texistsLEMpfnew7add}
\vecx_j=-\alpha\vecy+\beta\vece_j,
\end{align}
with
\begin{align}\label{texistsLEMpfnew7}
\begin{cases}
\alpha=0\:\text{ and }\:\beta=1&\text{if }\: y^2\leq 1-c_j^2,
\\
\alpha=\beta=c_j^{-1} 
&\text{if }\: y^2\leq c_j^2-1,
\\[5pt]
\alpha=(2c_{j}y^2)^{-1}(y^2+c_{j}^2-1)
\:\text{ and }\:\beta=\sqrt{1-(\alpha y)^2}
\hspace{10pt}
&\text{if }\: y^2>|c_j^2-1|.
\end{cases}
\end{align}

Let us first assume that $y^2<1-c_j^2$.
Then $\vecx_j=\vece_j$ by \eqref{texistsLEMpfnew7}
and $|\vecd|=|\vecy+c_j\vecx_j|=(y^2+c_j^2)^{1/2}<1$,
and so the optimality property of $\vecx_1,\ldots,\vecx_a$ 
forces $\{\vecx_\ell\col\ell\neq j\}$ to be an orthonormal basis of $U_j$.
Hence $\sum_{\ell\neq j}c_\ell^2=y^2<1-c_j^2$, which contradicts our assumption that $c_1^2+\cdots+c_a^2>1$.
This shows that $y^2<1-c_j^2$ cannot hold.

Similarly, $y^2<c_j^2-1$ is impossible.
Indeed, $[\vecx_1,\ldots,\vecx_a]=c_j^{-1}[\vecd,\vecx_1,\ldots,\widehat\vecx_j,\ldots,\vecx_a]$
(where $\widehat\vecx_j$ denotes omission of $\vecx_j$ in the list),
and hence
the optimality property of $\vecx_1,\ldots,\vecx_a$ can be rephrased as saying that
the $a$ vectors 
$\vecd,\vecx_1,\ldots,\widehat\vecx_j,\ldots,\vecx_a$
maximize $[\vecd,\vecx_1,\ldots,\widehat\vecx_j,\ldots,\vecx_a]$ subject to
$|\vecd|\leq1$, $|\vecx_\ell|\leq1$ (all $\ell\neq j$) and $|\vecd-\sum_{\ell\neq j}c_\ell\vecx_\ell|\leq c_j$.
Assume now $y^2<c_j^2-1$.
Then \eqref{texistsLEMpfnew7} gives 
$\vecd=\vecy+c_j\vecx_j=\vece_j$ and 
$|\vecx_j|^2=(y/c_j)^2+(1/c_j)^2<1$,
and so the optimality property just noted forces $\{\vecx_\ell\col\ell\neq j\}$
to again be an orthonormal basis of $U_j$.
Therefore $\sum_{\ell\neq j} c_\ell^2=y^2<c_j^2-1$,
contradicting our assumption that $c_j^2<1+\sum_{\ell\neq j} c_\ell^2$.

In conclusion, $y^2\geq|c_j^2-1|$ must hold.
Let us also assume $y>0$.
Then one verifies that the formulas for $\alpha$ and $\beta$ in the third line of \eqref{texistsLEMpfnew7} 
hold true (viz.,  they remain valid even when $y^2=|c_j^2-1|$).
These formulas imply $|\vecx_j|=|\vecd|=1$.
Using $\vecd_j=(1-c_j\alpha)\vecy$ and the formula for $\alpha$, we obtain
$\delta_j=(y^2+1-c_j^2)/(2y)$ and $0\leq\delta_j\leq y$.
Solving for $y$ gives $\delta_j^2+c_j^2\geq1$
and
\begin{align}\label{texistsLEMpfnew9}
y=\delta_{j}+\tau_j,\quad\text{with }\:
\tau_j:=(\delta_{j}^2+c_{j}^2-1)^{1/2}.
\end{align}
Eliminating $\vecy$ from $\vecx_j=-\alpha\vecy+\beta\vece_j$ and
$\vecd=\vecy+c_j\vecx_j$ gives $(1-c_j\alpha)\vecx_j=-\alpha\vecd+\beta\vece_j$,
and here $1-c_j\alpha=\delta_j/y$.
Hence $c_j\delta_j\vecx_j=c_jy(\beta\vece_j-\alpha\vecd)$.
Using \eqref{texistsLEMpfnew9}, we obtain $c_j\alpha y=\tau_j$
and $c_j\beta=(1-\delta_j^2)^{1/2}$.
Therefore
\begin{align}\label{texistsLEMpfnew20}
c_j\delta_j\vecx_j=(\delta_j+\tau_j)(1-\delta_j^2)^{1/2}\vece_j-\tau_j\vecd.
\end{align}
We take note of two more facts. First:
\begin{align}\label{texistsLEMpfnew30}
\vecd\cdot\vece_j=(c_j\vecx_j+\vecy)\cdot\vece_j=c_j\beta=(1-\delta_j^2)^{1/2}>0.
\end{align}
Second:
\begin{align}\label{texistsLEMpfnew10}
\tau_j=0\Rightarrow\vecx_j=\vece_j.
\end{align}
Indeed, $\tau_j=0$ implies $y=\delta_j=(y^2+1-c_j^2)/(2y)$ by \eqref{texistsLEMpfnew9}; thus
$y^2=1-c_j^2$, giving $\vecx_j=\vece_j$. 

In the remaining case $y=0$,
we have $c_j=1$ (since $y^2\geq|c_j^2-1|$)
and $\vecx_j=\vece_j$ (by \eqref{texistsLEMpfnew7}, \eqref{texistsLEMpfnew7add});
thus also $\vecd=
\vece_j$,
$\delta_j=\tau_j=0$, and all of \eqref{texistsLEMpfnew9}--\eqref{texistsLEMpfnew10} are still valid.

We now prove the first half of (ii), which asserts that in fact $\tau_j>0$ must hold for all $j$.
Assume $\tau_i=0$ for some $i$.
Then $\vecx_i=\vece_i$ by \eqref{texistsLEMpfnew10},
and now for every $j\neq i$ we have $\vece_j\cdot\vece_i=\vece_j\cdot\vecx_i=0$, since $\vecx_i\in U_j$.
Similarly $\vecx_j\cdot\vece_i=0$. 
Therefore $\tau_j\vecd\cdot\vece_i=0$, by \eqref{texistsLEMpfnew20};
but $\vecd\cdot\vece_i>0$ (cf.\ \eqref{texistsLEMpfnew30}); hence $\tau_j=0$.
It follows that $\tau_j=0$ and $\vecx_j=\vece_j$ for \textit{all} $j$;
hence $\vecx_1,\ldots,\vecx_a$ is an orthonormal basis of $\R^a$.
Then $1=|\vecd|^2=c_1^2+\cdots+c_a^2$, which contradicts one of our assumptions.
Hence indeed $\tau_j>0$ 
for all $j$.

Next, for any $i\neq j$ in $\{1,\ldots,a\}$, we compute
$c_ic_j\delta_i\delta_j\vecx_i\cdot\vecx_j$ in two different ways.
On the one hand, using \eqref{texistsLEMpfnew20} and $\vecx_i\cdot\vece_j=0$,  
we have
\begin{align}\notag
c_ic_j\delta_i\delta_j\vecx_i\cdot\vecx_j=c_i\delta_i\vecx_i\cdot(-\tau_j\vecd)
=\bigl((\delta_i+\tau_i)(1-\delta_i^2)^{1/2}\vece_i-\tau_i\vecd\bigr)\cdot(-\tau_j\vecd)
\\\label{texistsLEMpfnew31}
=\tau_j(\tau_i\delta_i^2+\delta_i^3-\delta_i),
\end{align}
where in the last equality we used $|\vecd|=1$ and $\vece_i\cdot\vecd=(1-\delta_i^2)^{1/2}$.
On the other hand, by symmetry, the same formula holds with $i$ and $j$ interchanged. Thus
\begin{align}\label{texistsLEMpfnew21}
\tau_j(\tau_i\delta_i^2+\delta_i^3-\delta_i)
=\tau_i(\tau_j\delta_j^2+\delta_j^3-\delta_j).
\end{align}
This holds for all $i\neq j$,
and dividing through with $\tau_i\tau_j$, 
we have proved (ii).

Let $t$ be the number $\delta_j^2-\tau_j^{-1}(\delta_j-\delta_j^3)$, which is independent of $j$.
Let us first assume that $\delta_\ell=0$ for some $\ell$.
Then $t=0$, and also $\vecd\cdot\vece_\ell=1$ by \eqref{texistsLEMpfnew30},
and since $|\vecd|=1$ this forces $\vecd=\vece_\ell$.
For each $j\neq\ell$, we have $U_j\neq U_\ell$ and thus $\delta_j>0$. 
For any $i\neq j$, the right-hand side of \eqref{texistsLEMpfnew31} vanishes, since $t=0$,
and if further $i,j\neq\ell$ then we may divide through with $\delta_i\delta_j$ to conclude that
$\vecx_i\cdot\vecx_j=0$.
Hence $\{\vece_\ell\}\cup\{\vecx_j\col j\neq\ell\}$ is an orthonormal basis of $\R^a$.
Now, from $c_\ell\vecx_\ell=\vecd-\sum_{j\neq\ell}c_j\vecx_j=\vece_\ell-\sum_{j\neq\ell}c_j\vecx_j$
it follows that $c_\ell^2=1+\sum_{j\neq\ell}c_j^2$,
which contradicts our assumption that $c_\ell^2<1+\sum_{j\neq\ell}c_j^2$.
Hence we conclude that $\delta_j>0$ must hold for all $j$.
Expanding $1=|\vecd|^2=|\sum_j c_j\vecx_j|^2$ using \eqref{texistsLEMpfnew31}, we now obtain
\begin{align*}
1=\sum_{j=1}^a c_j^2+2\sum_{i<j}\frac{\tau_i\tau_j}{\delta_i\delta_j}t.
\end{align*}
In view of our assumption $\sum c_j^2>1$, this forces $t<0$.
Hence $\tau_j\delta_j<1-\delta_j^2$,
or equivalently $c_j^2>(\delta_j+\tau_j)^2-1$, for all $j$.

Now fix $j$ again, and write $\vecy=\vecd-c_j\vecx_j$ and $y=|\vecy|$ as before;
note that $y>0$ since $\tau_j>0$.
By \eqref{texistsLEMpfnew9},
$c_j^2>(\delta_j+\tau_j)^2-1$ means that $c_j^2>y^2-1$,
and this is easily seen to imply that there is some $\ve>0$ such that the function
$c\mapsto(y^2+c^2-1)/(2yc)$ is strictly increasing in the interval $c\in[c_j-\ve,c_j]$.
We have $y^2>1-c_j^2$ since $\tau_j>0$;
hence, by shrinking $\ve$ if necessary, we may also assume that 
$(y^2+c^2-1)/(2yc)>0$ for all $c\in[c_j-\ve,c_j]$.
In particular, taking $\alpha,\beta$ as in 
\eqref{texistsLEMpfnew7},
and setting, for any given $c_j'\in(c_j-\ve,c_j)$,
\begin{align*}
\alpha'=(2c_j'y^2)^{-1}(y^2+{c_j'}^2-1)\quad\text{and}\quad
\beta'=\sqrt{1-(\alpha'y)^2},
\end{align*}
we have $0<y\alpha'<y\alpha<1$, and hence $\beta'>\beta>0$.
Now set $\vecx'_j=-\alpha'\vecy+\beta'\vece_j$.
Then $|\vecx_j'|=1$ since $(\alpha'y)^2+{\beta'}^2=1$,
and $|\sum_{i\neq j}c_i\vecx_i+c_j'\vecx_j'|=|\vecy+c_j'\vecx_j'|=1$
since $(1-c_j'\alpha')^2y^2+{c_j'}^2{\beta'}^2=1$.
Hence
\begin{align*}
\V_a[c_1,\ldots,c_j',\ldots,c_a]\geq
[\vecx_1,\ldots,\vecx_j',\ldots,\vecx_a]=\frac{\beta'}{\beta}[\vecx_1,\ldots,\vecx_a]
&>[\vecx_1,\ldots,\vecx_a]
\\
&=\V_a[c_1,\ldots,c_a],
\end{align*}
which concludes the proof of (i).
\end{proof}

We next establish a monotonicity property of the function 
$\V_a[c_1,\ldots,c_a]$.
\begin{lem}\label{VaMONOTONICITYlem}
If $c_j\geq c_j'>0$ for $j=1,\ldots,a$, then $\V_a[c_1,\ldots,c_a]\leq\V_a[c_1',\ldots,c_a']$.
\end{lem}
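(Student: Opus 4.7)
The plan is to reduce the statement to one-variable monotonicity: it suffices to show that for each $j$ and with the other $c_\ell$ held fixed, the function $\psi(t):=\V_a[c_1,\ldots,c_{j-1},t,c_{j+1},\ldots,c_a]$ is non-increasing in $t\in(0,\infty)$. Granting this, the general claim follows by lowering the coordinates $c_j$ to $c_j'$ one at a time, each step weakly increasing $\V_a$.

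To prove the one-variable monotonicity, set $\sigma:=\sum_{\ell\neq j}c_\ell^2$ and partition $(0,\infty)$ according to which hypothesis governs $\psi(t)$. Lemma \ref{VMlem1} yields three ``explicit'' regimes: $\psi(t)=1$ on $\{t^2+\sigma\leq 1\}$; $\psi(t)=1/t$ on $\{t^2\geq 1+\sigma\}$; and $\psi(t)=1/c_\ell$ whenever some $\ell\neq j$ satisfies $c_\ell^2\geq 1+t^2+\sum_{\ell'\neq j,\ell}c_{\ell'}^2$. A short check (using that at most one $c_\ell$ can dominate $1+\sum_{\ell'\neq\ell}c_{\ell'}^2$, and that the first and third of these regimes are incompatible for $t>0$) shows that the union of these three regions has the form $(0,\alpha]\cup[\beta,\infty)$ with $\beta=\sqrt{1+\sigma}$, and that $\psi$ is constant on $(0,\alpha]$ and equals $1/t$ on $[\beta,\infty)$. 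The complementary open interval $D=(\alpha,\beta)$ is precisely where both hypotheses of Lemma \ref{texistsLEM} hold.

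Monotonicity of $\psi$ on $(0,\alpha]$ and on $[\beta,\infty)$ is then immediate. For the remaining interval $D$, the key input is Lemma \ref{texistsLEM}(i) applied in the $j$-th coordinate: at every $t^*\in D$ there is $\ve>0$ with $\psi(t')>\psi(t^*)$ for $t'\in(t^*-\ve,t^*)$. Combined with continuity of $\psi$ (Lemma \ref{VaCONTLEM}), this rules out any local maximum of $\psi$ inside $D$. Indeed, if one had $\psi(t_1)<\psi(t_2)$ for some $t_1<t_2$ in $D$, then $\psi$ would attain its maximum over $[t_1,t_2]$ at some $t^*\in(t_1,t_2]$, and Lemma \ref{texistsLEM}(i) would immediately produce points to the left of $t^*$ at which $\psi$ is strictly larger, a contradiction. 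Hence $\psi$ is non-increasing on $D$, and continuity glues the three pieces into a single non-increasing function on $(0,\infty)$; a quick check at the junction confirms the required inequalities $\psi(\alpha)\geq 1/\beta$ (using $c_{\ell^*}^2\leq\sigma\leq 1+\sigma$ or $\beta\geq 1$).

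The substantive content is already packaged in Lemma \ref{texistsLEM}(i); what remains is routine bookkeeping of the regions plus the maximum-plus-continuity argument. The only point requiring modest care is verifying the disjointness and interval structure of the three ``explicit'' regions, which reduces to elementary inequalities.
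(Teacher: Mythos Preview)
Your proof is correct and follows essentially the same route as the paper's: reduce to one-variable monotonicity in a single coordinate, use Lemma~\ref{VMlem1} to handle the two outer regimes where $\psi$ is explicitly $1$, $1/c_\ell$, or $1/t$, and invoke Lemma~\ref{texistsLEM}(i) together with continuity (Lemma~\ref{VaCONTLEM}) on the remaining open interval. Your write-up is in fact slightly more explicit than the paper's---you spell out the maximum-on-$[t_1,t_2]$ argument that converts the local one-sided strict increase from Lemma~\ref{texistsLEM}(i) into monotonicity on $D$, and you verify that the first and third explicit regimes cannot coexist---whereas the paper simply asserts ``part (i) of that lemma, together with the continuity of $\V_a$, implies that $c_1\mapsto\V_a[c_1,c_2,\ldots,c_a]$ is strictly decreasing for $\alpha<c_1<\beta$'' and normalizes so that $c_2$ is the largest of the fixed coordinates.
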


\begin{proof}
It suffices to prove that for any fixed $c_2,\ldots,c_a>0$,
$\V_a[c_1,c_2,\ldots,c_a]$ is a decreasing function of $c_1>0$.
Without loss of generality, we assume that $c_2\geq c_j$ for $j\geq3$.
Set 
\begin{align*}
\alpha=\max\Bigl(0,1-\sum_{j\geq2}c_j^2,\: c_2^2-1-\sum_{j\geq3}c_j^2\Bigr)^{1/2}
\quad\text{and}\quad
\beta=\Bigl(1+\sum_{j\geq2}c_j^2\Bigr)^{1/2}.
\end{align*}
Then for $\alpha<c_1<\beta$, Lemma \ref{texistsLEM} applies,
and part (i) of that lemma, together with the continuity of $\V_a$
(cf.\ Lemma \ref{VaCONTLEM}), implies that $c_1\mapsto\V_a[c_1,c_2,\ldots,c_a]$ is strictly decreasing for
$\alpha<c_1<\beta$.
In fact this is valid for $\alpha\leq c_1\leq\beta$, again by continuity.
Finally, Lemma \ref{VMlem1} implies that 
$c_1\mapsto\V_a[c_1,c_2,\ldots,c_a]$ is decreasing for
$0<c_1\leq\alpha$ and for $c_1\geq\beta$, and the proof is complete.
\end{proof}

The following lemma gives the exact value of $\V_a[c_1,\ldots,c_a]$ when $c_1=\cdots=c_a$.

\begin{lem}\label{WVlem}
For $a\geq1$ and $0<c\leq1$,
\begin{align*}
\tV_{a,c}:=\V_a[c,\ldots,c]=\begin{cases}
{\displaystyle \sqrt{\frac{c^{-2}(a^2-c^{-2})^{a-1}}{a^{a}(a-1)^{a-1}}}}
&\text{if }c>a^{-1/2},
\\[15pt]
1&\text{if }c\leq a^{-1/2}.
\end{cases}
\end{align*}
\end{lem}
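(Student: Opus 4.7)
The case $c \leq a^{-1/2}$ is immediate from Lemma \ref{VMlem1}, since with $c_j = c$ one has $c_1^2 + \cdots + c_a^2 = ac^2 \leq 1$. So I focus on the remaining case $a^{-1/2} < c \leq 1$ (which forces $a \geq 2$). My plan is to recast the supremum defining $\tV_{a,c}$ as an optimization over Gram matrices and then exploit the full $S_a$-symmetry of the resulting problem together with the log-concavity of $\det$ on the positive semi-definite cone.

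Every PSD $a \times a$ matrix $G$ arises as the Gram matrix $G = (\vecx_i \cdot \vecx_j)_{i,j}$ of some $\vecx_1, \ldots, \vecx_a \in \R^a$, and then $[\vecx_1, \ldots, \vecx_a]^2 = \det G$. The constraints $|\vecx_j| \leq 1$ and $\bigl|c\sum_j \vecx_j\bigr| \leq 1$ translate into the two linear inequalities $G_{jj} \leq 1$ (for each $j$) and $\sum_{j,k} G_{jk} \leq c^{-2}$. Hence $\tV_{a,c}^2$ equals the maximum of $\det G$ over the compact, convex set of PSD matrices satisfying these conditions; by compactness the maximum is attained at some $G^*$.

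Both the feasible set and the objective $\det$ are invariant under the $S_a$-action $G \mapsto P_\pi G P_\pi^T$. The Minkowski form of log-concavity, $\det\bigl(\tfrac{1}{N}\sum_{i=1}^N A_i\bigr) \geq \prod_{i=1}^N \det(A_i)^{1/N}$ for PSD matrices $A_i$, is a well-defined inequality between non-negative numbers even when some $A_i$ is singular; applied to the $S_a$-orbit of $G^*$ it shows that the symmetrized matrix $G' := \tfrac{1}{a!}\sum_{\pi} P_\pi G^* P_\pi^T$ is also optimal. Such a $G'$ necessarily takes the form $G'_{jj} = d$ and $G'_{jk} = \rho$ for $j \neq k$, with eigenvalues $d - \rho$ (multiplicity $a-1$) and $d + (a-1)\rho$ (multiplicity $1$); thus $\det G' = (d-\rho)^{a-1}(d + (a-1)\rho)$.

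Finally, writing $u = d + (a-1)\rho$ and $v = d - \rho$ (both non-negative by PSD), the task becomes to maximize $v^{a-1}u$ subject to $u \leq c^{-2}/a$ and $u + (a-1)v \leq a$ (coming from $\sum G_{jk} \leq c^{-2}$ and $d \leq 1$). Since $v^{a-1}u$ is non-decreasing in both $u$ and $v$, and the two binding linear constraints form the unique ``upper-right'' vertex of the feasible polygon, the maximum is attained there: $u = c^{-2}/a$ and $v = (a^2 - c^{-2})/(a(a-1))$, both positive since $c > a^{-1/2} \geq a^{-1}$ (the latter inequality holding for $a \geq 1$). Substituting back into $\det G' = v^{a-1}u$ yields precisely $c^{-2}(a^2-c^{-2})^{a-1}/(a^a(a-1)^{a-1})$, and $\tV_{a,c}$ is the square root. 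The main delicate point is the symmetrization step and specifically ensuring that log-concavity can be used even when $G^*$ is singular; this is handled by keeping the Minkowski inequality in its multiplicative form throughout, which remains valid on the full PSD cone.
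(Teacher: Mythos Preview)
Your approach is correct in outline and genuinely different from the paper's. The paper establishes Lemma~\ref{WVlem} by invoking the technical structure lemma (Lemma~\ref{texistsLEM}) to force all the projection lengths $\delta_j$ to coincide, hence all inner products $\vecx_i\cdot\vecx_j$ ($i\neq j$) to equal a common value $s$, and then solves for $s$ from $|\vecd|=1$ before computing the determinant directly. Your route---recasting the problem as maximizing $\det G$ over a convex, $S_a$-invariant set of PSD Gram matrices, symmetrizing via Minkowski's inequality, and reducing to a two-parameter optimization in $(u,v)$---is cleaner in that it bypasses Lemma~\ref{texistsLEM} entirely and makes the role of convexity explicit. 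The paper's route, on the other hand, yields more structural information about the extremal configuration (e.g.\ that every $|\vecx_j|=1$ and $|\vecd|=1$), which is of independent interest there.

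There is one genuine gap in your final step. You assert that because $v^{a-1}u$ is non-decreasing in each variable, the maximum over the polygon $\{u,v\geq0,\ u\leq c^{-2}/a,\ u+(a-1)v\leq a\}$ must occur at the vertex where both upper constraints bind. This implication is false in general: coordinate-wise monotonicity only forces the maximum onto the Pareto boundary, not to a vertex. (For instance, $xy$ on $\{x\leq2,\ x+y\leq3,\ x,y\geq0\}$ is maximized at $(3/2,3/2)$, not at the corner $(2,1)$.) In your polygon the point $(0,a/(a-1))$ has $v>v_0:=(a^2-c^{-2})/(a(a-1))$, so the corner does not Pareto-dominate the whole region. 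The conclusion is nevertheless correct: since $c>a^{-1/2}$ forces $v_0>1$, on the slanted edge $u=a-(a-1)v$ (where $v\geq v_0>1$) the objective $(a-(a-1)v)v^{a-1}$ has derivative $a(a-1)v^{a-2}(1-v)<0$, so it is maximized at the corner $v=v_0$; and on the vertical edge $u=c^{-2}/a$ the objective is increasing in $v$. You should replace the vertex argument with this short computation.
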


\begin{proof}
The case $c\leq a^{-1/2}$ follows from Lemma \ref{VMlem1};
hence we now assume $c>a^{-1/2}$ (and $a\geq2$).
Then Lemma \ref{texistsLEM} applies.
Let $\vecx_1,\ldots,\vecx_a$ and $\delta_1,\ldots,\delta_a$ be as in the statement of that lemma.
Set $\gamma:=1-c^2\in[0,1)$.
One verifies by differentiation that 
$\frac{\delta-\delta^3}{(\delta^2-\gamma)^{1/2}}$ is a strictly decreasing function of $\delta$ in the
interval $\sqrt\gamma<\delta\leq1$;
hence, a fortiori, $\delta^2-\frac{\delta-\delta^3}{(\delta^2-\gamma)^{1/2}}$ is strictly increasing in that interval.
Hence Lemma \ref{texistsLEM}\,(ii) implies $\delta_1=\cdots=\delta_a>\sqrt\gamma$.
Using this in the formula \eqref{texistsLEMpfnew31} (wherein $\tau_i=(\delta_i^2+c^2-1)^{1/2}$),
it follows that the scalar product $\vecx_i\cdot\vecx_j$ takes one and the same value for all choices of $i\neq j$.
Call this value $s$.
It was also seen in the proof of Lemma \ref{texistsLEM} that $|\vecx_j|=1$ for all $j$, and $|\sum_{j=1}^a c\vecx_j|=1$.
Squaring and expanding the last relation gives $c^2(a+a(a-1)s)=1$. We have thus proved
\begin{align*}
\vecx_i\cdot\vecx_j=s=\frac{c^{-2}-a}{a(a-1)},\qquad\text{for all }\: i\neq j.
\end{align*}
Hence
\begin{align*}
\V_a[c,\ldots,c]=[\vecx_1,\ldots,\vecx_a]=\sqrt{D_{a,s}},
\qquad\text{with }\:
D_{a,s}:=\left|\begin{matrix}1&s &\cdots&s
\\
s&1&\cdots&s
\\
\vdots& & \ddots & \vdots
\\
s&s&\cdots&1
\end{matrix}\right|.
\end{align*}
Subtracting $s$ times the first row from each of the other rows, we get
\begin{align*}
D_{a,s}=
\left|\begin{matrix}1-s^2&s-s^2 &\cdots&s-s^2
\\
s-s^2&1-s^2&\cdots&s-s^2
\\
\vdots& & \ddots & \vdots
\\
s-s^2&s-s^2&\cdots&1-s^2
\end{matrix}\right|   
=(1-s^2)^{a-1}D_{a-1,s/(1+s)},
\end{align*}
and from this one proves by induction that
$D_{a,s}=(as-s+1)(1-s)^{a-1}$.
This gives the formula stated in the lemma.
\end{proof}

The case $c=1$ will turn out to be of special importance, and we set
\begin{align}\label{TVdef}
\tV_a:=\tV_{a,1}=\sqrt{\frac{(a+1)^{a-1}}{a^a}}\quad (a\geq1);\qquad
\tV_0:=1.
\end{align}

\subsection{Proof of Theorem \ref{OPTIMALFNGROWTHTHM}}
For $k=2$, the statement of Theorem \ref{OPTIMALFNGROWTHTHM} follows from Remark \ref{VARIANCEremark}.
Hence, from now on we fix $k$ to be an integer $\geq3$.
We also fix $c$ and $f$ as in Theorem \ref{OPTIMALFNGROWTHTHM};
thus $0<c<c_k$, $\lim_{n\to\infty}f(n)=\infty$ and $f(n)=O(e^{c n})$.

The following lemma takes care of all except finitely many terms in \eqref{Zmoments};
it is proved using the same bounds as in Rogers, \cite[pp.\ 245--246]{rogers2},
which were also used in the proof of Proposition \ref{MOMENTPROP} above.
\begin{lem}\label{ALLOTHERMATRICESlem}
The total contribution to \eqref{Zmoments} from all $D$ which satisfy 
$\max\{|d_{ij}|\}\geq \tV_{k-1}^{-1}$ 
(the maximum being taken over all entries of $D$)
tends to zero as $n\to\infty$.
\end{lem}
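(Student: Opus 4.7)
The plan is to combine Lemmas \ref{BASICVBOUNDlem}, \ref{BASICJBOUNDSlem}, and \ref{BASICVBOUNDlem2} into a uniform per-matrix bound on each summand of the right-hand side of \eqref{Zmoments}, and then analyze the sum over bad matrices $D$ (those with $\max\{|d_{ij}|\}\geq\tV_{k-1}^{-1}$) by organizing according to the row count $m=m(D)$. Combining the three lemmas, the contribution of a $\langle k,q\rangle$-admissible matrix $D$ of size $m\times k$ (with at least two non-zero entries per row) to $\mathbb E(Z_n^{\,k})$ is bounded above, up to a $k$-dependent constant, by
\begin{align*}
n^{O_k(1)}\,f(n)^{m-k/2}\,\prod_{j=1}^{k-m}\biggl(\frac{g_j}{q}\biggr)^n\,\V_{a_j}\bigl[(|d_{i,\mu_j'}|/q)_{i\in A_j}\bigr]^{n-a_j},
\end{align*}
with the notation of Section~\ref{AUXLEMMASsec}; we are free to order the $\mu$-columns to our advantage and will put the column containing a large entry first whenever applicable.

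The critical regime is $m=k-1$: here $r=1$ and $a_1=k-1$ are forced, and the worst-case value $\V_{k-1}[1,\ldots,1]=\tV_{k-1}$ is attained exactly on the (good) matrices \eqref{WORSTMATRICES}, which give a leading contribution of order $n^{O(1)}f(n)^{k/2-1}\tV_{k-1}^{n-k+1}=o(1)$ by the hypothesis $c<c_k$. For a bad matrix with $m=k-1$, since $r=1$ the GCD condition on $D$ forces $g_1=1$, so $(g_1/q)^n=q^{-n}$; Lemma \ref{VMlem1} applied to the large entry (which is integer-valued and hence of absolute value at least $M:=\lceil\tV_{k-1}^{-1}\rceil$) then yields either $\V_{k-1}[\vec c_1]\leq q/|d_{i_0,\mu_1}|$ if the large entry sits in $\mu_1$, or $q\geq M$ otherwise. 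In either sub-case the combined factor $(g_1/q)^n\V_{k-1}[\vec c_1]^{n-k+1}$ carries a geometric decay at least of order $q^{1-k}\,M^{-(n-k+1)}$ (large entry in $\mu_1$) or $q^{-n}$ (large entry in a $\nu$-column); summing over bad matrices of this shape---first over the integer entries of the $\mu$-column (whose sup-norm is $\geq M$, and whose number of tuples with sup-norm $M^*$ is $O_k((M^*)^{k-2})$) and then over $q\geq 1$---yields a convergent multiple series bounded by $n^{O_k(1)}f(n)^{k/2-1}M^{-n}$ times a polynomial in $M$. This is $o(1)$ because $\log M\geq -\log\tV_{k-1}=c_k(k-2)/2>c(k-2)/2$.

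The remaining regime $m\leq k-2$ is handled by a Rogers-style estimate as in the proof of Proposition~\ref{MOMENTPROP}: combining monotonicity (Lemma \ref{VaMONOTONICITYlem}) and the explicit value $\tV_a$ (Lemma \ref{WVlem}) to bound each $\V_{a_j}$-factor uniformly (together with $(g_j/q)^n\leq q^{-n}$ in the case $q>1$), the total contribution of this regime is $\ll f(n)^{(k-4)/2}\tV_{k-2}^n=o(1)$ for $c<c_k$, the inequality $c(k-4)/2<-\log\tV_{k-2}$ being readily verified from the definitions \eqref{CKDEF} and Lemma~\ref{WVlem}. The main obstacle is quantifying the strict improvement in the critical regime $m=k-1$ uniformly and summably over all $q$ and all $\mu$-column entry patterns; this is exactly where the integer nature of the entries combines with the $\V$-estimates of Section~\ref{AUXLEMMASsec}, in particular Lemmas \ref{VMlem1}, \ref{VaMONOTONICITYlem}, and \ref{WVlem}, to close the argument.
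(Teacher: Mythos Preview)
Your plan diverges from the paper's in a basic way: the paper does \emph{not} use the $\V_a$-machinery of Section~\ref{AUXLEMMASsec} for this lemma at all. It invokes instead the classical Rogers estimate (cf.\ \cite[Remark~1]{poisson} and \cite[(72)]{rogers2})
\[
\Bigl(\frac{e_1}{q}\cdots\frac{e_m}{q}\Bigr)^n\int_{(\R^n)^m}\prod_{j=1}^k\chi_n\Bigl(\sum_i\tfrac{d_{ij}}{q}\vecx_i\Bigr)\,d\vecx_1\cdots d\vecx_m \;\leq\; f(n)^m\,\Delta^{-n},\qquad \Delta:=\max_{i,j}|d_{ij}|,
\]
then observes that the number of $\langle k,q\rangle$-admissible matrices of size $m\times k$ with a given value of $\Delta$ is at most $\binom{k-1}{m-1}(3\Delta)^{m(k-m)}$, and that necessarily $\Delta\geq q$. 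Summing $\Delta^{m(k-m)-n}$ over $\Delta\geq v_k:=\lceil\tV_{k-1}^{-1}\rceil$ and over $q$ gives a total $\ll_k f(n)^{m-k/2}v_k^{-n}$, which is $o(1)$ since $c(m-\tfrac{k}{2})-\log v_k<c_k(\tfrac{k}{2}-1)+\log\tV_{k-1}=0$. This handles all $m$ uniformly in a few lines; the $\V_a$-machinery is reserved for the \emph{finitely many} matrices left over after this lemma, where the sharp threshold $c_k$ must be extracted.

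Your route works for $m=k-1$ (there $r=1$, so $A_1=\{1,\dots,k-1\}$ and every $\mu$-column entry enters the $\V_{k-1}$-bound), but for $m\leq k-2$ there is a real gap. The bound coming from Lemma~\ref{BASICVBOUNDlem} depends only on the entries $d_{i,\mu_j'}$ with $i\in A_j$; entries with $i\in\oA_j\setminus A_j$ are invisible. When $r\geq2$ there are, for any fixed ordering, infinitely many bad matrices differing only in such invisible entries yet receiving the identical upper bound, so the sum diverges. An adaptive choice of ordering per matrix might be made to work, but that is a substantial argument you have not given, and in any case it does not produce the claimed factor $\tV_{k-2}^n$: for instance a matrix with $q\geq2$ and each $\mu$-column containing a single nonzero entry has all $a_j\leq1$, hence $\prod_j\tV_{a_j}=1$, and the decay must come from $q^{-n}$ rather than from any $\tV$-factor. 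The clean way to obtain decay that is summable in \emph{all} entries simultaneously is precisely the Rogers $\Delta^{-n}$ bound above.
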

\begin{remark}
If $k\leq10$ then $\tV_{k-1}^{-1}<2$, so that Lemma \ref{ALLOTHERMATRICESlem} 
in fact takes care of all $D$ except those which have $q=1$ and all entries $d_{ij}\in\{-1,0,1\}$.
\end{remark}
\begin{proof}
%
%
We fix $m\in\{1,\ldots,k-1\}$, and consider the contribution from all $D$ as in the lemma with the further
requirement that 
$D$ is of size $m\times k$.
Set $\Delta:=\max\{|d_{ij}|\}$. Then, by \cite[Remark 1]{poisson} and \cite[(72)]{rogers2},
\begin{align*}
&
\Big(\frac{e_1}{q}\cdots\frac{e_m}{q}\Big)^n
\int_{\R^n}\cdots\int_{\R^n}\prod_{j=1}^k\chi_n\Big(\sum_{i=1}^m \frac{d_{ij}}q\vecx_i\Big)\,d\vecx_1\ldots d\vecx_m
\leq f(n)^{m}\Delta^{-n}.    
\end{align*}
Note that the number of $\langle k,q\rangle$-admissible matrices of size $m\times k$ and with 
given values of $q$ and $\Delta$,
is less than $\binom{k-1}{m-1}(3\Delta)^{m(k-m)}$, and there are \textit{no} such matrices with $\Delta<q$.
Hence, if we let $v_k$ be the smallest integer $\geq\tV_{k-1}^{-1}$ (thus $v_k\geq2$),
and assume that $n\geq m(k-m)+3$,
then the total contribution to \eqref{Zmoments} from all $D$ with $q\geq\tV_{k-1}^{-1}$ is
\begin{align*}
\leq  \binom{k-1}{m-1}f(n)^{m-k/2}\sum_{q=v_k}^\infty\sum_{\Delta\geq q}(3\Delta)^{m(k-m)}\Delta^{-n}
\ll_k f(n)^{m-k/2}v_k^{-n}.
\end{align*}
Similarly, assuming $n\geq m(k-m)+2$, the total contribution to \eqref{Zmoments} from all $D$ satisfying $q<\tV_{k-1}^{-1}$ 
and $\Delta\geq\tV_{k-1}^{-1}$ (viz., $q<v_k$ and $\Delta\geq v_k$) is
\begin{align*}
\leq  \binom{k-1}{m-1}f(n)^{m-k/2}\sum_{q=1}^{v_k-1}\sum_{\Delta=v_k}^\infty(3\Delta)^{m(k-m)}\Delta^{-n}\ll_k f(n)^{m-k/2}v_k^{-n}.
\end{align*}
Finally, using $\lim_{n\to\infty}f(n)=\infty$ and $f(n)=O(e^{cn})$ with $0<c<c_k$, 
the desired convergence is seen to follow from the fact that
\begin{align*}
c\Bigl(m-\frac k2\Bigr)-\log v_k<c_k\Bigl(\frac k2-1\Bigr)+\log\tV_{k-1}=0,
\end{align*}
cf.\ \eqref{CKDEF} and \eqref{TVdef}.
\end{proof}

In the next three lemmas,
we let $D$ be any fixed $\langle k,q\rangle$-admissible matrix appearing in the sum in \eqref{Zmoments}.
(We could assume that $D$ does not satisfy the condition in Lemma \ref{ALLOTHERMATRICESlem}, but we won't need this.)
Let $m,r,(\mu_j')_{j=1}^r,(\oA_j)_{j=1}^r,(A_j)_{j=1}^r,(a_j)_{j=1}^r$ be as in Section \ref{AUXLEMMASsec}.

\begin{lem}\label{BASICBOUNDlem3}
If $n\geq\max(a_1,\ldots,a_r)$, then
\begin{align}\notag
\biggl(\frac{e_1}q\cdots\frac{e_m}q\biggr)^n
\int_{\R^n}\cdots\int_{\R^n}\prod_{j=1}^k\chi_n\Big(\sum_{i=1}^m \frac{d_{ij}}q\vecx_i\Big)\,d\vecx_1\ldots d\vecx_m
\hspace{100pt}
\\\label{BASICBOUNDlem3res}
\ll_m n^{m(m+3)/4}f(n)^m \biggl(\prod_{j=1}^r\tV_{a_j}\biggr)^n.
\end{align}
\end{lem}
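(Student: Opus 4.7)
The plan is to chain together Lemmas \ref{BASICVBOUNDlem}, \ref{BASICJBOUNDSlem}, and \ref{BASICVBOUNDlem2} to reduce \eqref{BASICBOUNDlem3res} to a one-variable maximization of $\mu^{n}\tV_{a,\mu}^{n-a}$ over $\mu\in(0,1]$. First I would apply Lemma \ref{BASICVBOUNDlem} to bound the multiple integral by $V_n^{-m}f(n)^m\prod_{j=1}^r J^{(n)}_{a_j}\bigl[(|d_{i,\mu_j'}|/q)_{i\in A_j}\bigr]$, and then apply Lemma \ref{BASICJBOUNDSlem} to each factor $J^{(n)}_{a_j}$. The exponent accounting is clean: $\sum_j a_j = m$ annihilates the powers of $V_n$, while $\sum_j a_j(a_j+3)/4\leq m(m+3)/4$ (since $\sum_j a_j^2\leq (\sum_j a_j)^2 = m^2$) gives exactly the exponent of $n$ demanded by \eqref{BASICBOUNDlem3res}. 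At this stage the multiple integral is dominated, up to an implicit constant depending only on $m$, by
\begin{align*}
n^{m(m+3)/4}\,f(n)^m\,\prod_{j=1}^r \V_{a_j}\bigl[(|d_{i,\mu_j'}|/q)_{i\in A_j}\bigr]^{n-a_j}.
\end{align*}

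Next I would bring in the arithmetic factor $(e_1/q\cdots e_m/q)^n$. By Lemma \ref{BASICVBOUNDlem2} it is bounded by $\prod_j (g_j/q)^n$ where $g_j = \gcd(\{q\}\cup\{d_{i,\mu_j'}\col i\in A_j\})$. Since every non-zero $d_{i,\mu_j'}$ with $i\in A_j$ is divisible by $g_j$, we have $|d_{i,\mu_j'}|/q\geq g_j/q$; hence Lemma \ref{VaMONOTONICITYlem} yields $\V_{a_j}\bigl[(|d_{i,\mu_j'}|/q)_{i\in A_j}\bigr]\leq \tV_{a_j,\,g_j/q}$. Since $\tV_a^{-a}$ is an $a$-dependent constant (so $\tV_a^{\,n-a}\ll_a \tV_a^{\,n}$), establishing \eqref{BASICBOUNDlem3res} now reduces to proving, for every integer $a\geq 1$, every $n\geq a$, and every $\mu\in(0,1]$, the key one-variable inequality
\begin{align*}
\mu^{n}\,\tV_{a,\mu}^{\,n-a}\ \leq\ \tV_a^{\,n-a}.
\end{align*}

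For this key estimate I would invoke the explicit formula from Lemma \ref{WVlem}. When $\mu\leq a^{-1/2}$, $\tV_{a,\mu}=1$ and the left-hand side reduces to $\mu^n$, which is trivially increasing in $\mu$. When $\mu\in[a^{-1/2},1]$, the substitution $\lambda=\mu^{-1}\in[1,\sqrt a\,]$ and a direct computation show that $\log(\mu^{2n}\tV_{a,\mu}^{2(n-a)})$ equals $-2a\log\lambda + (n-a)(a-1)\log(a^2-\lambda^2)$ up to an $a$-dependent constant; both terms have manifestly non-positive derivative in $\lambda$ on $[1,\sqrt a\,)$ whenever $n\geq a$, so the quantity is non-increasing in $\lambda$, i.e.\ non-decreasing in $\mu$. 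The two formulas match at $\mu=a^{-1/2}$ (both equal $a^{-n/2}$), so gluing yields a function of $\mu$ that is non-decreasing on all of $(0,1]$, attaining its maximum $\tV_{a,1}^{\,n-a}=\tV_a^{\,n-a}$ at $\mu=1$, as required.

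The main obstacle, as the structure above makes clear, is the monotonicity of $\mu\mapsto \mu^n\tV_{a,\mu}^{n-a}$: this is the one spot where the explicit shape of $\tV_{a,\mu}$ in Lemma \ref{WVlem} enters essentially, and it requires a short but careful differentiation in $\lambda$. Everything else amounts to exponent bookkeeping across the three preceding lemmas together with one application of Lemma \ref{VaMONOTONICITYlem}.
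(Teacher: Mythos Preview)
Your proposal is correct and follows essentially the same route as the paper: chain Lemmas \ref{BASICVBOUNDlem}, \ref{BASICJBOUNDSlem}, \ref{BASICVBOUNDlem2}, use $\sum a_j=m$ and $\sum a_j^2\leq m^2$ for the exponent bookkeeping, then invoke Lemma \ref{VaMONOTONICITYlem} to reduce to bounding $\mu^n\tV_{a,\mu}^{n-a}$ by $\tV_a^n$ via the explicit formula in Lemma \ref{WVlem}. The only cosmetic difference is that the paper isolates the cleaner $n$-free monotonicity statement ``$c\mapsto c\,\tV_{a,c}$ is strictly increasing on $(0,1]$'' and then absorbs the leftover $\tV_{a,c}^{-a}\ll_m1$ factor, whereas you prove the $n$-dependent monotonicity of $\mu^n\tV_{a,\mu}^{n-a}$ directly; both arguments amount to the same short calculus check on the formula in Lemma \ref{WVlem}. (One small omission: you should note that factors with $a_j=0$ contribute exactly $1$, since then $J_0^{(n)}[\:]=1$ and $g_j=q$; the paper records this explicitly.)
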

\begin{proof}
By Lemmas \ref{BASICVBOUNDlem}, \ref{BASICJBOUNDSlem}, \ref{BASICVBOUNDlem2},
and using $\sum_{j=1}^r a_j=m$ (thus $\sum_{j=1}^r a_j^2\leq m^2$),
the left-hand side of \eqref{BASICBOUNDlem3res} is
\begin{align}\label{BASICBOUNDlem3pf1}
\ll_m n^{m(m+3)/4}f(n)^m 
\prod_{j=1}^r\Bigl(c_j^n\,\V_{a_j}\bigl[\bigl(|d_{i,\mu_j'}|/q\bigr)_{i\in A_j}\bigr]^{n-a_j}\Bigr),
\end{align}
where $c_j:=q^{-1}\gcd\bigl(\{q\}\cup\{d_{i,\mu_j'}\col i\in A_j\}\bigr)$, 
and we use the convention that $\V_0[\:]:=1$.
Using Lemma \ref{VaMONOTONICITYlem} and the fact that $|d_{i,\mu_j'}|\geq qc_j$ for all $i\in A_j$, we 
have $\V_{a_j}[(|d_{i,\mu_j'}|/q)_{i\in A_j}]\leq\tV_{a_j,c_j}$ for each $j$.
Note that $0<c_j\leq1$ by definition, and thus $\tV_{a_j,c_j}\geq\tV_{a_j}\gg_m1$.
Also, inspecting the formula in Lemma \ref{WVlem},
one notes that for any fixed $a\geq1$, $\,c\,\tV_{a,c}$ is a strictly increasing function of $c\in(0,1]$;
on the other hand, for each $j$ with $a_j=0$ we have $c_j=1$ and $\tV_{a_j,c_j}=1$.
Using these facts, we see that for each $j\in\{1,\ldots,r\}$,
\begin{align}\label{BASICBOUNDlem3pf2}
c_j^n\,\V_{a_j}\bigl[\bigl(|d_{i,\mu_j'}|/q\bigr)_{i\in A_j}\bigr]^{n-a_j}
\ll_m \bigl(c_j\,\tV_{a_j,c_j}\bigr)^n\leq\tV_{a_j}^n.
\end{align}
Now \eqref{BASICBOUNDlem3res} follows from \eqref{BASICBOUNDlem3pf1} and \eqref{BASICBOUNDlem3pf2}.
\end{proof}

\begin{lem}\label{OPTIMALFNGROWTHmainlem}
Let $D$ be as above, and assume furthermore that $D$ has some column containing more than one non-zero element.
Then the contribution from $D$ to \eqref{Zmoments} tends to zero as $n\to\infty$.
\end{lem}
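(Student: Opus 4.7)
The plan is to sharpen the bound from Lemma~\ref{BASICBOUNDlem3} using the hypothesis that some column of $D$ contains multiple non-zero entries. Dividing the bound of Lemma~\ref{BASICBOUNDlem3} by the normalization $(2f(n))^{k/2}$ from \eqref{Zmoments}, the contribution of $D$ is bounded, up to constants and polynomial-in-$n$ factors, by
\begin{equation*}
f(n)^{m-k/2}\prod_{j=1}^r\tV_{a_j}^n.
\end{equation*}
Since $f(n)=O(e^{cn})$ with $c<c_k$, it is enough to prove that $c(m-\tfrac{k}{2})+\sum_j\log\tV_{a_j}<0$. The permutation $\mu_1',\ldots,\mu_r'$ in the definition of the $a_j$'s is at our disposal, so we place the column of $D$ with multiple non-zero entries first, forcing $a_1\geq 2$ and hence $\log\tV_{a_1}\leq\log\tV_2<0$ by Lemma~\ref{VaMONOTONICITYlem}. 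The range $m\leq k/2$ is then immediate, since $f(n)^{m-k/2}$ is bounded and $\prod_j\tV_{a_j}^n\leq\tV_2^n\to 0$.

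In the range $m>k/2$, the key step is the combinatorial inequality
\begin{equation*}
c_k\bigl(m-\tfrac{k}{2}\bigr)+\sum_{j=1}^r\log\tV_{a_j}\leq 0,
\end{equation*}
which together with $c<c_k$ gives the strict inequality we want. Equality is achieved exactly at the boundary $m=k-1$, $r=1$, $a_1=k-1$; in that case the inequality is precisely the definition \eqref{CKDEF} of $c_k$ combined with the explicit formula $\tV_{k-1}^2=k^{k-2}/(k-1)^{k-1}$ (from Lemma~\ref{WVlem} and \eqref{TVdef}), which gives $c_k=-2\log\tV_{k-1}/(k-2)$.

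To handle the remaining range $k/2<m<k-1$, set $h(a):=-\log\tV_a$, so that $h(0)=h(1)=0$. From the explicit formula for $\tV_a$ one verifies that $h$ is strictly increasing and concave on $\{a\geq 1\}$. A direct exchange argument then shows that $\sum_j h(a_j)$ is minimized, subject to $\sum_j a_j=m$ with at most $r=k-m$ non-negative integer parts and at least one part $\geq 2$, by the partition whose largest part equals $\max(2,2m-k+1)$ and whose remaining parts are a string of $1$'s (filled out by $0$'s). The combinatorial inequality is thereby reduced to the monotonicity claim that the ratio $h(\ell+1)/\ell$ is non-increasing for $\ell\in\{1,2,\ldots\}$, with minimum attained at $\ell=k-2$ equal to $c_k/2$. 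This monotonicity is the main technical obstacle; it can be verified from the identity
\begin{equation*}
h(\ell+1)-h(\ell)=\tfrac{\ell}{2}\log\Bigl(1+\tfrac{1}{\ell(\ell+2)}\Bigr),
\end{equation*}
which shows that the increments of $h$ form a decreasing sequence, together with the elementary fact that the Cesaro mean of a decreasing sequence is itself decreasing.
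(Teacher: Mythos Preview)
Your proof is correct and follows essentially the same route as the paper's: fix the permutation so that $a_1\geq 2$, use concavity of $a\mapsto -\log\tV_a$ on $[1,\infty)$ for the exchange step to obtain $\prod_j\tV_{a_j}\leq\tV_{2m-k+1}$, and then deduce $-2\log\tV_{2m-k+1}/(2m-k)\geq c_k$ (the paper phrases this last step as ``$-2\log\tV_{x+1}$ is concave and vanishes at $x=0$, hence its secant slopes are decreasing''; your Ces\`aro-mean remark is exactly the discrete version of this). Two small fixes: the inequality $\tV_{a_1}\leq\tV_2$ for $a_1\geq 2$ follows from the explicit formula \eqref{TVdef}, not from Lemma~\ref{VaMONOTONICITYlem} (which concerns monotonicity in the arguments $c_j$, not in the index $a$); and your increment identity alone does not yet show the increments decrease---one still has to check that $\ell\mapsto\ell\log\bigl(1+\tfrac{1}{\ell(\ell+2)}\bigr)$ is decreasing, which the paper handles by differentiating the real-variable expression for $\log\tV_x$.
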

\begin{proof}
Recall that Lemma \ref{BASICBOUNDlem3} is valid for 
$\mu_1',\ldots,\mu_r'$ an arbitrary permutation of $\mu_1,\ldots,\mu_r$.
We now fix the choice of $\mu_1',\ldots,\mu_r'$ so that the number of non-zero elements in column number $\mu_1'$ is
as large as possible.
Then $a_1=\#A_1=\#\oA_1\geq\#\oA_j\geq a_j$ for all $j\in\{1,\ldots,r\}$, and $a_1\geq2$ by our 
assumption on $D$.

Now note that $\log(\tV_x)$, which we take to be defined for arbitrary real $x\geq1$
through the formula \eqref{TVdef}, is a strictly decreasing and strictly convex function of $x\geq1$.
This is easily verified by differentiation.
It follows that for any $j\geq2$, if $a_j\geq2$ (and thus $a_1\geq a_j\geq2$), the product
$\prod_{j=1}^r\tV_{a_j}$ \textit{increases} if we simultaneously replace 
$a_1$ by $a_1+1$ and $a_j$ by $a_j-1$.
Repeating this operation for as long as possible, and recalling $\tV_1=\tV_0=1$, we conclude that
$\prod_{j=1}^r\tV_{a_j}\leq\tV_a$
for some integer $a\geq a_1\geq2$ satisfying $a+r-1\geq m$, i.e.\ $a\geq 2m-k+1$.
Hence, applying Lemma \ref{BASICBOUNDlem3} and dividing through  by $f(n)^{k/2}$,
we conclude that the contribution from $D$ to \eqref{Zmoments} is
\begin{align*}
\ll_m n^{m(m+3)/4}f(n)^{m-k/2} \tV_a^n.
\end{align*}
If $m\leq k/2$, then this bound obviously tends to zero as $n\to\infty$,
since $\tV_a<1$ and $f(n)\to\infty$;
hence from now on we assume that $m>k/2$.
Then, using the assumption $f(n)=O(e^{cn})$ 
and the fact that $\tV_a$ is a decreasing function of $a$,
we see that our term is
$\ll_m n^{m(m+3)/4} \exp\bigl((c(m-k/2)+\log\tV_{2m-k+1})n\bigr)$,
and hence to complete the proof of the lemma it suffices to prove that
\begin{align}\label{OPTIMALFNGROWTHmainlempf1}
c<\frac{-2\log\tV_{2m-k+1}}{2m-k}.
\end{align}
However, by what we noted above, $-2\log\tV_{x+1}$ is a strictly concave function of $x\geq0$, taking the value 
$0$ at $x=0$.
Also $2m-k\leq k-2$.
Hence
\begin{align*}
\frac{-2\log\tV_{2m-k+1}}{2m-k}\geq\frac{-2\log\tV_{k-1}}{k-2}=c_k
\end{align*}
(cf.\ \eqref{CKDEF} and \eqref{TVdef}),
and so \eqref{OPTIMALFNGROWTHmainlempf1} follows from the assumption that $c<c_k$.
\end{proof}

The matrices $D$ not covered by Lemma \ref{OPTIMALFNGROWTHmainlem} are very easy to handle:
\begin{lem}\label{OPTIMALFNGROWTHtrivlem}
Let $D$ be a matrix appearing in \eqref{Zmoments} with exactly one non-zero element in each column.
Then either $D$ is accounted for in $M_{k,n}$
(cf.\ \eqref{Mkndef}) or else the contribution from $D$ to \eqref{Zmoments} tends to zero 
as $n\to\infty$.
\end{lem}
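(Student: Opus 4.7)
The plan is to handle $D$ by cases. If $D$ happens to be $\langle k,1\rangle$-admissible with all entries in $\{0,\pm1\}$, then together with the standing hypothesis (from Theorem \ref{newrogers}) that every row of $D$ has at least two non-zero entries, and the lemma's hypothesis that every column has exactly one non-zero entry, $D$ is precisely one of the matrices summed in the definition \eqref{Mkndef} of $M_{k,n}$, and we are done. The only remaining case is when either $q\geq 2$, or $q=1$ with some $|d_{ij}|\geq 2$, and here the task is to show the corresponding term of \eqref{Zmoments} tends to zero.

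The first structural input I would record is the inequality $m\leq k/2$: by counting non-zero entries once by columns and once by rows, the total number equals exactly $k$ and is at least $2m$. This will be what ultimately allows the exponentially growing factor $f(n)^m$ from the volume estimate to be absorbed.

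Next, because each column of $D$ has a \emph{unique} non-zero entry, the integrand factors cleanly over rows. Writing $i(j)$ for the unique row in which column $j$ has its non-zero entry, and $T_i:=\{j\col i(j)=i\}\supseteq\{\nu_i\}$, the integral in \eqref{Zmoments} splits as
\begin{align*}
\int_{(\R^n)^m}\prod_{j=1}^{k}\chi_n\Bigl(\sum_i\frac{d_{ij}}{q}\vecx_i\Bigr)d\vecx_1\cdots d\vecx_m
=\prod_{i=1}^{m}\vol\Bigl(\bigcap_{j\in T_i}\frac{q}{|d_{ij}|}S_n\Bigr),
\end{align*}
where we used $S_n=-S_n$ to replace signs. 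Bounding each intersection by the smallest set in it yields
\begin{align*}
\vol\Bigl(\bigcap_{j\in T_i}\frac{q}{|d_{ij}|}S_n\Bigr)\leq\Bigl(\frac{q}{M_i}\Bigr)^{n}f(n),\qquad M_i:=\max_{j\in T_i}|d_{ij}|\geq q,
\end{align*}
since $\nu_i\in T_i$ with $|d_{i\nu_i}|=q$. Combining with the inequality $(e_1/q\cdots e_m/q)^n\leq q^{-n}$ (which follows from $e_1=1$, as noted in the Remark after Theorem \ref{newrogers}), and dividing by $(2f(n))^{k/2}$, the contribution of $D$ to \eqref{Zmoments} is bounded by
\begin{align*}
2^{-k/2}\,f(n)^{m-k/2}\,\frac{q^{(m-1)n}}{\prod_{i=1}^{m}M_i^{n}}.
\end{align*}

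Finally I would close the argument by examining the two sub-cases. If $q\geq 2$, each $M_i\geq q$ gives $q^{(m-1)n}/\prod_i M_i^{n}\leq q^{-n}\leq 2^{-n}$. If instead $q=1$ with some $|d_{ij}|\geq 2$, then $\prod_i M_i\geq 2$ while $q^{(m-1)n}=1$, so the same quotient is again $\leq 2^{-n}$. Since $m\leq k/2$ and $f(n)\to\infty$, the factor $f(n)^{m-k/2}$ is eventually $\leq 1$, so in both sub-cases the full contribution is $O(2^{-n})\to 0$. The only mildly subtle step is the combinatorial bookkeeping for the row-factorization of the integrand; the rest is a straightforward chain of volume estimates together with the standard bound on $(e_1/q\cdots e_m/q)^n$.
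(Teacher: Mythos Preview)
Your proof is correct and follows essentially the same approach as the paper's: both arguments factor the integral over rows using the column hypothesis, bound each factor via the maximum entry $M_i=\Delta_i$ in row $i$, combine with $(e_1/q\cdots e_m/q)^n\leq q^{-n}$, observe $m\leq k/2$ from the row/column count, and finish by the dichotomy $q\geq 2$ versus $q=1$ with some $|d_{ij}|\geq 2$. The paper cites \cite[Remark 1]{poisson} for the row-factorization step where you carry it out explicitly, but the resulting bound $q^{-n}f(n)^{m-k/2}\prod_i(q/M_i)^n$ and the endgame are identical.
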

\begin{proof}
Let $\Delta_i:=\max(|d_{i1}|,|d_{i2}|,\ldots,|d_{ik}|)$ for $i=1,\ldots,m$.
Then, using \cite[Remark 1]{poisson}, we obtain
\begin{align}\notag
\frac1{(2f(n))^{k/2}}\,\Big(\frac{e_1}{q}\cdots\frac{e_m}{q}\Big)^n
\int_{\R^n}\cdots\int_{\R^n}\prod_{j=1}^k\chi_n\Big(\sum_{i=1}^m \frac{d_{ij}}q\vecx_i\Big)\,d\vecx_1\ldots d\vecx_m
\hspace{50pt}
\\\label{OPTIMALFNGROWTHtrivlemPF1}
\leq 
f(n)^{-k/2}q^{-n}\prod_{i=1}^m\biggl(\int_{\R^n}\chi_n\Bigl(\frac{\Delta_i}{q}\vecx_i\Bigr)\,d\vecx_i\biggr)
=q^{-n}f(n)^{m-k/2}\prod_{i=1}^m\Bigl(\frac q{\Delta_i}\Bigr)^n.
\end{align}
Now note that $k\geq2m$, since $D$ has exactly one non-zero element in each column
but at least two non-zero entries in each row.
Hence, if we keep $n$ so large that $f(n)\geq1$, we have $f(n)^{m-k/2}\leq1$.
Note also that $\Delta_i\geq q$ for each $i$, since $D$ is $\langle k,q\rangle$-admissible.
Furthermore, assuming that $D$ is \textit{not} accounted for in $M_{k,n}$, we
have either $q\geq2$ or $q=1$ at the same time as $\Delta_i>1$ for some $i$.
Hence the bound in \eqref{OPTIMALFNGROWTHtrivlemPF1} is $\leq 2^{-n}$, and the lemma is proved.
\end{proof}


%
\begin{proof}[Proof of Theorem \ref{OPTIMALFNGROWTHTHM}]
Taken together, Lemma \ref{ALLOTHERMATRICESlem} and Lemmas \ref{BASICBOUNDlem3}--\ref{OPTIMALFNGROWTHtrivlem} show that the total contribution from all $D$ in \eqref{Zmoments} which are not accounted for in $M_{k,n}$ tends to zero as $n\to\infty$.
On the other hand, the treatment of $M_{k,n}$ in the proof of Proposition \ref{MOMENTPROP}
applies verbatim in the present situation with a more general function $f$, 
and shows that $\lim_{n\to\infty}(2f(n))^{-k/2}M_{k,n}$ exists and equals $0$ for $k$ odd and 
$(k-1)!!$ for $k$ even. Hence \eqref{OPTIMALFNGROWTHTHMres} holds.


We now turn to the second statement of Theorem \ref{OPTIMALFNGROWTHTHM}.
Thus assume that $k\geq3$ and $c>c_k$; let $f:\Z^+\to\R^+$ be a function satisfying $f(n)\gg e^{cn}$ as $n\to\infty$,
and consider \eqref{Zmoments} with $\chi_n$ being the characteristic function of the closed ball of volume $f(n)$ centered at the origin.
Then the contribution from any matrix $D$ as in \eqref{WORSTMATRICES} to the sum in \eqref{Zmoments} equals
\begin{align}\label{OPTIMALFNGROWTHTHMpart2pf1}
2^{-k/2}f(n)^{\frac k2-1}V_n^{1-k}J_{k-1}^{(n)}[1,\ldots,1].
\end{align}
Now $c>c_k$ implies that $e^{(1-\frac k2)c}<\tV_{k-1}$
(cf.\ \eqref{CKDEF} and \eqref{TVdef});
hence by the second part of Lemma \ref{BASICJBOUNDSlem},
the expression in \eqref{OPTIMALFNGROWTHTHMpart2pf1} tends to $\infty$ as $n\to\infty$.
This completes the proof of Theorem \ref{OPTIMALFNGROWTHTHM}.
\end{proof} 


\end{document}